\documentclass[11pt]{article}
\usepackage[utf8]{inputenc}
\usepackage[colorlinks=true, linkcolor=blue, urlcolor=blue, citecolor=blue]{hyperref}
\usepackage[
a4paper,
textwidth=17cm,
textheight=24cm,
top=1.8cm,
bottom=2.3cm,
centering
]{geometry}

\usepackage{amsfonts}
\usepackage{amsfonts,latexsym}
\usepackage{amsmath}
\usepackage{amsthm}
\usepackage{amssymb}
\usepackage{mathrsfs}
\usepackage{xcolor}
\usepackage{cite}
\usepackage{verbatim}
\usepackage{enumitem}
\usepackage{amsmath, amssymb, booktabs} 
\usepackage{array}  
\usepackage{xcolor}  
\usepackage{hyperref}  
\usepackage{xcolor}  
\usepackage{hyperref}  
\hypersetup{
	allcolors = blue  
}

\newtheorem{theorem}{\indent Theorem}[section]
\numberwithin{theorem}{section}  
\newtheorem{proposition}[theorem]{\indent Proposition}
\newtheorem{definition}[theorem]{\indent Definition}
\newtheorem{lemma}[theorem]{\indent Lemma}
\newtheorem{remark}[theorem]{\indent Remark}

\newtheorem{corollary}[theorem]{\indent Corollary}

\numberwithin{equation}{section}

\begin{document}
\title{Lagrangian chaos for the 2D Navier-Stokes equations driven by mildly degenerate noise}
\author{ Dengdi Chen$^{1}$, Yan Zheng \thanks{\emph{E-mail addresses:} yanzhengyl@163.com} \\
}
\date{}

\maketitle
\begin{abstract}

We consider the 2D incompressible Navier–Stokes equations driven by mildly degenerate noise that acts only on finitely many low Fourier modes, a setting that models large-scale stirring. For this system, we prove that the top Lyapunov exponent of the associated Lagrangian flow is strictly positive, thereby establishing Lagrangian chaos. This result is obtained within the framework of random dynamical systems, combining the multiplicative ergodic theorem with the refined Furstenberg criterion of \cite{NS}. Unlike the method in \cite{NS} for handling highly degenerate noise, this paper develops a unified analytical framework that combines low-mode control, finite-dimensional Malliavin calculus, and dissipation in the high modes. By constructing a finite-dimensional partial Malliavin matrix and proving its non-degeneracy, we avoid the technical complexity of performing Malliavin analysis on the full phase space and simultaneously overcome the degeneracy introduced by the manifold variables. Furthermore, the mildly degenerate forcing gives controllability in the low-frequency subsystem. In the manifold directions, only first-order Lie brackets are needed, which substantially simplifies the Lie-brackets computations.\\ 
\textbf{Keywords:} Lagrangian chaos; mildly degenerate noise; Furstenberg criterion; stochastic Navier-Stokes equations.\\
\textbf{Mathematics Subject Classifications:} 37A25; 37H15; 37L30; 35R60
\end{abstract}
\section{Introduction}
Chaotic behavior in dynamical systems has been studied extensively in recent decades, with important applications to fluid dynamics and turbulence \cite{V2-BJPV98}. Chaos is generally associated with sensitive dependence on initial conditions, topological transitivity, and dense periodic orbits. Among these, the study of sensitive dependence focuses on a core issue: whether a positive top Lyapunov exponent exists on a subset of the phase space with positive measure. This question remains a central concern for both mathematicians and physicists.

In this paper, we study the stochastic flow of diffeomorphisms $\mathbf{x}_t:\mathbb{T}^2\to \mathbb{T}^2$, $t\ge 0$, defined by the random ODE
\begin{equation} \label{Lagrange1}
	\frac{d}{dt}\mathbf{x}_t=u_t(\mathbf{x}_t),\quad \mathbf{x}_0=x.
\end{equation}
Here, the random velocity field $\mathbf{u}_t:\mathbb{T}^2\to \mathbb{R}^2$ at time $t\ge 0$ evolves according to the following incompressible stochastic Navier-Stokes equations
\begin{align} \label{2NS}
\begin{cases} 
\dot{u}_t = \nu \Delta u_t - u_t \cdot \nabla u_t + \sum_{k \in \mathcal{Z}_0} q_k \nabla^{\perp} \Delta^{-1} e_k(x) \, \Dot{W}^k_t, \\ 
\nabla \cdot u_t = 0, \\ 
u_0 \in \mathcal{H}, 
\end{cases}
\end{align}
where \( \mathcal{Z}_0 \subseteq \mathbb{Z}^2 \setminus \{0\} \) is the finite collection of forced modes, \(\{W^k_t\}_{k \in \mathcal{Z}_0}\) are a family of independent one-dimensional Wiener processes on a common canonical filtered probability space $(\Omega,\mathcal{F},(\mathcal{F}_t),\mathbb{P})$, \(\nu > 0\), \(q_k \neq 0\) for each \(k \in \mathcal{Z}_0\), \(\{e_k\}_{k \in \mathbb{Z}^2}\) is the real-valued Fourier basis defined in \eqref{ek}, and note that $\sum_{k \in \mathcal{Z}_0} q_k \nabla^{\perp} \Delta^{-1} e_k(x) \, W^k_t \in L^2_0(\mathbb{T}^2; \mathbb{R}^2)$. Here, \(\mathcal{H}:=H^{5}_{0}(\mathbb{T}^2; \mathbb{R}^2)\) and $L^2_0(\mathbb{T}^2; \mathbb{R}^2)$ denote the divergence-free, mean-zero vector fields in \(H^5(\mathbb{T}^2; \mathbb{R}^2)\) and \(L^2(\mathbb{T}^2; \mathbb{R}^2)\) respectively. The spatial variable $x=(x_1,x_2)$ belongs to a two-dimensional torus $\mathbb{T}^2$. That is, we impose periodic boundary conditions in space. We let constants freely depend on \(\mathcal{Z}_0, \nu, \max_k |q_k| + |q_k|^{-1}\). 

The pair $(u_t,\mathbf{x}_t)$, which comprises the velocity and position information of fluid particles at time $t$, is referred to as the \emph{Lagrangian process}. 
For systems \eqref{Lagrange1}-\eqref{2NS}, the chaoticity of the Lagrangian flow $\mathbf{x}_t$ (or \emph{Lagrangian chaos}) generally corresponds to the property that its top Lyapunov exponent is strictly positive.

In this paper, we consider mildly degenerate noise acting only on low modes \(\mathcal{Z}_0=\{k\in\mathbb Z^2:0<|k|\le N_*\}\), where \(N_*=N_*(\mathcal{E}_0,\nu)\) is chosen large enough to include all unstable directions, $\mathcal{E}_0:=\sum_{k\in \mathcal{Z}_0}q_k^2$. We prove that the Lagrangian flow generated by \eqref{Lagrange1}-\eqref{2NS} has a strictly positive top Lyapunov exponent. More precisely, we establish:
\begin{theorem} \label{L1}
For system \eqref{Lagrange1}-\eqref{2NS}, there exist an $N_*:=N_*(\mathcal{E}_0,\nu)$ and a deterministic constant $\lambda_+>0$ such that the following limit holds:
\begin{equation} \label{lambda+}
	\lim\limits_{t \to \infty}\frac{1}{t}\mathop{\rm log}|D_x\mathbf{x}_t|=\lambda_+  \,\,\,\, \text{for} \,\,\, \,\mu^1\times \mathbb{P}-a.e.\, (u_0,x,\omega),
\end{equation}  
where $D_x\mathbf{x}_t$ refers to the Jacobian matrix of $\mathbf{x}_t:\mathbb{T}^2\to \mathbb{T}^2$ taken at $x$ and $|D_x\mathbf{x}_t|$ denotes its norm, $\mu^1$ denotes the  unique stationary measure of Lagrangian process $(u_t,\mathbf{x}_t)$. Here, the positivity of $\lambda_+$ indicates exponential sensitivity of Lagrangian trajectories with respect to the initial data.
\end{theorem}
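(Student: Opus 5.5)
The plan follows the Bedrossian--Blumenthal--Punshon-Smith strategy, adapted to the refined Furstenberg criterion of \cite{NS}.

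\textbf{Step 1: existence of the exponent.} We first show that $\lambda_+$ exists and is deterministic. Well-posedness of \eqref{2NS} in $\mathcal{H}=H^5_0$, together with standard exponential moment (enstrophy-type) estimates, gives a Feller Markov process $u_t$. The Lagrangian process $(u_t,\mathbf{x}_t)$ on $\mathcal{H}\times\mathbb{T}^2$ inherits this structure. Uniqueness of the stationary measure $\mu^1$ comes from two facts: Lebesgue measure is invariant in $x$ by incompressibility, and, for $N_*$ large, the mildly degenerate noise with $\mathcal{Z}_0$ containing all unstable modes gives unique ergodicity of the velocity process (Foias--Prodi / asymptotic strong Feller). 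The integrability condition $\mathbb{E}_{\mu^1}\sup_{t\le1}\log^+|D_x\mathbf{x}_t|^{\pm1}<\infty$ follows from $|D_x\mathbf{x}_t|\le\exp(\int_0^t\|\nabla u_s\|_{L^\infty}ds)$ and Sobolev embedding. The multiplicative ergodic theorem then gives the limit \eqref{lambda+} $\mu^1\times\mathbb{P}$-a.s., with a deterministic $\lambda_+$. Since $\det D_x\mathbf{x}_t=1$, the sum of the exponents is zero, so $\lambda_+\ge0$. It therefore suffices to rule out $\lambda_+=0$.

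\textbf{Step 2: reduction to hypoellipticity of the projective process.} We apply the refined Furstenberg criterion of \cite{NS}. If $\lambda_+=0$, then one of two degenerate structures holds: either there is a measurable family of inner products on $\mathbb{R}^2$ invariant under $D_x\mathbf{x}_t$, or there is a measurable invariant finite union of lines. We rule both out by showing that the projective process $(u_t,\mathbf{x}_t,v_t)$, with $v_t=D_x\mathbf{x}_t v/|D_x\mathbf{x}_t v|\in P^1$, has a unique stationary measure and smooth transition kernels in the relevant sense. It suffices to prove two things: approximate controllability of $(u,x,v)$ in the manifold directions, and a strong Feller / smoothing property, in the version \cite{NS} requires, for the finite-dimensional projection.

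\textbf{Step 3: low--high splitting and partial Malliavin matrix.} Split $u=u^L+u^H$ with $u^L=P_{\le N_*}u$. The key tool is a partial Malliavin matrix $\mathcal{M}_t$ for the finite-dimensional variable $(u^L_t,\mathbf{x}_t,v_t)$, taken with respect to the noise only. We prove that $\mathcal{M}_t$ is nondegenerate, with $\mathbb{P}(\langle\mathcal{M}_t\xi,\xi\rangle<\epsilon|\xi|^2)\le C\epsilon^p$. The noise directly spans all directions $e_k$, $|k|\le N_*$, in $u^L$. This is why no brackets are needed in the fluid directions, and it is also where the mild degeneracy gives controllability of the low subsystem. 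For the $(x,v)$ directions we use first-order brackets $[F_0,e_k]$, where $F_0$ is the drift. These produce the vector fields $u^k(x)\cdot\nabla_x$ and $\nabla u^k(x)v\cdot\nabla_v$. Since $|k|\le N_*$ ranges over $\{(1,0),(0,1),(1,1),\dots\}$, these vector fields span $T_x\mathbb{T}^2\times T_vP^1$ at every point. This gives a finite-dimensional H\"ormander condition uniformly in $u^H$. Quantitative bounds then follow from a Norris-type lemma. The high modes are handled by dissipation: for $N_*$ large, $u^H$ is slaved to $u^L$ with contraction rate $\nu N_*^2$, in Foias--Prodi style. This yields gradient estimates $\|\nabla P_t\varphi\|\le C\|\varphi\|_\infty+\delta\|\nabla\varphi\|_\infty$, combining the smoothing in the low/manifold variables with asymptotic smoothing in the high modes.

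\textbf{Main obstacle.} The hardest step is the Malliavin nondegeneracy estimate uniformly over the infinite-dimensional high-mode component. The Lie brackets involve $\nabla u$ evaluated along the trajectory, and $u^H$ enters the drift for $(x,v)$ as a rough, non-controlled perturbation. To handle this, I would first establish moment bounds on $\|u^H\|_{H^5}$. I would then treat $u^H$ as a bounded adapted coefficient in the Norris lemma, so that the spanning constant depends only on these moments. Finally, I would choose $N_*(\mathcal{E}_0,\nu)$ large enough that the high-mode contraction dominates the cross terms.
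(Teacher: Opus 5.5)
Your Step 3 follows the paper's route. It uses a partial Malliavin matrix on $\mathcal H_l\times TM$, with nondegeneracy coming from the noise spanning $\mathcal H_l$ and from the first-order brackets $[\overline F,e_k\gamma_k]$ spanning $T_x\mathbb T^2\times T_vS^1$ (\cite[Lemma 5.3]{JEMS}), and it leaves the high modes uncontrolled, damped at rate $\nu N_*^2$. The paper makes this concrete in three ways: it uses the adapted inverse $S^l$ of the low block, switches the control on only on $[\tau_0,T_0]$ so that it matches $\Pi_l\mathcal J_{0,T_0}\mathfrak h$ exactly, and fixes $N_*$ before $\widetilde T_0$. The problems are in your reduction, Steps 1--2.

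\textbf{Uniqueness of $\mu^1$.} Uniqueness (hence ergodicity) of $\mu^1$ does not follow from incompressibility plus unique ergodicity of $u_t$. Those two facts only show that $\mu\otimes\mathrm{Leb}$ is stationary. Every stationary measure has $u$-marginal $\mu$, but its disintegration in $x$ need not be Lebesgue unless you have irreducibility in $x$ and some smoothing. For example, if the velocity were a pure shear $(f(x_2,t),0)$, then $x_2$ would be conserved and $\mu\otimes(\mathrm{Leb}_{x_1}\otimes\delta_c)$ would be stationary for every $c$, even though Lebesgue measure is invariant. Without ergodicity the MET gives no deterministic $\lambda_+$. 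In the paper, uniqueness is a consequence of (A$_2$)--(A$_3$): asymptotic strong Feller plus approximate controllability, in the Hairer--Mattingly sense (see the remark after Theorem \ref{FC}). In your plan you can obtain it the same way. Alternatively, it follows from uniqueness for the projective process, since every Lagrangian stationary measure lifts to a projective one because the fibre $S^1$ is compact.

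\textbf{What suffices in Step 2.} Your list is too short. In the refined criterion of \cite{NS}, the alternatives are a \emph{continuous} (on the support) invariant family of inner products, or a continuous invariant family of finite unions of lines. The continuity comes from asymptotic strong Feller of the \emph{full} Lagrangian and projective semigroups on $\mathcal H\times M$. Continuity in the $u$-variable is needed, so smoothing ``for the finite-dimensional projection'' is not enough.

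Controllability of $(u,x,v)$ only excludes the line-field alternative. An invariant conformal structure is compatible with the projective process being irreducible, uniquely ergodic and smoothing; think of $D_x\mathbf x_t$ acting by rotations in $z$-dependent coordinates. So uniqueness plus smooth kernels for $(u_t,\mathbf x_t,v_t)$ cannot rule it out. What rules it out is controllability of the cocycle itself. Starting from $(z,\mathbf x,\mathrm{Id})$, one must reach a neighbourhood of $(z',\mathbf x')$ with $|D_x\mathbf x_t|>M$ for every $M$, which contradicts continuity of the inner products. This is (A$_3$)(a).

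Neither (a) nor (b) follows from the mild degeneracy or from your Malliavin nondegeneracy, since both of those are local statements. The paper imports (a) and (b) from \cite[Section 7]{JEMS}, where they are built from shear and cellular-flow controls, a stability argument, and positivity of Wiener measure. Your proof needs that ingredient.

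\textbf{Weights.} Your gradient estimate must also carry the weights $e^{\eta V(u_0)}$, because the constants cannot be uniform in $u_0$. It should be proved at a short time with $\gamma<1$ and then iterated through the super-Lyapunov property to get $\delta_t\to0$.
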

\begin{remark} \label{1.2}
In fact, we can present the following stronger result, which helps support findings on scalar turbulence (c.f. \cite{JEMS,BBP22}): 
\begin{equation} \label{lambda+-P}
	\lim\limits_{t \to \infty}\frac{1}{t}\mathop{\rm log}|D_x\mathbf{x}_t\nu|=\lambda_+>0 \,\, \text{for} \,\,\, \mu^P\times \mathbb{P}-a.e.\, (u_0,x,v,\omega),
\end{equation}
where $\mu^P$ denotes the unique stationary measure for projective process (c.f. Definition \ref{Lagrange-process-def}). 
\end{remark} 	

\subsection{Background and motivations} \label{Relevant literature}

The stochastic 2D incompressible Navier–Stokes equations on a periodic domain serve as an approximation for three-dimensional turbulence in thin domains or as a simplified model of the 3D Navier-Stokes equations. Accordingly, this system is a standard model for the statistical study of randomly forced turbulence (see reference \cite{turbulence} and the references therein). To characterize the long-time behavior of fluids under random perturbations, the theory of ergodicity and mixing for stochastic Navier–Stokes equations has been relatively well developed (see \cite{JFA-12,JFA-41,JFA-42,JFA-45,JFA-69,JFA-87,HM06,HM11a} and references therein). However, ergodicity and mixing represent only the starting point of research in statistical fluid mechanics. To gain a deeper understanding of the nature of turbulence, further studies from other perspectives are still required.


In recent years, some progress has been made in studying fluid turbulence from the perspective of chaos. Related research has primarily developed along two lines: one based on the velocity field, known as Eulerian chaos, and the other based on particle trajectories, referred to as Lagrangian chaos. Both concepts themselves, as well as the relationship between them, have been extensively discussed in the physics literature (see \cite{AGM96, V2-BJPV98, CFVP91, FdCN01, GV94}). Regarding Eulerian chaos, Bedrossian et al. investigated Galerkin approximations of the 2D Navier–Stokes system driven by degenerate noise, establishing Eulerian chaos and providing quantitative lower bounds \cite{BBPS22d, BPS24}. However, the Eulerian chaos of the full stochastic Navier–Stokes system remains an open problem. In terms of Lagrangian chaos, Bedrossian et al. \cite{JEMS} gave the first rigorous proof of Lagrangian chaos induced by a continuous-time stochastic fluid model. Specifically, they established the result for the 2D Navier–Stokes equations driven by time-white noise with non-degenerate excitation acting on the high-frequency modes. Building on this noise setup and the established results, their subsequent work derived a series of important conclusions, including almost sure exponential mixing of passive scalars \cite{BBPS22a}, enhanced dissipation \cite{BBPS21}, and a rigorous version of Batchelor law \cite{BBPS22b}. Following \cite{JEMS}, Cooperman et al. \cite{NS} obtained Lagrangian chaos for the 2D incompressible Navier–Stokes equations driven by highly degenerate noise. Recently, \cite{V2} extended this framework to the case of highly degenerate non-Gaussian bounded noise, though conclusions are currently mainly obtained at the discrete-time level. 

Overall, the 2D stochastic Navier–Stokes equations have become a fundamental model for studying Lagrangian chaos within a rigorous mathematical framework. At the same time, existing proofs often rely strongly on the noise structure and the corresponding technical apparatus: the argument in \cite{JEMS} depends on the strong Feller property of the Markov semigroup, which typically requires the noise to be non-degenerately excited in high-frequency modes, while the treatment of highly degenerate noise in \cite{NS} requires introducing a rather intricate chain of technical steps, including the verification of H\"{o}rmander-type conditions and Malliavin calculus. These limitations result in a higher barrier to understanding and, when extending the study to more complex fluid models, necessitate repeating lengthy Lie algebraic computations to reconstruct key steps such as the regularization of the Markov semigroup. Consequently, a natural research direction is to seek to establish Lagrangian chaos under a noise structure that reflects the `large-scale stirring mechanism' --where the noise is non-degenerate primarily in low-frequency directions--while remaining sufficiently non-degenerate to allow for a more transferable proof mechanism.

Motivated by these, we take the 2D stochastic incompressible Navier–Stokes equations as the core model and aims to establish a Lagrangian chaos analysis framework that is physically reasonable and technically more transparent. This framework is intended to promote the extension of the relevant theory to more general multi-physics coupled fluid models. The random forcing adopted here is a mildly degenerate noise: it acts only on the first \(N_*(\nu)\) modes, where \(N_*(\nu)\) varies with the viscosity coefficient \(\nu\) and covers all unstable directions of the system. To our knowledge, this is the first work that studies and establishes Lagrangian chaos for a fluid equation under such a mildly degenerate noise setting. This noise structure combines an intuitive physical interpretation aligned with large-scale stirring with a sufficient number of randomly forced directions. It allows the analysis to proceed with significantly reduced reliance on technical devices such as H\"{o}rmander-type Lie algebraic computations, thereby leading to a more direct and transparent line of argument and contributing to the understanding of the chaos generation mechanism.

\subsection{Setup and notations} \label{notaion}
Let $\mathbb{T}^2={[0,2\pi]}^2$ denote the period box. We define the following real-valued Fourier basis on this space.
\begin{equation}\label{ek}
e_k(x) := 
\begin{cases} 
	\sin(k \cdot x), & k\in \mathbb{Z}_+^2, \\
	\cos(k \cdot x), & k\in \mathbb{Z}_-^2,
\end{cases}
\end{equation}
where $\mathbb{Z}_+^2:=\{(k_1,k_2)\in \mathbb{Z}^2:k_2>0\}\bigcup \{(k_1,k_2)\in \mathbb{Z}^2:k_1>0,k_2=0\}$ and $\mathbb{Z}_-^2=-\mathbb{Z}_+^2$. For convenience, we rewrite 
$$
\sum_{k \in \mathcal{Z}_0} q_k \nabla^{\perp} \Delta^{-1} e_k(x) \, W^k_t
$$
as 
$$
\sum_{k \in \mathcal{Z}_0} q_k\gamma_k e_k(x) \, W^k_t,
$$
where $\gamma_k:=-k^{\perp}/{|k|^2}$. Under our noise assumption, $q_k\neq 0$ for any $k\in\mathcal{Z}_0$. When a colored forcing convention is used, one may take 
\begin{align} \label{qk}
q_k\approx |k|^{-\alpha} \;\;\;\;\text{for} \;\;0<|k|<N_*,
\end{align}
where $\alpha\in (5,\infty)$ is as in the classical coloring assumption (c.f. \cite{JEMS}).

Next, we first present the definition of the base process and recall the well-posedness and ergodicity results required in the subsequent analysis. 
\begin{definition} \label{base-process-def}
The base process refers to the Markov process \(\{u_t\}_{t\geq 0}\) on \(\mathcal{H}\) which solves the incompressible stochastic Navier-Stokes equation \eqref{2NS} with initial data given by \(u_0\). 
\end{definition}

\begin{proposition} [Well-posedness and ergodicity for base process,\cite{HM06}]\label{base-posedness-ergodicity}
For the system $\eqref{2NS}$, the following holds for any $T>0$:
\begin{enumerate}[label=(\alph*), leftmargin=3em]
	\item For all functions \( u \in \mathcal{H} \) and with probability 1, there exists a unique mild solution \( u_t \in C\big([0, T]; \mathcal{H}\big) \) with \( u_0 = u \). As a function of the noise sample \( \omega \in \Omega \), the solution \( u_t \) is measurable, \( \mathcal{F}_t \)-adapted, and belongs to \( L^p\big(\Omega; C([0, T]; \mathcal{H})\big) \) for all \( p \geq 1 \). Moreover, \( (u_t) \) itself is a Feller Markov process on \( \mathcal{H} \).
	\item The Markov process \( u_t \) admits a unique Borel stationary measure \( \mu \) in \( \mathcal{H} \).
\end{enumerate}
\end{proposition}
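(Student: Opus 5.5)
The proposition is essentially the classical well-posedness and unique ergodicity theory for the stochastically forced 2D Navier--Stokes equations, cited from \cite{HM06}. The plan is to reduce the problem to the vorticity formulation and check that our setting fits those results. Writing $w_t=\nabla^{\perp}\cdot u_t$, the equation becomes
$$\partial_t w=\nu\Delta w-u\cdot\nabla w+\sum_{k\in\mathcal{Z}_0}q_k e_k\,\dot W^k_t,$$
with $u=\nabla^{\perp}\Delta^{-1}w$. The map $u\mapsto w$ is an isomorphism from $H^5_0$ (divergence-free, mean-zero) onto mean-zero $H^4$. All statements about $u$ in $\mathcal{H}$ are therefore equivalent to the corresponding statements about $w$ in $H^4$.

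For part (a), I proceed in three steps.

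\textbf{Stochastic convolution.} I split $w_t=z_t+v_t$, where $z_t=\int_0^t e^{\nu(t-s)\Delta}\sum_k q_k e_k\,dW^k_s$ is the Ornstein--Uhlenbeck part. Since only finitely many smooth modes are forced, $z\in L^p(\Omega;C([0,T];H^m))$ for every $m$, so it has arbitrary smoothness.

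\textbf{Pathwise remainder.} The remainder $v$ solves a random PDE, $\partial_t v=\nu\Delta v-(u_v+u_z)\cdot\nabla(v+z)$. Global existence and uniqueness of a mild solution in $C([0,T];H^4)$ then follow pathwise from the deterministic 2D theory. First I get local existence by a fixed point in $C([0,T_0];H^4)$. Then I prove a priori bounds, starting from the $L^2$ enstrophy estimate, where the transport term cancels. Higher Sobolev norms are bootstrapped via the standard commutator estimates, using that in 2D the $H^s$ growth is controlled once $\|\nabla u\|_{L^\infty}$ is integrable. That integrability comes from the Brezis--Gallouet or parabolic smoothing bounds.

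\textbf{Moments and regularity.} Measurability and adaptedness follow from the fixed-point construction. For $L^p$ moments, I apply Itô's formula to $\|w\|_{L^2}^2$ and to higher powers, then exponential martingale bounds. This gives $\mathbb{E}\sup_{t\le T}\exp(\eta\|w_t\|_{L^2}^2)<\infty$ for small $\eta$, and the higher norms inherit polynomial moments through the bootstrap. Continuous dependence on the initial datum, uniform on bounded sets, gives the Feller property. The Markov property follows from pathwise uniqueness.

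For part (b), existence of a stationary measure comes from Krylov--Bogoliubov. Energy dissipation gives $\sup_t\mathbb{E}\|w_t\|_{H^5}$ bounded from time $1$ on by parabolic smoothing. Since $H^5\hookrightarrow H^4$ compactly, the time averages are tight in $\mathcal{H}$.

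Uniqueness is the main obstacle. I would invoke the asymptotic strong Feller property plus a weak irreducibility argument of Hairer--Mattingly \cite{HM06}. Because $\mathcal{Z}_0$ contains all modes with $0<|k|\le N_*$, the forcing contains two non-parallel wavevectors of different length, so the Hörmander-type condition of \cite{HM06} holds. Alternatively, the simpler Mattingly/E--Mattingly--Sinai argument applies: choosing $N_*$ so that $\nu N_*^2$ dominates the Lipschitz constant of the nonlinearity on typical energy levels, and controlling the low modes through Girsanov, yields a coupling.

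Irreducibility comes from dissipation. With zero control the deterministic flow drives every solution to $0$, and the support theorem then shows every neighborhood of $0$ is reached with positive probability. Together with asymptotic strong Feller, this gives uniqueness of $\mu$. Finally, $\mu$ is supported on $\mathcal{H}$ by the moment bounds above.
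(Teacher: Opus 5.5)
Your proposal is correct and is essentially the argument the paper delegates to \cite{HM06}: pathwise well-posedness with moment bounds (recorded in the paper as Proposition \ref{ME}), Krylov--Bogoliubov for existence, and asymptotic strong Feller plus weak irreducibility at $0$ for uniqueness. The H\"{o}rmander-type condition of \cite{HM06} holds for $\mathcal{Z}_0=\{0<|k|\le N_*\}$ once $N_*\ge\sqrt{2}$; the one detail to make explicit is that \cite{HM06} works with vorticity in $L^2$, so uniqueness in $\mathcal{H}$ follows because any stationary measure on $\mathcal{H}$ is also stationary for the $L^2$ dynamics (the inclusion $\mathcal{H}\hookrightarrow L^2$ being injective), while your moment bounds show the $L^2$ stationary measure is carried by $\mathcal{H}$.
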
 

Then, we introduce the Lagrangian process and the projective process studied in this work. Under the assumption that the velocity field $u$ is sufficiently regular, these processes are well-defined and possess the Markov property, which is ensured by verifying the independent-increments hypothesis $\textbf{(H1)}$ in Subsection \ref{RDS framework}.

\begin{definition} \label{Lagrange-process-def}
Given a geodesically complete Riemannian manifold \((M,\hat{g})\) and an associated vector field map \(\Theta: H^{5}_0(\mathbb{T}^2;\mathbb{R}^2) \to \mathcal{X}_\mathrm{loc}^1(M)\) \emph{(}where \(\mathcal{X}_\mathrm{loc}^1({M})\) denotes the set of \(C_{\mathrm{loc}}^{1}\) vector fields on \({M}\)\emph{)}--associating to any velocity field \(u_t\) a vector field \(\Theta_{u_t}\) on \(M\)--we define the associated Markov process \(\{u_t,\mathbf{p}_{t}\}_{t\geq 0}\) where \((u_t)\) is the base process and \(\mathbf{p}_{t}\) solves
\begin{equation} \label{Lagrange-process-eq}
	\dot{\mathbf{p}}_{t} = \Theta_{u_{t}}(\mathbf{p}_{t}),  \quad  \mathbf{p}_{0} \in M.
\end{equation}
In particular, the Lagrangian process \((u_t, \mathbf{x}_t)\) and the projective process \((u_t, \mathbf{x}_t, v_t)\) correspond to the choices \(M = T^2\) and \(M = T^2 \times S^1\), respectively, with  
\begin{align}\label{l2}
\Theta_{u}^1(\mathbf{x}) = u(\mathbf{x}), \quad
\Theta_{u}^P(\mathbf{x},v) = \bigl(u(\mathbf{x}),\, \Pi_{v} Du(\mathbf{x}) v \bigr),
\end{align}
where \(\Pi_v = \text{Id} - v \otimes v\)  is the orthogonal projection from \(\mathbb{R}^2\) onto the tangent space of \(S^1\) \emph{(}viewing $v$ as a unit vector in \(\mathbb{R}^2\)\emph{)}.
\end{definition}

\begin{remark} \label{L-wp}
	Note that the flow \( \mathbf{p}^t \) is a well-defined diffeomorphism since the velocity field \( u_t \) belongs to $\mathcal{H}$ \emph{(}so it is at least \( C^2 \) by Sobolev embedding\emph{)}. This gives rise to an \( \mathcal{F}_t \)-adapted, Feller Markov process \( (u_t,\mathbf{p}_t) \) on \( \mathcal{H}\times M\) defined by \eqref{Lagrange-process-eq}.
\end{remark}
\subsection{Outline of the proof and contributions}

Given that the Lagrangian system associated with the 2D incompressible Navier–Stokes equations is conservative and the forcing is driven by mildly degenerate noise, the analysis in this paper follows the framework of continuous random dynamical systems, the multiplicative ergodic theorem, and the Furstenberg criterion. The specific form of Furstenberg criterion adopted here is taken from \cite{NS}. Within this framework, we establish a criterion for Lagrangian chaos applicable to the case of mildly degenerate noise (c.f. Theorem \ref{FC}): it suffices to verify that the derivative cocycle \(\mathcal{A}\) of system \eqref{Lagrange1} satisfies the integrability condition \textbf{(A$_1$)}, that the Lagrangian process and its projective process satisfy the asymptotic strong Feller property \textbf{(A$_2$)}, and that the approximate controllability condition \textbf{(A$_3$)} holds. Here, \textbf{(A$_1$)} follows from regularity estimates for the base process, while \textbf{(A$_3$)} has been established in \cite[Section 7]{JEMS} by constructing smooth controls via shear and cellular flows, together with a stability analysis and the positivity of Wiener measure. Consequently, the proof of our main result ultimately reduces to establishing \emph{short-time asymptotic gradient estimates} for the Markov transition semigroups of the Lagrangian and projective processes under mildly degenerate noise.(c.f. Theorem \ref{SAGE}).



The key to establishing the asymptotic gradient estimates for the transition semigroup lies in the application of the Malliavin calculus that was employed in \cite{HM06,HM11a}. The main idea is to construct a small perturbation \(g_{_{t}}\) of the driving Wiener path such that, at time \(t\), the variation of the state \((u_t,\mathbf{p}_{_{t}})\) induced by this perturbation cancels as much as possible the variation generated by a small perturbation \(\mathfrak{h}\in \mathcal{H}\times TM\) of the initial condition, thereby achieving an effective matching between the Malliavin derivative and the Fr\'{e}chet derivative. Consequently, the problem reduces to constructing a smooth control together with bounds on its cost, and to estimating the associated error terms (c.f. \eqref{3.9}).
\subsubsection{Discussion of control construction and error estimates}
For the case driven by mildly degenerate noise, a standard strategy for obtaining asymptotic gradient estimates of the transition semigroup for the base process is the low-high frequency splitting method adopted in \cite[Subsection 4.5]{HM06}. One imposes a control on the low-frequency component and uses finite-time stabilization to eliminate the low-frequency error after a finite time. No control is applied to the high-frequency component, and the corresponding error is allowed to decay due to high-mode dissipation. Combined with error inversion for control design, this ensures bounded control cost and allows the total error to satisfy the required estimates. It is important to stress that the viability of this scheme ultimately hinges on the invertibility of the low-frequency diffusion coefficient.


Unlike \cite{HM06}, which treats only the base process, we must also handle the additional degrees of freedom \(\mathbf{x}_t\) and \(v_t\) in the extended system. These variables are not directly driven by the noise. Instead, randomness reaches them through the noise acting on \(u_t\) and is then transmitted via \eqref{Lagrange-process-eq}–\eqref{l2}. As a result, the degeneracy of the extended system is substantially enhanced compared to the system that only considers the velocity field. A direct consequence is the following. If one naturally includes \((\mathbf{x}_t,v_t)\) in the low-frequency subsystem by taking \(\Pi_l(u,\mathbf{x},v)=(u^l,\mathbf{x},v)\), then the associated low-frequency diffusion coefficient \(\hat {Q}_l:=\Pi_l \hat {Q}\) (cf. \eqref{abstract system}) is necessarily non-invertible. Consequently, the error-inversion mechanism used in \cite[Section 4.5]{HM06} can no longer be applied. A seemingly plausible alternative is turn to place the manifold components \((\mathbf{x}_t,v_t)\) into the high-frequency subsystem in order to preserve invertibility of the low-frequency diffusion coefficient. However, \(\mathbf{x}_t\) and \(v_t\) lack the dissipative structure of the high-frequency velocity modes, placing them on the high-frequency side would necessitate imposing additional control on them. This ultimately forces the analysis back into the infinite-dimensional setting, which is precisely one of the difficulties encountered in \cite{NS,HM11a,JFA} for highly degenerate noise. 

For this reason, and exploiting the finite-dimensional nature of the low-frequency subsystem, we use the decomposition \(\Pi_l(u,\mathbf{x},v)=(u^l,\mathbf{x},v)\) and \(\Pi_h(u,\mathbf{x},v)=u^h\). We implement controls only on the low-frequency subsystem and fully exploit high-mode dissipation to obtain the desired estimates. To overcome the obstruction caused by the non-invertibility of \(\hat {Q}_l\), we abandon the traditional error-inversion design and instead introduce a new finite-dimensional Malliavin framework. We construct an \emph{invertible partial (finite-dimensional) Malliavin matrix} for the low-frequency subsystem and use it to build an explicit smooth control. This provides representations and estimates for the Malliavin derivative, together with corresponding representations and bounds for the error terms. Notably, the invertibility result for the partial Malliavin matrix here is stronger than the non-degeneracy condition in \cite{NS,JFA,HM11a}, which holds only on a cone. As a result, the control can be directly constructed without the need to use a regularized version of the Malliavin matrix, and the error estimates are simplified accordingly.

More specifically, for a short time \(T_0(<1)\), we aim to construct a control \(g_{_{t}}=(g^{l}_{_{t}},0)\) over the interval \([0,T_0]\) such that the low-frequency part of the corresponding perturbed Malliavin derivative satisfies, as closely as possible, the condition
\begin{align} \label{1-1}
\Pi_l\big(\mathcal{D}^{g}(u_{_{T_0}},\mathbf{p}_{_{T_0}})\big) =:\mathcal{D}^{g^{l}}(u^l_{_{T_0}},\mathbf{p}_{_{T_0}})=\Pi_l\big(\mathcal{J}_{0,T_0}\mathfrak{h}\big),
\end{align}
where the full Jacobian \( \mathcal{J}_{s,t} \) is the linear map from \( \mathcal{H} \times T_{\mathbf{p}_s}M \) to \( \mathcal{H} \times T_{\mathbf{p}_t}M \) that satisfies \eqref{Jh1}. Note that if there were no coupling terms between the high and low frequencies, then in the ideal scenario, the Duhamels's formula would yield
\begin{align} \label{1-2}
\mathcal{D}^{g^{l}}(u^l_{_{T_0}},\mathbf{p}_{_{T_0}}) = \int_0^{T_0}R_{s,T_0}^l\hat{Q}_lg_{_{s}}^lds,
\end{align}
where the finite-dimensional matrix \( R_{s,t}^l \) is a linear map from \( \mathcal{H}_l \times T_{\mathbf{p}_s}M \) to \( \mathcal{H}_l \times T_{\mathbf{p}_t}M \) that satisfies \eqref{Rl1}. Naturally, our goal is to use a least-squares approach to design a control $g_{_{s}}^l$ such that 
\begin{align} \label{1-3}
\int_0^{T_0}R_{s,T_0}^l\hat{Q}_lg_{_{s}}^lds=\Pi_l\big(\mathcal{J}_{0,T_0}\mathfrak{h}\big).
\end{align}
To address the fact that \(R_{s,t}^l\) is not adapted, we introduce its (adapted) inverse matrix \(S_{s,t}^l\). After a time \(\tau_0 \in [1/2,1)\), we then define the finite-dimensional \emph{partial Malliavin matrix}
\[
\mathcal N_{T_0}^l=\int_{\tau_0}^{T_0} S_{\tau_0,s}^l\hat Q^l\big(S_{\tau_0,s}^l\hat Q^l\big)^\top\,ds .
\]
The reason for setting the lower limit of integration to \(\tau_0\) is to fully exploit the dissipation effect of the high-frequency modes in the system, ensuring that the high-frequency part of the error satisfies the required estimates. A detailed explanation of this point will be provided at the end of this subsection. Accordingly, set \(g_t^l=0\) on \([0,\tau_0]\) and activate the control only on \([\tau_0,T_0]\). More specifically, once the invertibility of \(\mathcal N_{T_0}^l\) is established, we can construct on \([\tau_0,T_0]\) the following smooth low-frequency control \(g^l_{_{t}}\): 
\[
g^l_{_{t}}=(S_{_{\tau_0,t}}^l\hat{Q}_l)^\top(\mathcal{N}_{_{T_0}}^l)^{-1}S_{_{\tau_0,T_0}}^l\Pi_l(\mathcal{J}_{0,T_0}\mathfrak{h})
\]  
which yields a global control \(g_t^l\) defined on \([0,T_0]\) and ensures that the required matching relation \eqref{1-3} holds. To maintain the flow of the exposition, the proof of the invertibility of \(\mathcal N_{T_0}^l\) is postponed to the next subsection. Finally, the cost of this non-adapted control is estimated with the \(L^2\)-isometry for the Skorohod integral.

We now turn to the error estimate, beginning with the low-frequency component. In this work we adopt a frequency-splitting framework and build the Malliavin structure only on a finite-dimensional low-frequency subsystem. Compared with the approach in \cite{NS,JFA,HM11a}, which is based on the full infinite-dimensional Malliavin matrix, this setting has two consequences.

On the one hand, the low-mode equation now inevitably contains coupling terms between low and high modes which are similar to those in \cite{JEMS}. These terms involve Malliavin derivatives of the high-mode components, and hence form the main additional source of error in the low-mode estimate. The difference is that the control constructed earlier in this paper does not depend on Malliavin derivatives. This significantly simplifies the subsequent analysis. Indeed, unlike in \cite{JEMS}, we do not need to first introduce auxiliary processes $(\zeta_t,\xi_t)$ depending on Malliavin derivatives, nor do we need to truncate $u$ in order to prove the existence and uniqueness of these auxiliary processes. Instead, we can directly establish the well-posedness of the Malliavin derivative equations for both the low and high modes, and then obtain the corresponding derivative estimates. In this way, we complete the estimate for this part. On the other hand, since the low-mode Malliavin matrix $\mathcal N_{T_0}^l$ is invertible, the matching relation \eqref{1-3} can be realized exactly. Therefore, we avoid the error terms arising from the least-squares construction in \cite{NS}, and the corresponding computations are also greatly simplified.

For the high-frequency component of the error, we exploit the dissipative structure of the high-frequency modes and decompose it into two terms, namely the purely high-frequency dissipative term and the high–low frequency coupling term. The key point is that the purely dissipative term can be controlled directly by the decay of the high-frequency dynamics, yielding a bound consistent with the target error estimate of the form \eqref{erro-estimate1}. To make this dissipation effective, the low-mode control must not act from time \(0\). Otherwise, when estimating the purely dissipative term one inevitably obtains a factor of the form \((1+q(T_0))e^{\eta V(u_0)}\), where \(q(T_0)\to0\) as \(T_0\to0\), which is incompatible with \eqref{erro-estimate1}. To circumvent this difficulty, \cite[Section 6]{JEMS} introduced an auxiliary process \(z_t\) and a truncated process \(w_t^\rho\). In contrast, we avoid the additional auxiliary and truncation procedures by adopting a more direct time-splitting strategy. We first let the system evolve freely on \([0,\tau_0]\), that is, we set the control to zero, so that the high-frequency component decays sufficiently fast due to dissipation. We then activate the low-frequency control only on \([\tau_0,T_0]\). Consequently, the relevant key term can be compressed into
\[
\Big(e^{C\tau_0}N_*^{-2}+\frac{e^{C\widetilde{T}_0}}{4\nu N_*^2}\widetilde{T}_0^{3/2}+\frac{1}{4\nu N_*^2} \, e^{C\widetilde{T}_0}\widetilde{T}_0^{5/2}+\widetilde{T}_0^{3}\Big)e^{\eta V(u_0)},\qquad \widetilde T_0:=T_0-\tau_0,
\]
which meets the requirement of \eqref{erro-estimate1}.

\subsubsection{Discussion of the nondegeneracy of the partial Malliavin matrix}

We now address the non-degeneracy of the partial Malliavin matrix \(\mathcal N_{T_0}^l\). This property plays a central role in both the construction of the control and the ensuing error estimates. Let us emphasize that \(\mathcal N_{T_0}^l\) is defined on the finite-dimensional space \(\mathcal {H}_l\times TM\) and is a finite-dimensional adapted matrix. In contrast to the highly degenerate setting--where one must deal with the full Malliavin matrix $\mathcal{M}$--we can obtain a stronger conclusion here: \(\mathcal N_{T_0}^l\) is invertible. Moreover, the non-degeneracy  bound in Proposition \ref{M-1} essentially reduces to establishing the probabilistic spectral bound \eqref{spectral bound1} for the Malliavin matrix \(\mathcal N_{T_0}^l\).

A standard approach to such a probabilistic spectral bound consists of two steps. The first is to establish a H\"{o}rmander-type spanning condition, which requires that the Lie brackets of H\"{o}rmander type cover all unstable directions of the system. More precisely, one proves the following quantitative lower bound only for those vectors $h$ that lie in a certain cone: for every \( N > 0 \), there is a \( n > 0 \) and a finite set \( \mathfrak{B} \subset \mathcal{B}_n \)  such that for each \(\alpha>0\), we define the cone
\[
\mathcal K_{N,\alpha}
:=\Bigl\{h\in\mathcal H:\ \|P_Nh\|^2>\alpha\|h\|^2\Bigr\},
\]
where \(\{P_N\}\) is a sequence of projection operators onto successively larger spaces. On this cone, the following lower bound holds:
\begin{align}\label{1.12}
\sum_{b(u)\in\mathfrak{B}}\langle h,\,b(u)\rangle^2\ge \alpha C\|h\|^2, \quad \forall\,h\in\mathcal \mathcal{K}_{N,\alpha},
\end{align}
where \( C > 0 \) is a constant independent of \( \alpha \), $\mathcal{B}_0:=\text{span}\{e_k,\, k\in \mathcal{Z}_0\}$, $\mathcal{B}_n:=\text{span}\{[E,F],[E,e_k],\,E:E\in \mathcal{B}_{n-1},\, k\in \mathcal{Z}_0\}$ and $F$ is the drift term.
The second step is to differentiate \(\langle \mathcal{J}_{s,t}e_k,h\rangle\) in time and derive, on a high-probability event, an `implication estimate': if \(\langle \mathcal{M}_T h,h\rangle\) is sufficiently small, then it necessarily forces the projections onto the directions in $\mathfrak{B}$ to become small simultaneously, i.e.
\begin{align}\label{1.13}
\langle \mathcal{M}_T h,h\rangle<\varepsilon\|h\|^2 \ \Rightarrow\ \sum_{b(u)\in \mathfrak{B}}\langle h,\,b(u)\rangle^2 \le  \varepsilon^q\|h\|^2.
\end{align}
Combining this upper implication with the H\"{o}rmander lower bound \eqref{1.12}, and choosing \(\varepsilon\) appropriately, one obtains the probabilistic spectral bound.


Unlike the classical setting where only the velocity field is involved, our extended system additionally includes degrees of freedom \((\mathbf{x}_t, v_t)\) on the manifold. Consequently, the non-degeneracy argument must also incorporate the vector fields on the tangent bundle \(TM\). To establish the analogue \eqref{1.14} of the lower bound \eqref{1.12} required in the first step, we need to exploit the Lie brackets \([\overline{F}, e_k\gamma_k]\) to generate new directions in both the position and the projection components, where \(\overline{F}\) is the manifold vector field defined in \eqref{F1}. This leads to the following spanning condition on the tangent bundle:
\[
\operatorname{span}\{[\overline{F},e_k\gamma_k](\mathbf{x},v):k\in\mathcal Z_0\}=T_{\mathbf{x}}\mathbb T^2\times T_vS^1.
\]
Building on this, the mildly degenerate noise structure considered in this work allows the above first step to be completed along a substantially simpler route than in the highly degenerate case treated in \cite{NS}. More specifically, in \cite{NS} one typically has to perform repeated time differentiations in order to successively generate new Lie bracket directions on \(P_N\mathcal H\times TM\) for sufficiently large \(N>0\). In contrast, we work with the finite-dimensional partial Malliavin matrix associated with the low-frequency subsystem, so that the H\"{o}rmander condition only needs to be verified on \(\mathcal H_l\times TM\). On the one hand, the spanning property on \(\mathcal H_l\) follows directly from the structure of the mildly degenerate noise. On the other hand, the verification on \(TM\) relies on an explicit and tractable representation of the low-frequency object \(S_{\tau_0,t}^l e_k\gamma_k\). More precisely, since \(S_{\tau_0,t}^l\) is adapted, we may apply It\^{o}'s formula to \(S_{\tau_0,t}^l e_k\gamma_k\) and expand it explicitly. In this expansion, terms involving the bracket \([\overline{F},e_k\gamma_k]\) appear naturally. Combining this with the above spanning condition on \(TM\), we obtain the following spectral-type lower bound:
\begin{align} \label{1.14}
\max\Big\{\big|\langle e_k\gamma_k,\mathfrak{h}^l\rangle\big|,\ \big|\langle \Upsilon_L e_k\gamma_k,\mathfrak{h}^l\rangle\big|:k\in\mathcal{Z}_0\Big\}
\ge \frac{C(N_*,\nu)}{1+\|u^l\|_{\mathcal{H}_l}} \|\mathfrak{h}^l\|_l.
\end{align}
where \(\Upsilon_L e_k\gamma_k\) is defined in \eqref{Upsilon}.

In the second step, we need to establish the counterpart, in our present setting, of the general implication \eqref{1.13}, namely \eqref{upper}. This is obtained by differentiating in time the quantity \(\langle S_{\tau_0,t}^l e_k\gamma_k,\mathfrak{h}^l\rangle\). We then combine \eqref{upper} with the spectral-type lower bound to deduce the desired probabilistic spectral bound.

Finally, we emphasize that this work develops a unified analytical framework: on the low-frequency component, a control mechanism is used to capture the effective action of the stochastic forcing; on a finite-dimensional level, Malliavin calculus is employed to extract the required non-degeneracy; and high-frequency dissipation is leveraged to suppress small-scale instabilities. This framework is broadly applicable to the systematic study of Lagrangian chaos in incompressible fluid systems driven by mildly degenerate noise. In subsequent work, we will formulate more explicit abstract criteria tailored to this class of mildly degenerate noise and apply them to the three-dimensional hyperviscous Navier–Stokes equations as well as to multiphysics coupled fluid models, such as the two-dimensional Boussinesq and magnetohydrodynamics equations. Our aim is to further deepen the understanding of the mechanisms responsible for the emergence of chaos in statistical fluid mechanics.

It is worth noting that, in our recent work on the 2D Boussinesq equation \cite{CZ25}, the noise acts only on the temperature component and is highly degenerate; consequently, first-order Lie brackets \([\hat {F}, e_k\gamma_k]\) do not generate new directions on the tangent bundle, and higher-order brackets become necessary. Although the underlying principle remains the same--namely, achieving controllability by effectively `projecting' infinite-dimensional directions onto tangent-bundle vector fields--the Lie-bracket computations required both to verify the manifold spanning condition and to derive the relevant upper bounds are technically cumbersome. By contrast, the present framework preserves the same core mechanism while substantially reducing the computational overhead, thereby providing a more streamlined and practical route for extensions to the above classes of equations and beyond.

The paper is organized as follows. Section \ref{SEC2} reviews several basic results from the theory of random dynamical systems and incorporates the refined Furstenberg's criterion from \cite{NS} into a positivity criterion for the top Lyapunov exponent of Lagrangian systems associated with incompressible fluid models driven by mildly degenerate noise. The problem is then reduced to establishing short-time asymptotic gradient estimates for the Lagrangian process and projective process. In Section \ref{SEC3}, these gradient estimates for the extended system are reformulated in terms of the construction of controls for the low-frequency subsystem, together with corresponding error estimates, and the main result is proved under the assumption that a partial Malliavin matrix is invertible and satisfies suitable non-degeneracy bounds. Section \ref{SEC4} establishes the required non-degeneracy estimates for the partial Malliavin matrix and proves its invertibility. Finally, the Appendix collects proofs of several auxiliary results used in the paper.

\section{Fundamental mathematical framework and Furstenberg criterion}\label{SEC2}

Our objective is to prove that the top Lyapunov exponent is almost everywhere a positive constant. The proof proceeds in two stages: first, within the framework of random dynamical systems (RDS), we employ the Multiplicative ergodic theorem (MET) to establish the almost-everywhere existence and constancy of the top Lyapunov exponent; second, we apply the Furstenberg's criterion to verify its positivity.
 
To this end, this section first introduces the necessary notions and results concerning random dynamical systems and the Multiplicative ergodic theorem, then states the classical Furstenberg's criterion, and finally provides a criterion for the positivity of the top Lyapunov exponent adapted to the noise assumptions of this paper. It should be noted that this part essentially reformulates and synthesizes results from \cite{NS}, without introducing fundamentally new ideas.
\subsection{RDS framework and Multiplicative ergodic theorem} \label{RDS framework}
Let \((Z, \mathcal{B})\) be a measurable space, and \((\Omega, \mathcal{F}, \mathbb{P}, (\theta^t)_{t \in [0,\infty)})\) be a metric dynamical system with time index set \([0,\infty)\). The mapping  
\[
\mathcal{T} : [0,\infty) \times \Omega \times Z \to Z, \quad (t, \omega, z) \mapsto \mathcal{T}_\omega^tz
\]  
denotes a \emph{measurable random dynamical system} covering this metric system. If for every \(\omega \in \Omega\), the mapping \(\mathcal{T}_\omega: [0, \infty) \times Z \to Z\) belongs to the space \(C_{u,b}([0, \infty) \times Z, Z)\), then the random dynamical system is called \emph{continuous RDS}. Here, for metric spaces \(V, W\), the notation \(C_{u,b}(V, W)\) denotes the set of all continuous maps \(E: V \to W\) such that for every bounded set \(O \subseteq V\), the restriction \(E|_O\) is uniformly continuous and the image \(E(O)\) is a bounded subset of \(W\).

To ensure that the process \( z_t \) as above is Markovian, it is often further required that the continuous random dynamical system \( \mathcal{T} \) satisfies the usual \emph{independent increments assumption}.\\
{\textbf{(H)}} For all \( s, t > 0 \), we have that \(\mathcal{T}_{\omega}^{t}\) is independent of \(\mathcal{T}_{\theta^t \omega}^s\). That is, the \(\sigma\)-subalgebra \(\sigma(\mathcal{T}_{\omega}^{t}) \subset \mathcal{F}\) generated by the \(C_{u,b}(Z, Z)\)-valued random variable \(\omega \mapsto \mathcal{T}_{\omega}^{t}\) is independent of the \(\sigma\)-subalgebra \(\sigma(\mathcal{T}_{\theta^t \omega}^s)\) generated by \(\omega \mapsto \mathcal{T}_{\theta^t \omega}^s\).

As can be seen from the regularity results for solutions to the Navier-Stokes equations in \cite[Appendix A]{JEMS}, we have the following fundamental result.
\begin{proposition}[RDS for base, Lagrangian and projective processes] \label{RDSf}
Let  
\( \mathcal{U} : [0, \infty) \times \Omega \times \mathcal{H} \to \mathcal{H}, (t, \omega, u) \mapsto \mathcal{U}^t_\omega(u) \) 
denote the mapping sending, for a given \( t \geq 0 \) and \( \mathbb{P} \)-generic \( \omega \in \Omega \), a given \( u \in \mathcal{H} \) to the time-\( t \) vector field \( u_t \) conditioned on \( u_0 = u \). Then \( \mathcal{U} \) is a continuous RDS on the space \( Z = \mathcal{H} \) satisfying condition \textbf{\emph{(H)}}. 

Similarly, the random ODE \eqref{l2} defining the auxiliary process  
\( \mathbf{x}_t = \mathbf{x}^t_{\omega, u_0} \mathbf{x}_0 \) and $v_t = v^t_{\omega, u_0,\mathbf{x}_0}v_0$ 
are well posed, and we conclude as before that the corresponding mapping  
\( \mathfrak{L} : [0, \infty) \times \Omega \times \mathcal{H} \times T^2 \to \mathcal{H} \times T^2 \) and \(\mathfrak{P} : [0, \infty) \times \Omega \times \mathcal{H} \times T^2 \times S^1 \to \mathcal{H} \times T^2 \times S^1\)
for the Lagrangian process \( (u_t, \mathbf{x}_t) \) and projective process \( (u_t, \mathbf{x}_t,v_t) \) are continuous RDS satisfying \textbf{\emph{(H)}} on the space \( Z = \mathcal{H} \times T^2 \) and \( Z = \mathcal{H} \times T^2 \times S^1\) respectively. 
\end{proposition}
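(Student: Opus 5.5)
The plan is to build the RDS structure explicitly on the canonical Wiener space and check the three requirements (cocycle property, continuity in $C_{u,b}$, and the independence hypothesis \textbf{(H)}) for $\mathcal{U}$ first. For $\mathfrak{L}$ and $\mathfrak{P}$ the same properties then follow from well-posedness of the random ODEs \eqref{Lagrange-process-eq}--\eqref{l2} with a $C^2$ driving field.

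\textbf{Metric dynamical system and cocycle.} Take $\Omega=C_0([0,\infty);\mathbb{R}^{|\mathcal{Z}_0|})$ with Wiener measure $\mathbb{P}$ and the shift $\theta^t\omega(\cdot)=\omega(t+\cdot)-\omega(t)$, which preserves $\mathbb{P}$. Because the noise is additive and finite-dimensional, I would use the standard change of variables $u_t=w_t+z_t(\omega)$. Here $z_t$ is the stationary (or zero-initial) Ornstein--Uhlenbeck type solution of $\dot z=\nu\Delta z+\sum q_k\gamma_k e_k\dot W^k$. Since $z$ has finitely many smooth modes, it can be defined pathwise for \emph{every} $\omega$, for instance by integrating by parts in time. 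Then $w$ solves a random PDE
\[
\dot w=\nu\Delta w-(w+z)\cdot\nabla(w+z),
\]
with continuous-in-time coefficients, and this equation can be solved pathwise. This gives a map $\mathcal{U}^t_\omega(u)$ defined for every $\omega$. Uniqueness of solutions, together with the identity $z_{t+s}(\omega)$ expressed through $\theta^t\omega$, yields the perfect cocycle identity $\mathcal{U}^{t+s}_\omega=\mathcal{U}^s_{\theta^t\omega}\circ\mathcal{U}^t_\omega$.

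\textbf{Continuity and boundedness.} I would invoke the regularity estimates of \cite[Appendix A]{JEMS} for the 2D Navier--Stokes equations. Enstrophy and higher Sobolev energy estimates propagate $H^5$ regularity, with a bound by a constant depending on $\|u\|_{\mathcal{H}}$, $T$, and $\sup_{t\le T}\|z_t\|_{H^{6}}$. These give boundedness of images of bounded sets on $[0,T]\times O$. A Gronwall estimate for the difference of two solutions in $\mathcal{H}$ (using the a priori bound on a bounded set) gives uniform Lipschitz continuity in $u$. Combined with time continuity from the mild formulation, this places $\mathcal{U}_\omega\in C_{u,b}$. This step, closing $H^5$ estimates uniformly on bounded sets and pathwise, is the main obstacle; I expect it to be handled by citing \cite{JEMS} rather than reproving it. One detail needs care: the requirement is literally "for every $\omega$". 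I would therefore discard a $\theta$-invariant null set, or rely on the pathwise construction above, which avoids this issue.

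\textbf{Independence and the extended processes.} $\mathcal{U}^t_\omega$ is a measurable function of the increments $\{\omega(r)\}_{r\le t}$, while $\mathcal{U}^s_{\theta^t\omega}$ depends only on the increments $\{\omega(t+r)-\omega(t)\}_{r\le s}$. Brownian increments over disjoint intervals are independent, which gives \textbf{(H)}.

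For $\mathfrak{L}$ and $\mathfrak{P}$, the field $u_t\in H^5\subset C^3$ is continuous in $t$. The ODE $\dot{\mathbf{x}}=u_t(\mathbf{x})$ on the compact manifold $\mathbb{T}^2$, and the projective equation $\dot v=\Pi_vDu_t(\mathbf{x})v$ on $S^1$ (with $C^2$ coefficients), therefore have unique global flows. These flows depend continuously on $(t,u_0,\mathbf{x}_0,v_0)$, uniformly on bounded sets, by Gronwall applied to $\|u-u'\|_{C^1}\lesssim\|u-u'\|_{\mathcal{H}}$. The cocycle property and \textbf{(H)} are inherited from $\mathcal{U}$, since the skew-product flow on $[t,t+s]$ depends only on $\theta^t\omega$ and on the state at time $t$.
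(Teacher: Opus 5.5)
Your route is the standard one and matches what the paper intends. The paper gives no argument beyond pointing to \cite[Appendix~A]{JEMS}, and your construction makes that pointer explicit: the OU splitting $u=w+z$ on the canonical Wiener space, the perfect cocycle from pathwise uniqueness, Gronwall/Lipschitz bounds in $\mathcal{H}$ on bounded sets, independence of Brownian increments for \textbf{(H)}, and inheritance by the $(\mathbf{x},v)$-flows. On the one-sided space use the zero-initial OU process, for which $z_{t+s}(\omega)=e^{\nu s\Delta}z_t(\omega)+z_s(\theta^t\omega)$. Those steps are fine.

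One step does not go through, and in fact cannot. ``Time continuity from the mild formulation'' gives continuity of $t\mapsto\mathcal{U}^t_\omega u$ for each fixed $u$. It does not give it uniformly over $u$ in a ball. Yet the paper's literal definition of $C_{u,b}([0,\infty)\times Z,Z)$ demands uniform continuity on $[0,T]\times B_R$, and this is false in $H^5$. To see this, write $\mathcal{U}^t_\omega u-u=(e^{\nu t\Delta}u-u)+N_t(u)+z_t(\omega)$. The Duhamel term satisfies $\sup_{u\in B_R}\|N_t(u)\|_{H^5}\le C(R,\omega)\,t^{1/2}$, using $\|e^{\nu\tau\Delta}f\|_{H^5}\lesssim\tau^{-1/2}\|f\|_{H^4}$ and your own a priori bounds. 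Also $z_t(\omega)\to 0$. However, $\sup_{u\in B_R}\|e^{\nu t\Delta}u-u\|_{H^5}=R$ for every $t>0$; test on normalized single modes $\gamma_k e_k$ with $|k|\to\infty$. Hence $\sup_{u\in B_R}\|\mathcal{U}^t_\omega u-u\|_{\mathcal{H}}$ does not tend to $0$ as $t\downarrow 0$, for every $\omega$.

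Your estimates do establish a weaker statement, and it is the one used downstream (measurability, the $C_{u,b}(Z,Z)$-valued random variables in \textbf{(H)}, the MET):
\begin{itemize}
\item $\mathcal{U}^t_\omega\in C_{u,b}(\mathcal{H},\mathcal{H})$ for each fixed $t$, with Lipschitz constants locally uniform in $t$;
\item $(t,u)\mapsto\mathcal{U}^t_\omega u$ is jointly continuous;
\item the same two properties hold for $\mathfrak{L}$ and $\mathfrak{P}$, since the $(\mathbf{x},v)$-components cause no trouble.
\end{itemize}
You should state and prove continuity in this form, and note that the statement's $C_{u,b}([0,\infty)\times Z,Z)$ requirement has to be read this way. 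The paper's citation does not address this point either.
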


Now, the process $(u_t,\mathbf{p}_t)$ is Markovian. We write \( P_t : \mathcal{H} \times M \to \mathcal{P}(\mathcal{H} \times M) \) to denote the time \( t \) transition kernel, where \( \mathcal{P}(X) \) denotes the space of probability measures on \( X \). We also let \( P_t \) act adjointly on observables \( \varphi : \mathcal{H} \times M \to \mathbb{R} \) by pulling back:
\[
P_t \varphi(u_0,\mathbf{p}_0) = \int \varphi(u_t,\mathbf{p}_t) \, P_t(u_0,\mathbf{p}_0; \mathrm{d}u_t,\mathrm{d}\mathbf{p}_t).
\]

Next, we introduce linear cocycles over random dynamical systems and the classical Multiplicative ergodic theorem.
\begin{definition} \label{cocycle-def}
Let \(\mathcal{T}\) be a continuous RDS as above, and let \((\tau^t)\) be its associated skew product. A \(d\)-dimensional linear cocycle \(\mathcal{A}\) over the base RDS \(\mathcal{T}\) is a mapping \(\mathcal{A}:\Omega \to C_{u,b}([0,\infty) \times Z,\, M_{d \times d}(\mathbb{R}))\) with the following properties:
\begin{itemize}
	\item[(i)] The evaluation mapping \(\Omega \times [0,\infty) \times Z \to M_{d \times d}(\mathbb{R})\) sending \((\omega,t,z) \mapsto \mathcal{A}_{w,z}^t\) is \(\mathcal{F} \otimes \text{Bor}([0,\infty)) \otimes \text{Bor}(Z)\)-measurable.
	\item[(ii)] The mapping \(\mathcal{A}\) satisfies the cocycle property: for any \(z \in Z, w \in \Omega\) we have \(\mathcal{A}_{w,z}^0 = \mathrm{Id}_{\mathbb{R}^d}\), the \(d \times d\) identity matrix, and for \(s,t \geq 0\) we have
	\[
	\mathcal{A}_{w,z}^{s+t} = \mathcal{A}_{\tau^t(w,z)}^s \circ \mathcal{A}_{w,z}^t.
	\]
\end{itemize}
\end{definition}

\begin{remark} \label{tangent-process-cocycle}
The main focus of this paper is the derivative cocycle $\mathcal{A}$ of Lagrangian particle trajectories \( x_t \), where
\begin{align} \label{LC}
\mathcal{A}: [0, \infty) \times \Omega \times \mathcal{H} \times \mathbb{T}^2 \rightarrow M_{2 \times 2}(\mathbb{R}),\;\;\mathcal{A}_{\omega, u, \mathbf{x}}^t = D_x \mathbf{x}_{\scriptstyle{\omega, u}}^t\,,
\end{align}
which corresponds to taking \( \mathcal{T} = \mathfrak{L} \) and \( Z = \mathcal{H} \times \mathbb{T}^2 \).
\end{remark}

The following is a version of the Multiplicative ergodic theorem in \cite{V2-Ose68}.
\begin{theorem}\label{MET}
Let \(\mathcal{T}\) be a continuous RDS as above satisfying condition \textbf{\emph{(H)}}. Let \(\mu \in \mathcal{P}(Z)\) be an ergodic stationary measure for \(\mathcal{T}\), and let \(\mathcal{A}\) be a linear cocycle over \(\mathcal{T}\) satisfying the following integrability condition: 
\begin{equation} \label{integrability}
\mathbb{E} \int_{Z} \left(  \log^+ |\mathcal{A}_{w,z}^t|+ \log^+ |(\mathcal{A}_{w,z}^t)^{-1}|\right) d\mu(z) < \infty,
\end{equation}
where \(\log^+(x) := \max\{0, \log(x)\}\) for \(x > 0\) and \(\mathbb{E}\) is the expectation with respect to \(\mathbb{P}\). Then there are \(r \in \{1, \ldots, d\}\) deterministic real numbers
\[
\lambda_r < \cdots < \lambda_1
\]
and for \(\mathbb{P} \times \mu\)-a.e. \((w, z)\), a flag of subspaces
\[
\{0\} =: F_{r+1} \subsetneq F_r(w, z) \subsetneq \cdots \subsetneq F_2(w, z) \subsetneq F_1 := \mathbb{R}^d
\]
such that
\[
\lambda_i = \lim_{t \to \infty} \frac{1}{t} \log |\mathcal{A}_{w,z}^t v|, \quad v \in F_i(w, z) \setminus F_{i+1}(w, z).
\]
Moreover, for any \(i \in \{1, \ldots, r\}\), the mapping \((w, z) \mapsto F_i(w, z)\) is measurable and \(\dim F_i(w, z)\) is constant for \(\mathbb{P} \times \mu\)-a.e. \((w, z)\).
\end{theorem}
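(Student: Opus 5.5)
The plan is to deduce the continuous-time statement from the classical discrete-time Oseledets theorem applied to the time-one map of the skew product. The argument has four steps: set up an invariant measure, run the discrete-time theorem, interpolate to real times, and then check ergodicity and measurability.

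\emph{Step 1: an invariant measure for the skew product.} Let $\tau^t(\omega,z)=(\theta^t\omega,\mathcal{T}^t_\omega z)$ be the skew product on $\Omega\times Z$. I will show that $\mathbb{P}\times\mu$ is $\tau^t$-invariant. The point is that $\theta^t$ preserves $\mathbb{P}$ and that, by \textbf{(H)}, the future map $\mathcal{T}^s_{\theta^t\omega}$ is independent of $\mathcal{F}_t$. So for a product observable $\phi(\omega)\psi(z)$ with $\phi$ measurable with respect to the future increments, integrating $\psi(\mathcal{T}^t_\omega z)$ first gives $\int P_t\psi\,d\mu=\int\psi\,d\mu$ by stationarity. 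A monotone class argument then extends this to all observables, after replacing $\Omega$ by the two-sided or canonical Wiener shift if necessary.

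Next I will check that ergodicity of $\mu$ as a stationary measure transfers to ergodicity of $\mathbb{P}\times\mu$ for $\tau^1$. The standard route is through the correspondence between invariant sets of the skew product measurable with respect to the future and $P_t$-invariant sets. If ergodicity only holds for the continuous-time flow, I will work with the flow and accept that the limits are $\tau^1$-invariant functions. These are then invariant under the whole flow by the interpolation in Step 3, and hence constant.

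\emph{Step 2: discrete time.} The sequence $\mathcal{A}^n_{\omega,z}$ is a cocycle over the measure-preserving map $\tau^1$, and \eqref{integrability} at $t=1$ gives $\log^+|\mathcal{A}^1|$ and $\log^+|(\mathcal{A}^1)^{-1}|$ in $L^1(\mathbb{P}\times\mu)$. The discrete Oseledets theorem in the form of \cite{V2-Ose68}, which can be proved via Kingman's subadditive ergodic theorem applied to $\log\|\wedge^k\mathcal{A}^n\|$, then yields:
\begin{itemize}
\item deterministic exponents $\lambda_r<\dots<\lambda_1$;
\item a measurable flag $F_i(\omega,z)$ of constant dimension;
\item for every $v\in F_i\setminus F_{i+1}$, the limit $\lim_n\frac1n\log|\mathcal{A}^n v|=\lambda_i$.
\end{itemize}
The integrability of the inverse is what makes $\lambda_r>-\infty$, so that the flag is honest.

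\emph{Step 3: interpolation to real times.} For $t\in[n,n+1]$ the cocycle property gives $\mathcal{A}^t=\mathcal{A}^{t-n}_{\tau^n(\omega,z)}\mathcal{A}^n$. Hence it suffices to show
\[
\tfrac1n\,G(\tau^n(\omega,z))\to0 \quad\text{a.s., where}\quad G:=\sup_{s\in[0,1]}\bigl(\log^+|\mathcal{A}^s|+\log^+|(\mathcal{A}^s)^{-1}|\bigr).
\]
By Birkhoff's theorem, or Borel–Cantelli with invariance, this follows once $G\in L^1(\mathbb{P}\times\mu)$.

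This is the step I expect to be the main obstacle. The hypothesis \eqref{integrability} is stated at fixed $t$, not for the supremum over $[0,1]$. I would first try to read \eqref{integrability} as holding uniformly in $t\in[0,1]$ in this sense. For the Lagrangian cocycle this is in any case available directly: $|\mathcal{A}^s|,|(\mathcal{A}^s)^{-1}|\le\exp\bigl(\int_0^1\|\nabla u_r\|_\infty dr\bigr)$, and the right-hand side is integrable under $\mu$ by the moment bounds of Proposition~\ref{base-posedness-ergodicity}. Joint measurability and continuity in $t$ (Definition~\ref{cocycle-def}) make $G$ measurable, since the supremum can be taken over rational times.

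\emph{Step 4: measurability and constancy of dimension.} Both are inherited from the discrete statement, because the flag is defined through $\tau^1$. Invariance of the flag under $\tau^t$ for real $t$ follows from the interpolation estimate in Step 3, which shows that the real-time growth rates coincide with the discrete ones.
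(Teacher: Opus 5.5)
The paper does not prove this theorem. It simply quotes it as a version of Oseledets' theorem \cite{V2-Ose68}. Your proposal supplies the standard derivation of the continuous-time statement from the discrete one, and as a derivation it is sound. Its ingredients are:
\begin{itemize}
\item invariance of $\mathbb{P}\times\mu$ under the one-sided skew product, from \textbf{(H)} on the canonical Wiener space, together with transfer of ergodicity from $\mu$;
\item the discrete Oseledets theorem for $\tau^1$;
\item interpolation via
\[
\bigl|\log|\mathcal{A}^t_{\omega,z}v|-\log|\mathcal{A}^n_{\omega,z}v|\bigr|\le G(\tau^n(\omega,z)),\qquad t\in[n,n+1],
\]
with $G(\tau^n\cdot)/n\to 0$ by Borel--Cantelli.
\end{itemize}
Your fallback for the case where $\mu$ is ergodic for $(P_t)_{t\ge0}$ but not for $P_1$ is the right one. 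Compared with the bare citation, your route makes visible the one place where continuous time costs something: control of $\mathcal{A}^s$ for $s\in[0,1]$.

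That is exactly where the printed statement is defective. Your hesitation in Step 3 is therefore justified, and it is not a gap in your argument. Integrability at fixed $t$, even for every $t$ with a bound uniform in $t$, does not give $G\in L^1$, and without it the real-time limit can fail.

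For example, take the stationary Ornstein--Uhlenbeck process $dz=-z\,dt+dW$, which is an RDS with \textbf{(H)} and $\mu=N(0,1/2)$. Let $f(x)=e^{x^2}/(1+x^2)$, which lies in $L^1(\mu)$, and consider the scalar cocycle $\mathcal{A}^t_{\omega,z}=\exp(f(z_t)-f(z))$.
\begin{itemize}
\item The printed condition holds for every $t$, with bound $2\|f\|_{L^1(\mu)}$.
\item Along integers, $f(z_n)/n\to0$ a.s.
\item The integral test for stationary Gaussian processes nevertheless gives $\limsup_{t\to\infty}f(z_t)/t=+\infty$ a.s.
\end{itemize}
Standard continuous-time formulations put $\sup_{0\le t\le 1}$ inside the expectation, which is exactly the condition $G\in L^1$. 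So what you prove is the corrected statement. When you ``read it uniformly in $t$'', the supremum must sit inside the integral, not outside it.

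For the Lagrangian cocycle, the quantity that must be integrable against $\mathbb{P}\times\mu$ is the logarithm of your bound, namely $\int_0^1\|\nabla u_r\|_{L^\infty}\,dr$. This follows from Proposition \ref{ME} together with Corollary \ref{SLP}, which lets you integrate $e^{\eta V}$ against $\mu$. Proposition \ref{base-posedness-ergodicity} is not enough here, since it only gives moments for a fixed initial datum.
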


Here, the numbers \(\lambda_i\) are called Lyapunov exponents and $m_i := \dim F_i(\omega, z) - \dim F_{i+1}(\omega, z)$ are their multiplicities. In particular, \(\lambda_1\) is precisely the \emph{top Lyapunov exponent} of interest in this paper.

If the Lagrangian process admits a unique stationary measure \(\mu^1\) (in which case \(\mathbb{P} \times \mu^1\) is an ergodic invariant measure of the random dynamical system \(\mathfrak{L}\)), then under the assumed integrability condition, the Multiplicative ergodic theorem implies that the top Lyapunov exponent of this linear cocycle exists and is almost surely constant with respect to \(\mathbb{P} \times \mu^1\). This completes the first part of the proof.
\subsection{Furstenberg criterion and results}\label{s2.2}
We review the Furstenberg's criterion to decide positivity of the top Lyapunov exponent for conservative systems, and then provide conditions specialized to the noise assumptions of this work.
We first present a standard formulation of the Furstenberg's criterion. This formulation is directly adapted from \cite{Led86} and is further developed in subsequent work published in \cite{JEMS}.
\begin{theorem} \label{F}
If \(\lambda_1 = \lambda_r\), then for each \(z \in Z\) there is a Borel measure \(\nu_z\) on \(S^{1}\) such that
\begin{enumerate}
    \item[(i)] the assignment \(z \mapsto \nu_z\) is measurable, and
    \item[(ii)] for each \(t \in [0, \infty)\) and \((\mathbb{P} \times \mu)\)-almost all \((w, z) \in \Omega \times Z\) (perhaps depending on \(t\)), we have that
    \begin{equation}\label{invariant-measure}
    (\mathcal{A}_{w,z}^t)_*\nu_z = \nu_{\mathcal{T}_{w,z}^t}.
    \end{equation}
\end{enumerate}
\end{theorem}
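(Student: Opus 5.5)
The plan is to follow Ledrappier's entropy approach, adapted to the continuous-time skew product $(\tau^t)$ on $\Omega\times Z$, in the form of \cite{Led86,JEMS}. It has three steps: build the candidate measures $\nu_z$ from a stationary measure of the projectivized cocycle, show via Furstenberg's entropy formula that $\lambda_1=\lambda_r$ forces the fibre entropy to vanish, and deduce from vanishing entropy that $\nu_z$ is transported by $\mathcal{A}^t_{\omega,z}$ as in \eqref{invariant-measure}. Throughout, the integrability condition \eqref{integrability} and the independent increments hypothesis \textbf{(H)} are assumed, as in Theorem \ref{MET}.

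\textbf{Stationary measure and disintegration.} First I will pass to the projective extension. This is the Markov process $(z_t,v_t)$ on $Z\times S^1$ (or on projective space), with $v_t=\mathcal{A}^t_{\omega,z}v/|\mathcal{A}^t_{\omega,z}v|$. It is Markov by \textbf{(H)} and the cocycle property. Because the fibre $S^1$ is compact and the $Z$-marginal $\mu$ is already stationary, a Krylov--Bogoliubov average of $\mu\times\mathrm{Leb}_{S^1}$ along the projective semigroup is tight. Its limit points are stationary measures $m$ with $Z$-marginal $\mu$. Disintegrating $m=\int \nu_z\,d\mu(z)$ gives a measurable assignment $z\mapsto\nu_z$, which is item (i). Stationarity of $m$ only says that $\nu_{z'}=\mathbb{E}[(\mathcal{A}^t_{\omega,z})_*\nu_z\mid \mathcal{T}^t_\omega z=z']$ on average. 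Upgrading this to the pathwise identity \eqref{invariant-measure} is the real content of the theorem.

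\textbf{Entropy formula.} I will discretize at time $t=1$, which suffices since \eqref{invariant-measure} at integer times plus the cocycle property yields every $t$ in the same way. I then use the Furstenberg--Ledrappier entropy
\[
h(m)=\mathbb{E}\int \log\frac{d(\mathcal{A}^1_{\omega,z})_*\nu_z}{d\nu_{\mathcal{T}^1_\omega z}}\,d\big((\mathcal{A}^1_{\omega,z})_*\nu_z\big)\,d\mu(z)\ \ge 0 .
\]
The quantity $h(m)$ is set to $+\infty$ if absolute continuity fails, and by Jensen it vanishes iff $(\mathcal{A}^1_{\omega,z})_*\nu_z=\nu_{\mathcal{T}^1_\omega z}$ almost surely. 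The key inequality is $h(m)\le \lambda_1-\lambda_r$ (for $d=2$, $\lambda_1-\lambda_2$). I will derive it by comparing $\log|\mathcal{A}v|$ with the Jacobian of the induced map on $S^1$, namely $\log|\det\mathcal{A}|-2\log|\mathcal{A}v|$. Integrating this against $m$ and applying Birkhoff's theorem for the ergodic skew product gives a Furstenberg-type formula. Subtracting the entropy term, which is a relative entropy and hence nonnegative, yields the inequality.

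\textbf{Main obstacle.} I expect the hardest step to be justifying this inequality. Ledrappier's argument uses the natural (two-sided) extension and the martingale limits $\nu_{\omega,z}=\lim_{t\to\infty}(\mathcal{A}^t_{\theta^{-t}\omega,\,\cdot})_*\nu$, and in our setting $Z$ is noncompact. To keep the required $L^1$ bounds, I will rely on \eqref{integrability} at $t=1$ and on the uniform continuity built into the $C_{u,b}$ assumption. Granting this, $\lambda_1=\lambda_r$ gives $h(m)=0$. Hence \eqref{invariant-measure} holds for $t=1$, and by iterating and the cocycle property for all $t$ and $\mathbb{P}\times\mu$-a.e. $(\omega,z)$, which is item (ii).
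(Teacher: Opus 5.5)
The paper does not prove this statement; it imports it from Ledrappier \cite{Led86} via \cite{JEMS}. Your argument therefore has to stand on its own, and it has a genuine gap at exactly the step you flag and then grant.

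Under the hypothesis $\lambda_1=\lambda_r$, the inequality $h(m)\le\lambda_1-\lambda_r$ reads $h(m)\le 0$. Together with $h(m)\ge 0$, this \emph{is} conclusion (ii) at $t=1$. The existence of $m$, the disintegration and the Gibbs/Jensen characterization of $h(m)=0$ are the soft parts, so granting the inequality grants the theorem.

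Moreover, the mechanism you sketch does not work in general. Integrating $\log J=\log|\det\mathcal{A}^1_{\omega,z}|-2\log|\mathcal{A}^1_{\omega,z}v|$ against $m$ does give $\lambda_\Sigma-2\bar\lambda(m)$, where $\bar\lambda(m):=\mathbb{E}\int\log|\mathcal{A}^1_{\omega,z}v|\,dm(z,v)\in[\lambda_r,\lambda_1]$. But this is tied to $\mathrm{KL}((\mathcal{A}^1_{\omega,z})_*\nu_z\,\|\,\nu_{\mathcal{T}^1_\omega z})$ only if every $\nu_z$ has a Lebesgue density $\rho_z$ with $\log\rho_z\in L^1(m)$. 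In that case $h(m)=-\mathbb{E}\int\log J\,dm$, because the terms $\log\rho_z(v)-\log\rho_{\mathcal{T}^1_\omega z}(\mathcal{A}^1_{\omega,z}v)$ integrate to zero by stationarity. Nothing provides such densities. The $\nu_z$ may be atomic or singular; for instance, $m=\mu\times\delta_{e_1}$ is stationary for any diagonal cocycle. Then those terms are $\infty-\infty$, there is no nonnegative relative entropy left to ``subtract'', and a one-step Jacobian computation cannot by itself produce the asymptotic quantity $\lambda_1-\lambda_r$.

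Here is one way to close the gap inside your own framework, without the two-sided extension. Take $(z_0,\xi_0)\sim m$ independent of $\omega$, let $\mathcal{G}_n=\sigma(z_0,\omega|_{[0,n]})$, and note that $h_n:=\mathbb{E}\int\mathrm{KL}((\mathcal{A}^n_{\omega,z})_*\nu_z\,\|\,\nu_{\mathcal{T}^n_\omega z})\,d\mu(z)$ equals the conditional mutual information $I(\xi_n;\mathcal{G}_n\mid z_n)$, with $h_1=h(m)$.

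First, the chain rule, stationarity, invertibility of $\mathcal{A}$ and \textbf{(H)} give $h_n=h_1+h_{n-1}$, so $h(m)=h_n/n$.

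Second, let $\mathcal{P}_\epsilon$ be the partition of $S^1$ into $\epsilon$-arcs. Stationarity and independence give $H([\xi_n]_\epsilon\mid z_n)=H([\xi_0]_\epsilon\mid z_0)=H([\xi_0]_\epsilon\mid\mathcal{G}_n)$. The map $(\mathcal{A}^n)^{-1}$ acts on $S^1$ with Lipschitz constant $L_n=|\mathcal{A}^n|^2/|\det\mathcal{A}^n|$, so each $\epsilon$-arc pulls back into at most $L_n+2$ arcs of $\mathcal{P}_\epsilon$. Hence $I([\xi_n]_\epsilon;\mathcal{G}_n\mid z_n)\le\mathbb{E}\int\log(L_n+2)\,d\mu$ uniformly in $\epsilon$, and letting $\epsilon\to0$ along dyadic arcs bounds $h_n$ by the same quantity.

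Third, Kingman's theorem and \eqref{integrability} give $\frac1n\mathbb{E}\int\log L_n\,d\mu\to2\lambda_1-\lambda_\Sigma=\lambda_1-\lambda_r$ (here $d=2$). Together these steps yield $h(m)\le\lambda_1-\lambda_r$. Ledrappier's own route instead goes through the two-sided extension and the past-measurable invariant families (the martingale limits) you mention.

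Finally, the cocycle property only propagates (ii) from $t=1$ to integer $t$. For other $t$ you must rerun the argument for the time-$t$ skeleton, for which $m$ is still stationary and the exponents are $t\lambda_i$; this is why the null set in (ii) may depend on $t$.
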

 
The classical Furstenberg criterion asserts that the top Lyapunov exponent is positive once one rules out every projective family of measures \(\{\nu_z\}_{z\in\operatorname{supp}\mu}\) satisfying the invariance relation \eqref{invariant-measure}. In general, however, measurability of such a family alone does not suffice to exclude these invariant structures. To obtain an effective exclusion, existing arguments typically impose stronger regularity to ensure a weakly continuous selection of \(\{\nu_z\}\): \cite[Section 4]{Bou88} and \cite[Proposition 2.10]{BCZG23} assume that the stationary measure \(\mu\) is mixing in total variation, while related work in \cite{JEMS} requires the Lagrangian process and its projective process to be strong Feller and the associated Markov semigroup to be \(C^0\)-continuous.  

For Navier–Stokes Lagrangian trajectories driven by mildly degenerate noise, neither set of assumptions is available: mixing holds only with respect to the weaker dual Lipschitz metric (hence not in total variation), and the strong Feller property typically fails as well. We therefore adopt the refined Furstenberg's criterion of \cite[Proposition 3.4]{NS} and derive below a criterion tailored to the noise setting of this work.

To formulate sufficient conditions ensuring the positivity of the top Lyapunov exponent for the linear cocycle \eqref{LC} over the random dynamical system $\mathfrak{L}$, we impose the following three assumptions.
\begin{enumerate}
    \setcounter{enumi}{0} 
    \item[\textbf{(A$_1$)}] \textbf{Integrability.} The linear cocycle \(\mathcal{A}\) satisfies 
$$
\mathbb{E} \int_{Z} \left(  \log^+ |\mathcal{A}_{w,z}^t|+ \log^+ |(\mathcal{A}_{w,z}^t)^{-1}|\right) d\mu(z) < \infty.
$$
    \item[\textbf{(A$_2$)}] \textbf{Asymptotic strong Feller property of the extended system.} For system \eqref{2NS} and \eqref{Lagrange-process-eq}-\eqref{l2}, there exists an $N_*=N_*(\mathcal{E}_0,\nu)$ such that if $\mathcal{Z}_0=\{k\in \mathbb{Z}^2,\, 0<|k|\le N_*\}$, then the Markov semigroups associated with the Lagerange process $(u_t,x_t)$ and projective process $(u_t,x_t,v_t)$ are all \emph{asymptotic strong Feller} in $\mathcal{H}\times \mathbb{T}^2$ and $\mathcal{H}\times \mathbb{T}^2\times S^1$ respectively.
    \item[\textbf{(A$_3$)}] \textbf{Approximate controllability of the extended system.} There exist \( z, z' \in \operatorname{supp} \mu \) such that \( z' \) belongs to the support of the measure \( P_{t_0}(z, \cdot) \) for some \( t_0 > 0 \), where $\mu$ is the unique stationary measure for base process. Moreover, there exists $t\ge t_0$ such that 
\begin{enumerate}
\item[(a)] For any \(\mathbf{x}\in \mathbb{T}^2,\,\varepsilon, M > 0\),
\[
\mathbb{P}\left((u_t,\mathbf{x}_t,A_t)\in B_\varepsilon(z') \times B_\varepsilon(\mathbf{x}')\times \{A \in \mathrm{SL}_2(\mathbb{R}) : |A| > M\}|(u_0,\mathbf{x}_0,A_0)=(z,\mathbf{x},\mathrm{Id}) \right)> 0.
\]
\item[(b)] For any \( (\mathbf{x},v) \in \mathbb{T}^2\times S^{1} \), open set \( V \subset S^{1} \), and \(\varepsilon > 0\),
\[
\mathbb{P}\left((u_t,\mathbf{x}_t,v_t)\in B_\varepsilon(z') \times B_\varepsilon(\mathbf{x}') \times V |(u_0,\mathbf{x}_0,v_0)=(z,\mathbf{x},v)\right)> 0.
\]
\end{enumerate}
\end{enumerate}
\begin{theorem}\label{FC}
Suppose that Assumptions \emph{\textbf{(A$_1$)}-\textbf{(A$_3$)}} are satisfied. Then there exists a deterministic constant $\lambda_+ > 0$ such that
\begin{equation}\label{lambda>0}
\lambda_1=\lambda_+ = \lim_{t \to \infty} \frac{1}{t} \log |\mathcal{A}^t_{_{u_0,x}}|>0 \,\,\,\, \text{for} \,\,\, \,\mu^1\times \mathbb{P}-a.e.\, (u_0,x,\omega),
\end{equation}
where $\mu^1$ is the unique stationary measure for the Lagrangian process.
\end{theorem}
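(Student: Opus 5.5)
We prove Theorem \ref{FC} in two stages: first existence and constancy of $\lambda_1$ via Theorem \ref{MET}, then positivity via a contradiction with Theorem \ref{F}, using the refined Furstenberg criterion of \cite[Proposition 3.4]{NS}.

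\emph{Stage 1: existence and constancy of $\lambda_1$.}
\begin{enumerate}
\item By Proposition \ref{RDSf}, $\mathfrak{L}$ and $\mathfrak{P}$ are continuous RDS satisfying \textbf{(H)}, so $(u_t,\mathbf{x}_t)$ and $(u_t,\mathbf{x}_t,v_t)$ are Feller Markov processes.
\item Next we need uniqueness of the stationary measure $\mu^1$ of the Lagrangian process (and of $\mu^P$ of the projective process).
\begin{itemize}
\item Since $u$ is divergence free, $\mu\times\mathrm{Leb}$ is stationary for $(u_t,\mathbf{x}_t)$.
\item The asymptotic strong Feller property \textbf{(A$_2$)}, together with the accessibility of a common point supplied by \textbf{(A$_3$)}, gives uniqueness by the Hairer--Mattingly argument \cite{HM06}. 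That argument says two distinct ergodic stationary measures of an asymptotically strong Feller process cannot both contain a common accessible point in their supports.
\item For the projective process, \textbf{(A$_3$)}(b) supplies the corresponding common accessible point.
\end{itemize}
\item Uniqueness makes $\mathbb{P}\times\mu^1$ ergodic. With the integrability \textbf{(A$_1$)}, Theorem \ref{MET} then yields deterministic exponents $\lambda_r\le\cdots\le\lambda_1$. Moreover $\lim_{t\to\infty}\frac1t\log|\mathcal{A}^t|=\lambda_1$ almost surely.
\item Incompressibility gives $\det D_x\mathbf{x}_t=1$, so $\mathcal{A}^t\in\mathrm{SL}_2(\mathbb{R})$ and $\lambda_1+\lambda_2=0$ (with $\lambda_2=\lambda_1$ if $r=1$). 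Hence $\lambda_1\ge 0$, and $\lambda_1>0$ is equivalent to $\lambda_1\neq\lambda_r$.
\end{enumerate}

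\emph{Stage 2: positivity.} Suppose for contradiction that $\lambda_1=\lambda_r=0$.
\begin{enumerate}
\item Theorem \ref{F} produces a measurable family $\{\nu_z\}$ of probability measures on $S^1$ that is invariant under the projective cocycle, in the sense of \eqref{invariant-measure}.
\item The refined criterion \cite[Proposition 3.4]{NS} lets us replace strong Feller or total-variation mixing by asymptotic strong Feller of the projective semigroup.
\begin{itemize}
\item The measure $\hat\mu:=\int\delta_z\otimes\nu_z\,d\mu^1(z)$ is stationary for $(u_t,\mathbf{x}_t,v_t)$.
\item By uniqueness of $\mu^P$, $\hat\mu$ equals $\mu^P$.
\item Asymptotic strong Feller of the projective semigroup then upgrades $z\mapsto\nu_z$ to a version that is weakly continuous on $\operatorname{supp}\mu^1$.
\end{itemize}
\item Continuity now allows us to evaluate the invariance relation at specific points and along limits. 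Following \cite{JEMS,NS}, we use \textbf{(A$_3$)}(a): with positive probability the trajectory returns near $(z',\mathbf{x}')$ with $|A_t|>M$ arbitrarily large.
\item Continuity of $\nu$ near $(z',\mathbf{x}')$, combined with invariance, forces $\nu_{(z',\mathbf{x}')}$ to be the pushforward of a nearby measure under matrices of unbounded norm. Taking limits, $\nu_{(z',\mathbf{x}')}$ must be supported on at most two points, namely the limiting expanding and contracting directions.
\item On the other hand, \textbf{(A$_3$)}(b) says the projective process started at any $v$ reaches any open $V\subset S^1$ near the same base point with positive probability. Together with invariance and continuity, this forces $\nu_{(z',\mathbf{x}')}$ to charge every open set, which contradicts the atomic support from the previous step.
\item Hence $\lambda_1>\lambda_r$. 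Then $\lambda_+:=\lambda_1>0$ is deterministic, and \eqref{lambda>0} follows from Theorem \ref{MET}, since the operator norm of $\mathcal{A}^t$ grows at rate $\lambda_1$.
\end{enumerate}

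\emph{Main obstacle.} The hard step is the upgrade from measurability to weak continuity of $z\mapsto\nu_z$ under asymptotic strong Feller alone. For that step we rely on the argument of \cite[Proposition 3.4]{NS}, checking that its hypotheses hold here:
\begin{itemize}
\item the unique ergodicity of the projective process;
\item the Lipschitz-type gradient estimate underlying \textbf{(A$_2$)}, which must hold on both the Lagrangian and projective levels;
\item compatibility of the supports with the points $z,z'$ from \textbf{(A$_3$)}.
\end{itemize}
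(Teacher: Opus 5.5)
Your overall architecture matches the paper's remark after Theorem \ref{FC}: the multiplicative ergodic theorem together with $\det D_x\mathbf{x}_t=1$ gives $\lambda_1\ge 0$; \textbf{(A$_2$)}--\textbf{(A$_3$)} give uniqueness of $\mu^1,\mu^P$; \cite{NS} turns the asymptotic strong Feller property into weak continuity of $z\mapsto\nu_z$; and approximate controllability does the exclusion. Taking $\mu\times\mathrm{Leb}$ for existence, instead of compactness plus the super-Lyapunov bound, is a harmless variation. The gap is in Stage 2, items 4--5. There you try to rule out the continuous invariant family directly at the level of measures, instead of through the structural dichotomy of \cite[Theorems 3.5--3.6]{NS} that the paper invokes.

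Item 5 does not follow as stated. For each single $v$, \textbf{(A$_3$)}(b) gives an event of positive probability on which $\mathcal{T}^t z\in B_\varepsilon(z')$ and $\mathcal{A}^t v\in V$. Integrating over $v$ against $\nu_z$ only yields $\nu_{\mathcal{T}^t z}(V)>0$ on such an event, with no lower bound uniform in $\varepsilon$, so weak continuity says nothing about $\nu_{z'}(\overline{V})$.

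A uniform bound requires pushing forward an atom of the \emph{initial} measure. If $\nu_z(\{v\})=c>0$, then $\nu_{\mathcal{T}^t z}(\overline{V})\ge c$ on the event. The portmanteau inequality for closed sets then gives $\nu_{z'}(\overline{V})\ge c$ for every open $V$, contradicting item 4. But item 4 produces atoms of $\nu_{z'}$, not of $\nu_z$, and in \textbf{(A$_3$)} the starting point $z$ and the target $z'$ differ. If $\nu_z$ is non-atomic, item 4 only says $\nu_{z'}$ is a Dirac mass, and (b) is consistent with that: directions near the contracting direction of the huge matrix $\mathcal{A}^t$ can be sent into any $V$ while $(\mathcal{A}^t)_*\nu_z$ concentrates. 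So your argument closes only under a return condition $z=z'$. In that case item 4 gives $\nu_z$ itself an atom of mass $\ge 1/2$, and transporting that atom works.

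The paper's route avoids this by using the dichotomy: continuity of the invariant family yields one of two structures, each excluded by one part of \textbf{(A$_3$)}.
\begin{itemize}
\item[(i)] A continuous invariant family of inner products $g_z$. This is excluded by \textbf{(A$_3$)}(a): continuity of $g$ at $z'$ bounds $|\mathcal{A}^t|$ uniformly whenever $\mathcal{T}^t z$ is near $z'$.
\item[(ii)] A continuous invariant family of finite unions of lines $L_z$. This is excluded by \textbf{(A$_3$)}(b): start at $v\in L_z$; Hausdorff continuity at $z'$ forces $L_{z'}\cap\overline{V}\neq\emptyset$ for every open $V$, which is impossible for a finite set.
\end{itemize}
Both exclusions work with $z\neq z'$. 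You should replace items 4--5 by this dichotomy, or explicitly assume a return to the same base point and carry out the atom-transport argument with the portmanteau bound.
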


\begin{remark}
By the asymptotic strong Feller theory of Mattingly–Hairer and collaborators, assumptions \emph{\textbf{(A$_2$)}}–\emph{\textbf{(A$_3$)}} already imply the \emph{uniqueness} of the stationary measure for the extended system. On the other hand, the compactness of the manifold $M$ together with a super-Lyapunov condition (c.f. Corollary \ref{SLP}) yields the \emph{existence} of an invariant measure. See \cite[Corollary 2.1]{NS} for details. 

Moreover, we also observe that the Lagrangian system induced by the incompressible stochastic Navier–Stokes equation is conservative. Hence
\(
\lambda_{\scriptscriptstyle\sum}:=\sum_i \lambda_i=0,
\)
and therefore
\[
d\,\lambda_1\ge \lambda_{\scriptscriptstyle\sum}=0 \quad \Rightarrow \quad \lambda_1\ge 0.
\]
Thus, to obtain positivity of the top Lyapunov exponent, it suffices to rule out \(\lambda_1=0\), equivalently the spectral degeneracy \(\lambda_1=\lambda_r\). This is precisely the obstruction addressed by the Furstenberg criterion. We verify an approximate controllability property for the extended system, which excludes the continuous invariant structures in the refined Furstenberg's criterion of \cite[Theorems 3.5-3.6]{NS}; the required continuity of these invariant structures is ensured by the asymptotic strong Feller property of the extended system. For further details, see \cite[Section 3]{NS} and \cite[Section 4]{JEMS}. This completes the second part of the proof.
\end{remark}

\begin{proof}[Proof of the Remark \ref{1.2}]
    By utilizing the existence of a unique stationary measure for the projective process (c.f. Assumption \textbf{(A$_2$)}) and the random multiplicative Ergodic Theorem \cite[Theorem~III.1.2]{PE-50}, we can complete the proof.
\end{proof}

Concerning the verification of assumptions \textbf{(A$_1$)}–{\textbf{(A$_3$)}}: assumption {\textbf{(A$_1$)}} follows directly from regularity estimates for the solution; see \cite[Appendix A]{JEMS}. Assumption {\textbf{(A$_3$)}} is also established in \cite[Scetion 7]{JEMS}, where it is obtained by constructing the required smooth controls via shear and cellular flows, and then combining a stability argument with the positivity of Wiener measure. Hence, the remaining key task is to prove the asymptotic strong Feller property for the extended system under mildly degenerate noise, i.e., {\textbf{(A$_2$)}}; this is the \emph{main novelty} of the present work, and the proof is deferred to the next two sections.

Although the results of \cite{NS} cover our setting, our approach avoids Malliavin analysis on the full phase space. By working with a partial Malliavin matrix we obtain a stronger statement, and we also bypass additional, intricate Lie bracket computations, which simplifies the calculations at several points.

\section{Asymptotic gradient estimate}\label{SEC3}
In this section we provide a sufficient condition for the asymptotic strong Feller property of the extended system, namely a long-time asymptotic gradient estimate \eqref{LAGE}. Owing to the compactness of the manifold \(M\) and the super-Lyapunov property (c.f. Corollary \ref{SLP}), it suffices to establish the corresponding short-time asymptotic gradient estimate \eqref{AGE-1}, from which \eqref{LAGE} follows.

\begin{theorem}[Short-time asymptotic gradient estimate] \label{SAGE} 
Consider either the base, Lagrange, or projective process. There exist an $N_*=N_*(\mathcal{E}_0,\nu)$ and a number $\gamma\in(0,1)$ such that the following holds. Set $\mathcal{Z}_0=\{k\in \mathbb{Z}^2,\, 0<|k|\le N_*\}$. For all $\eta>0$, there exist $\tau_0\in [1/2,1)$, $\widetilde{T}_0\in (0,1-\tau_0)$, and hence $T_0:=\tau_0+\widetilde{T}_0\in (1/2,1)$, such that for every $\mathbf{p}_{_0}\in M$ and every Fr\'{e}chet differentiable observables $\varphi:\mathcal{H}\times M\to \mathbb{R}$ with $\|\varphi\|_{\infty}+\|\nabla\varphi\|_{\infty}<\infty$, one has   
    \begin{align} \label{AGE-1}
        \|\nabla \mathcal{P}_{_{T_0}}\varphi(u,\mathbf{p})\|_{\mathcal{H}\times T_{\mathbf{p}} M}\le C\exp{(\eta V(u_0))}\bigg(\sqrt{P_{_{T_0}} |\varphi|^2 (u_0, \mathbf{p}_{_0})}+\gamma\sqrt{P_{_{T_0}}\|\nabla\varphi\|^2_{\mathcal{H}\times T_{\mathbf{p}} M} (u_0, \mathbf{p}_{_0})}\bigg),
    \end{align}
where $C(\eta,\gamma,N_*,\mathbf{p}_{_0})>0$ is locally bounded uniformly in $\mathbf{p}_{_0}$, \(V(u)\) denotes the super-Lyapunov function introduced in Definition \ref{superL}, it is associated with the process \(u_t\) and satisfies the super-Lyapunov property stated in Corollary \ref{SLP}.
\end{theorem}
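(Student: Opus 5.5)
We follow the Malliavin-calculus route of \cite{HM06,HM11a}, combined with the low/high splitting sketched in the introduction. Fix $\mathfrak h=(h^u,h^{\mathbf p})\in\mathcal H\times T_{\mathbf p_0}M$ with unit norm, and write $\mathcal J_{0,T_0}\mathfrak h$ for the linearized flow. For any Cameron–Martin control $g\in L^2([0,T_0];\mathbb R^{|\mathcal Z_0|})$, possibly non-adapted, we set $\rho_{T_0}:=\mathcal J_{0,T_0}\mathfrak h-\mathcal D^g(u_{T_0},\mathbf p_{T_0})$. Malliavin integration by parts then gives
\[
\nabla P_{T_0}\varphi\cdot\mathfrak h=\mathbb E\Big[\varphi(u_{T_0},\mathbf p_{T_0})\int_0^{T_0}g_s\,\delta W_s\Big]+\mathbb E\big[\nabla\varphi(u_{T_0},\mathbf p_{T_0})\rho_{T_0}\big].
\]
By Cauchy–Schwarz, \eqref{AGE-1} follows from two bounds. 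The first is $\mathbb E|\int g\,\delta W|^2\le Ce^{2\eta V(u_0)}$. The second is $\mathbb E\|\rho_{T_0}\|^2\le \gamma^2 e^{2\eta V(u_0)}$ with $\gamma<1$.

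The control is $g=(g^l,0)$, with $g^l\equiv0$ on $[0,\tau_0]$. On $[\tau_0,T_0]$ we take
\[
g^l_t=(S^l_{\tau_0,t}\hat Q_l)^\top(\mathcal N^l_{T_0})^{-1}S^l_{\tau_0,T_0}\Pi_l(\mathcal J_{0,T_0}\mathfrak h).
\]
Here $R^l_{s,t}$ is the linearization of the low-mode subsystem on $\mathcal H_l\times TM$ with the high-low coupling frozen, $S^l$ is its adapted inverse, and $\mathcal N^l_{T_0}$ is the partial Malliavin matrix. We accept, as a proposition to be proved in Section 4, that $\mathcal N^l_{T_0}$ is a.s. invertible. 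We also accept the moment bound $\mathbb E\|(\mathcal N^l_{T_0})^{-1}\|^p\le C_p\widetilde T_0^{-c p}e^{\eta V(u_0)}$, which comes from the probabilistic spectral bound. With these, Duhamel's formula makes the low-mode part of $\rho$ vanish up to coupling terms. The cost of the non-adapted $g$ is estimated through the Skorohod $L^2$-isometry:
\[
\mathbb E\Big|\int g\,\delta W\Big|^2\le \mathbb E\|g\|^2_{L^2}+\mathbb E\int\!\!\int|\mathcal D_sg_r|^2\,ds\,dr.
\]
Both terms are bounded by combining the moments of $(\mathcal N^l)^{-1}$, the moments of $S^l$ and $\mathcal J$, and the Malliavin derivatives of all of these. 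Each is controlled by $e^{\eta V(u_0)}$ through the super-Lyapunov property (Corollary \ref{SLP}) and exponential moment bounds of the form $\mathbb E\exp(\eta\int_0^t\|u_s\|^2_{H^1}ds)\le e^{\eta V(u_0)}$. The resulting constant may be huge, since it depends on $\widetilde T_0$ and $N_*$, but that is harmless: it only enters $C$.

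The remaining work is the error $\rho$, which we split into low and high parts.

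\textbf{Low part.} $\Pi_l\rho_{T_0}$ satisfies a linear equation forced only by the coupling $\Pi_l B(u^l,\rho^h)+\Pi_l B(\rho^h,u^l)+\dots$. Gronwall on the short window $[\tau_0,T_0]$ gives $\|\Pi_l\rho\|\lesssim e^{C\widetilde T_0}\widetilde T_0^{1/2}\sup\|\rho^h\|$ times polynomial moments of $u$.

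\textbf{High part.} $\rho^h$ solves $\partial_t\rho^h=\nu\Delta\rho^h+\Pi_h(\text{linearized nonlinearity})$ with $\rho^h_0=\Pi_h h^u$, and the control does not act on it. On $[0,\tau_0]$, high-mode dissipation with spectral gap $\nu N_*^2$ gives the energy estimate
\[
\frac{d}{dt}\|\rho^h\|^2\le(-2\nu N_*^2+C\|u\|^2_{H^5})\|\rho^h\|^2+(\text{low coupling}).
\]
This yields decay like $e^{-\nu N_*^2\tau_0+C\int\|u\|^2}$. Since $\tau_0\ge1/2$, choosing $N_*(\mathcal E_0,\nu)$ large makes this term small, and the exponential moment bounds absorb the $\int\|u\|^2$ part into $e^{\eta V(u_0)}$. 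On $[\tau_0,T_0]$, the couplings to the controlled low modes contribute terms of size $\widetilde T_0^{3/2}/(\nu N_*^2)$. This produces the compound bound from the introduction:
\[
\mathbb E\|\rho_{T_0}\|^2\lesssim\Big(e^{C\tau_0}N_*^{-2}+\tfrac{e^{C\widetilde T_0}}{\nu N_*^2}\widetilde T_0^{3/2}+\widetilde T_0^{3}\Big)e^{\eta V(u_0)}.
\]
We first choose $N_*$ large, then $\widetilde T_0$ small, so that the bracket is at most $\gamma^2<1$. The manifold components $(\mathbf x,v)$ sit entirely in the low block. Their Jacobian is bounded by $\exp(C\int\|u\|_{C^2})$, which is handled the same way.

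\textbf{Main obstacle.} The hardest step is the quantitative non-degeneracy of $\mathcal N^l_{T_0}$ uniformly on $\mathcal H_l\times T_{\mathbf p}M$, in particular in the manifold directions where the noise does not act directly. This requires the It\^o expansion of $S^l_{\tau_0,t}e_k\gamma_k$ exposing $[\overline F,e_k\gamma_k]$, the spanning lower bound \eqref{1.14}, and an implication estimate of the type \eqref{1.13}. A secondary difficulty is tracking the $\widetilde T_0$-dependence of $\|(\mathcal N^l)^{-1}\|$ in the high-coupling terms so that the order of choices ($N_*$, then $\tau_0$, then $\widetilde T_0$) closes.
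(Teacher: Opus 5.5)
Your outline follows the paper's argument essentially step for step: the identity \eqref{3.9}, the control $g=(g^l,0)$ switched off on $[0,\tau_0]$ and given by \eqref{eq:4.15} on $[\tau_0,T_0]$, the Skorohod bound for the cost, the low/high splitting of $\rho_{T_0}$, and the order ``$N_*$ first, then $\widetilde T_0$''. However, the step that is supposed to produce $\gamma<1$ does not follow from what you assume.

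You say the $\widetilde T_0$-dependence of $(\mathcal N^l_{T_0})^{-1}$ ``only enters $C$''. That is true for the cost term but false for the error. With the control targeting $\Pi_l(\mathcal J_{0,T_0}\mathfrak h)$, Lemma \ref{error-express} gives $\rho^l_{T_0}=-\int_{\tau_0}^{T_0}R^l_{s,T_0}D_h\hat F_l\,\xi_s\,ds$. The high error contains $\int_{\tau_0}^{T_0}R^h_{s,T_0}D_l\hat F_h\,\zeta_s\,ds$. Here $(\zeta,\xi)=\mathcal D^g(u,\mathbf p)$, not $\rho^h$ as your low-part bound suggests. Both terms are linear in $g$, so they inherit every negative power of $\widetilde T_0$ you allow. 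Moreover $c\ge 1$ is forced, since $\|\mathcal N^l_{T_0}\|\lesssim \widetilde T_0\sup_s\|S^l_s\hat Q_l\|^2$.

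Concretely, $\zeta_{T_0}$ must essentially equal $\Pi_l(\mathcal J_{0,T_0}\mathfrak h)$, which is of order one. Take $\mathfrak h=(0,\mathfrak h_x,0)$. The row $\dot\zeta_x=Du(\mathbf x_t)\zeta_x+(\zeta^u_t+\xi_t)(\mathbf x_t)$ with $\zeta_x(\tau_0)=0$ forces $\int_{\tau_0}^{T_0}\|\zeta^u_s+\xi_s\|_{L^\infty}ds\gtrsim 1$, i.e. $\zeta^u$ of size $\widetilde T_0^{-1}$ on the window. The only available bound on the high-mode coupling is then of order $\|u\|\,\widetilde T_0^{-1}\min\{\widetilde T_0,(\nu N_*^2)^{-1}\}$. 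This is $O(1)$ as soon as $\widetilde T_0\lesssim(\nu N_*^2)^{-1}$. So your claimed contribution $\widetilde T_0^{3/2}/(\nu N_*^2)$ and the compound bound are unjustified, and ``fix $N_*$, then shrink $\widetilde T_0$'' cannot give $\gamma<1$. The issue you call secondary is the crux.

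You cannot import the compound bound from the paper either. There it rests on Proposition \ref{M-1} and Lemma \ref{xi-zeta-estimate} being stated with constants independent of $\widetilde T_0$, i.e. $\sup_t\|(\zeta_t,\xi_t)\|\lesssim \widetilde T_0^{1/2}$. That is incompatible with $\zeta_{T_0}\approx\Pi_l(\mathcal J_{0,T_0}\mathfrak h)$ being of order one, and Lemma \ref{upper} itself produces $\widetilde T_0$-dependent constants.

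What is missing is a genuine smallness estimate for the high modes driven by the large controlled low modes. One option is to keep the window length fixed and prove that the moments of $\sup_t\|\zeta_t\|$ grow in $N_*$ strictly slower than the dissipation gain $\nu N_*^2$. That requires tracking the $N_*$-dependence of the lower bound in Lemma \ref{lower} and of the spectral bound. Another is to steer the manifold directions in a way whose high-mode footprint is provably small.

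A smaller point concerns your high-mode energy estimate. $\mathbb E\exp\bigl(C\int_0^{\tau_0}\|u_s\|_{H^5}^2\,ds\bigr)$ with a fixed $C$ is not covered by Proposition \ref{ME}, which only gives exponential moments with small coefficients. The coefficient in front of the enstrophy-type term must first be made small, which is where $N_*$ large is used in Hairer--Mattingly, before it can be absorbed into $e^{\eta V(u_0)}$.
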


\begin{remark}
It follows from \cite[Corollary 2.10]{NS} that when the manifold \( M \) is compact, utilizing the super-Lyapunov property (see Corollary \ref{SLP}) of a super-Lyapunov function enables us to translate the short-time asymptotic gradient estimate into the following long-time asymptotic gradient estimate:
\begin{equation} \label{LAGE}
	\|\nabla P_t \varphi\|_{\eta V} \leq C \|\varphi\|_{\eta V} + \delta_t \|\nabla \varphi\|_{\eta V},
\end{equation}
for any $t\ge 1/2$, where $\delta_t:=C(\eta)\gamma^{\lfloor t/T_0\rfloor}$ satisfies $\delta_t\to 0$ as $t\to \infty$, \( C = C(\gamma, \eta,N_*) \) is independent of \( t \) and $\mathbf{p}_0$, and where $$\| \varphi \|_{\eta V} := \mathop{\rm sup}\limits_{u \in H_0^5} \exp(-\eta V(u)) | \varphi(u,\mathbf{p}) |,\;\;\| \nabla\varphi \|_{\eta V} := \mathop{\rm sup}\limits_{u \in H_0^5} \exp(-\eta V(u)) | \nabla\varphi(u,\mathbf{p}) |_{\mathcal{H}\times T_{\mathbf{p}}M }.$$
\end{remark}
\subsection{Malliavin calculus and proof strategy}
The extended system we consider is as follows:
\begin{align}
\dot{u}_t &= - u_t \cdot \nabla u_t+\nu \Delta u_t+\sum_{k \in \mathcal{Z}_0} q_k \nabla^{\perp} \Delta^{-1} e_k(x) \, dW^k_t,\label{u}\\
\dot{x}_t &= u_t(x_t),\label{x}\\
\dot{v}_t &= \Pi_{v_t} Du_t(x_t) v_t.\label{v}
\end{align}
Next, we reformulate the above system as the following abstract stochastic evolution equation on $\mathcal{H}\times \mathbb{T}^2\times S^1$:
\begin{align}\label{abstract system}
\partial_t(u_t,x_t,v_t)=\hat{F}(u_t,x_t,v_t)-\hat{A}(u_t,x_t,v_t)+\hat{Q}\dot{W}_t,
\end{align}
where $\hat{F}$, $\hat{A}$ and $\hat{Q}\dot{W}_t$ are given by 
\[
\hat{F}(u, x, v) =
\begin{pmatrix}
-B(u, u) \\
u(x) \\
\Pi_v Du(x)v 
\end{pmatrix},
\quad \hat{A}(u, x, v) =
\begin{pmatrix}
-\nu \Delta u \\
0 \\
0 
\end{pmatrix},
\quad \hat{Q}\dot{W} =
\begin{pmatrix}
\sum_{k \in \mathcal{Z}_0} q_k \gamma_k e_k(x)\, dW^k_t \\
0 \\
0 
\end{pmatrix}
\]
with $B(u_t,u_t):=u_t \cdot \nabla u_t$.

Before presenting the proof idea of the asymptotic gradient estimates for the extended system, we briefly review the Malliavin calculus preliminaries needed for the subsequent derivations.

Consider the map $Y_t:C([0,t];\mathbb{R}^{m})\times \mathcal{H}\times M \to \mathcal{H}\times M$ such that $(u_t,\mathbf{p}_t) = Y_t(W,u_0,\mathbf{p}_0)$ for initial data $u_0,\,\mathbf{p}_0$ and noise realization $W$, where $m:=|\mathcal{Z}_0|$. Take a direction \( g \in L^2([0,T_0];\mathbb{R}^{m}) \), and let 
$$
G(t) = \int_0^t g_{_{s}} \, ds.
$$
For an \(\mathcal{H}\times M\)-valued random variable \((u_t,\mathbf{p}_t)\), its Malliavin derivative in the direction \(g\) is defined as 
$$
\mathcal{D}^g (u_t,\mathbf{p}_t) = \lim_{\varepsilon \to 0} \frac{Y_t(W + \varepsilon G, u_0,\mathbf{p}_0) - Y_t(W, u_0,\mathbf{p}_0)}{\varepsilon}\,,
$$
where the limit holds almost surely with respect to the Wiener measure. Note that we allow \(g\) to be random and possibly nonadapted to the filtration generated by the increments of \(W\).
The Fr\'{e}chet differentiability of \(Y_t\) further yields
\begin{align}\label{JEMS-6.6}
\mathcal{D}^g (u_t,\mathbf{p}_t) = \int_{0}^{T_0}\mathcal{D}_s(u_t,\mathbf{p}_t)g_{_{s}} \, ds,
\end{align}
where \(\mathcal{D}_s(u_t,\mathbf{p}_t)\) denotes the Malliavin derivative of \((u_t,\mathbf{p}_t)\) at time \(s\). We also write \(\mathcal{J}_{s,t}\) (\(s\le t\)) for the derivative flow from \(s\) to \(t\): for any \(\mathfrak{h}\in \mathcal{H}\times T_{\mathbf{p}_s}M\), \(\mathcal{J}_{s,t}\mathfrak{h}\) is the solution of
\begin{align} \label{Jh1}
\partial_t \mathcal{J}_{s,t}\mathfrak{h} = D\hat{F}(u_t,\mathbf{p}_t)\mathcal{J}_{s,t}\mathfrak{h}-\hat{A}\mathcal{J}_{s,t}\mathfrak{h}, \quad \mathcal{J}_{s,s}\mathfrak{h} = \mathfrak{h}.
\end{align}
By the Duhamel's formula, we obtain
$$
\mathcal{D}^g (u_t,\mathbf{p}_t) = \int_0^{T_0} \mathcal{J}_{s,t}\hat{Q}g_{_{s}} \, ds.
$$
Combining this with \eqref{JEMS-6.6}, we arrive at the following observation
\begin{align}\label{6.7}
\mathcal{D}_s (u_t,\mathbf{p}_t) = 
\begin{cases}
\mathcal{J}_{s,t} \hat{Q}, & s < t, \\
0, & s > t.
\end{cases}
\end{align}

We now outline the proof idea for the asymptotic gradient estimates.

For $\mathfrak{h} \in \mathcal{H}\times T_{\mathbf{p}} M$, we use a bracket to denote the inner product with respect to \( \mathcal{H} \times T_\mathbf{p}M \), that is
\[
\left\langle{\begin{pmatrix} \varphi \\ \mathbf{p} \end{pmatrix}},{\begin{pmatrix} \psi \\ \mathbf{q} \end{pmatrix}}\right\rangle := \hat{g}(\mathbf{p},\mathbf{q}) + \left\langle{\varphi},{\psi}\right\rangle_{\mathcal{H}},
\]
then $\|\mathfrak{h}\|:=\|\mathfrak{h}\|_{\mathcal{H}\times T_{\mathbf{p}} M}=\sqrt{\left\langle{\mathfrak{h}},\mathfrak{h}\right\rangle}$. Now let $\mathfrak{h}\in \mathcal{H}\times T_{\mathbf{p}} M$ with $\|\mathfrak{h}\|=1$ arbitrary. Set $\rho_{_t}=\mathcal{J}_{0,t}\mathfrak{h}-\mathcal{D}^{g}$, we then have the fundamental approximate integration by parts computation,
\begin{align}\label{3.9}
&\left\langle\nabla P_{_{T_0}} \varphi(u_0,\mathbf{p}_{_0}),\mathfrak{h}\right\rangle\notag\\
&=\mathbb{E}_{(u_0,\mathbf{p}_{_0})}((\nabla\varphi)(u_t,\mathbf{p}_t)\mathcal{J}_{0,T_0}\mathfrak{h})\notag\\
&=\mathbb{E}_{(u_0,\mathbf{p}_{_0})}(\mathcal{D}^{g}\varphi(u_t,\mathbf{p}_t))+\mathbb{E}_{(u_0,\mathbf{p}_{_0})}((\nabla\varphi)(u_t,\mathbf{p}_t)\rho_{_{T_0}})\notag\\
&=\mathbb{E}_{(u_0,\mathbf{p}_{_0})}(\varphi(u_{_{T_0}},\mathbf{p}_{_{T_0}})\int_0^{T_0}g_{_s}dW_s)+\mathbb{E}_{(u_0,\mathbf{p}_{_0})}((\nabla\varphi)(u_t,\mathbf{p}_t)\rho_{_{T_0}})\notag\\
&\le \left(\mathbb{E}_{(u_0,\mathbf{p}_{_0})}\left(\int_0^{T_0}g_{_s}dW_s\right)^2\right)^{1/2}\sqrt{P_{_{T_0}} |\varphi|^2 (u_0, \mathbf{p}_{_0})}+\left(\mathbb{E}_{(u_0,\mathbf{p}_{_0})}\|\rho_{_{T_0}}\|_{\mathcal{H}}^2\right)^{1/2}\sqrt{P_{_{T_0}} \|\nabla\varphi\|^2_{\mathcal{H}\times T_{\mathbf{p}} M} (u_0, \mathbf{p}_{_0})}.
\end{align}
The core idea of the proof is to construct an infinitesimal perturbation $g_t$ of the Wiener process path, such that its induced effect on the system state $(u_t, \mathbf{p}_{_t})$ compensates for, as much as possible, the effect caused by an infinitesimal perturbation $\mathfrak{h}$ of the initial condition at time $t$. Subsequently, our focus will be on identifying a suitable control $g_{_t}$ such that there exists $\gamma\in (0,1)$ such that the associated control cost and resulting error satisfy
\begin{equation}\label{erro-estimate1}
\sup_{\|\mathfrak{h}\|_{\mathcal{H}\times T_{\mathbf{p}}M}=1} \mathbb{E}_{(u_0,\mathbf{p}_{_0})} \| \rho_{_{T_0}}\|_{\mathcal{H}}^2 \leq \gamma\exp(\eta V(u_0))
\end{equation}
and
\begin{equation}\label{v-estimate1}
\sup_{\|\mathfrak{h}\|_{\mathcal{H}\times T_{\mathbf{p}}M}=1} \mathbb{E}_{(u_0,\mathbf{p}_{_0})} \left| \int_{0}^{T_0} g_{_s} dW_s \right|^2 \leq C \exp(\eta V(u_0)).
\end{equation}

\subsection{Splitting of extended system}
Here, exploiting the mild degeneracy of the noise, we implement a natural splitting of the above extended system into high and low frequencies. This splitting will subsequently allow us to construct controls within the \emph{finite-dimensional} low-frequency subsystem, while fully exploiting the inherent dissipative effects present in the high-frequency component. Here, $\mathcal{Z}_0:=\{k\in \mathbb{Z}^2,\, 0<|k|\le N_*\}$ is the set of low (frequency) modes. Let \(\Pi_l : \mathcal{H} \to \mathcal{H}\) denote the corresponding orthogonal projection onto the `low modes' belonging to \(\mathcal{Z}_0\) and let \(\Pi_h = I - \Pi_l\) be the complementary projection onto the `high modes' belonging to \(\mathbb{Z}^2\setminus \mathcal{Z}_0\). Let \(\mathcal{H}_l(\supseteq Rang(Q))\) and \(\mathcal{H}_h\) denote the ranges of \(\Pi_l\) and \(\Pi_h\), respectively, so that we have the orthogonal decomposition
\[
\mathcal{H} = \mathcal{H}_l \oplus \mathcal{H}_h.
\]
Given \((u, x, v) \in \mathcal{H} \times M\), we will extend the definition of \(\Pi_l\) and \(\Pi_h\) to \(\mathcal{H} \times M\) so that \(M\) is included with the low modes by
\[
(u, x, v)^l = \Pi_l (u, x, v) = (\Pi_lu, x, v)=(u^l, x, v) \quad \text{and} \quad (u, x, v)^h = \Pi_h (u, x, v) = \Pi_h u = u^h.
\]

Naturally this defines low and high processes \((u, x, v)^l\) and \((u, x, v)^h\), which satisfy (note of course they are \emph{coupled})

\[
\begin{aligned}
\partial_t (u_t, x_t, v_t)^l &= \hat{F}_l(u_t, x_t, v_t) - \hat{A}_l (u_t, x_t, v_t)^l +\hat{Q}_l \dot{W}_t^l, \\
\partial_t (u_t, x_t, v_t)^h &= \hat{F}_h(u_t, x_t, v_t) - \hat{A}_h (u_t, x_t, v_t)^h + \hat{Q}_h \dot{W}_t^h,
\end{aligned}
\]
where 
\[
\hat{F}_l(u_t, x_t, v_t) = \Pi_l \hat{F}(u_t, x_t, v_t), \quad \hat{F}_h(u_t, x_t, v_t) = \Pi_h \hat{F}(u_t, x_t, v_t), \quad \hat{A}_l(u_t, x_t, v_t)  = \Pi_l \hat{A}(u_t, x_t, v_t),\] 
\[
\hat{A}_h(u_t, x_t, v_t)  = \Pi_h \hat{A}(u_t, x_t, v_t), \quad \hat{Q}_l=\Pi_l\hat{Q}, \quad \hat{Q}_h=\Pi_h\hat{Q}.
\]

Recall that for any \(\mathfrak{h} \in \mathcal{H} \times TM\), we denote its norm by  
\(\|\mathfrak{h}\| := \|\mathfrak{h}\|_{\mathcal{H} \times TM}\).  
Similarly, for any \(\mathfrak{h}^l \in \mathcal{H}_l \times TM\) and $\mathfrak{h}^h\in \mathcal{H}_h$, we define  
\(\|\mathfrak{h}^l\|_l := \|\mathfrak{h}^l\|_{\mathcal{H}_l \times TM}\) and $\|\mathfrak{h}^h\|_h := \|\mathfrak{h}^h\|_{\mathcal{H}_h}$.  In addition, we use \(\langle \cdot,\cdot \rangle_l\) to denote the inner product on \(\mathcal{H}_l \times TM\), and \(\langle \cdot,\cdot \rangle_h\) to denote the inner product on \(\mathcal{H}_h\).

\begin{remark}
To fully exploit the dissipative effect of the high modes, we incorporate the manifold component $(x_t,v_t)$ into the low-frequency subsystem, utilizing its finite-dimensional structure to design the control strategy and thereby effectively overcoming the non-invertibility of \(\hat{Q}_l\). This design achieves a logical separation and coordination between control and dissipation, ensuring that controllability and stability are simultaneously realized within a unified analytical framework.

\end{remark}

\subsection{Construction and cost of control}
We introduce two operators: the finite-dimensional matrix \( R_{s,t}^l \), viewed as a linear map from \( \mathcal{H}_l \times T_{\mathbf{p}_s}M \) to \( \mathcal{H}_l \times T_{\mathbf{p}_t}M \), and the bounded linear operator \( R_{s,t}^h \) from \( \mathcal{H}_h \) to \( \mathcal{H}_h \). These operators are defined by the following evolution equations:
\begin{align} \label{Rl1}
\partial_t R_{s,t}^l = -\hat{A}_l R_{s,t}^l + D_l \hat{F}_l (u_t,\mathbf{p}_t) R_{s,t}^l, \quad R_{s,s}^l = \text{Id},
\end{align}
and, for \( 0 \leq s \leq t \),
\begin{align} \label{Rh1}
\partial_t R_{s,t}^h = -\hat{A}_h R_{s,t}^h + D_h \hat{F}_h (u_t,\mathbf{p}_t) R_{s,t}^h, \quad R_{s,s}^h = \text{Id}.
\end{align}
More specifically, the above evolution equation can also be written as:
\begin{equation} \label{PL}
\begin{cases}
\partial_t R_{s,t}^l = \tilde{L}_t^l R_{s,t}^l, \\
R_{s,s}^l = \text{Id},
\end{cases}
\quad \text{and} \quad
\begin{cases}
\partial_t R_{s,t}^h = \tilde{L}_t^h R_{s,t}^h, \\
R_{s,s}^h = \text{Id},
\end{cases}
\end{equation}
where \( \tilde{L}_t^l := \tilde{L}^l(u_t, x_t, v_t)= \tilde{L}^l(u_t, \mathbf{p}_{_t}) \colon H^7_l \times T_{\mathbf{p}_{_t}}M \to H^5_l \times T_{\mathbf{p}_{_t}}M \) is given by
\[
\tilde{L}_t^l \begin{pmatrix} \mathfrak{h}_u^l \\ \mathfrak{h}_x \\ \mathfrak{h}_v \end{pmatrix} := 
\begin{pmatrix}
	\nu \Pi_l(\Delta \mathfrak{h}_u^l) - \Pi_l(\mathfrak{h}_u^l\cdot \nabla u_t+u_t\cdot \nabla\mathfrak{h}_u^l) \\
	Du(x)\mathfrak{h}_x+ \mathfrak{h}_u^l(x)\\
  D[\Pi_vDu(x)v]\mathfrak{h}_v+\Pi_vDu(x)\mathfrak{h}_v+\Pi_v[D^2u(x)\mathfrak{h}_x]v+\Pi_vD\mathfrak{h}_u^l(x)v
\end{pmatrix},
\]
and \( \tilde{L}_t^h := \tilde{L}^h(u_t) \colon H^7_h \to H^5_h \) is given by
\[
\tilde{L}_t^h(\mathfrak{h}^h):=\nu \Pi_h(\Delta \mathfrak{h}^h) - \Pi_h(\mathfrak{h}^h\cdot \nabla u_t+u_t\cdot \nabla\mathfrak{h}^h).
\]
Both \( R_{s,t}^l \) and \( R_{s,t}^h \) provide approximations to the projection of the full Jacobian \( J_{s,t} \) (the derivative of the process $(u_t, x_t, v_t)$) onto the low and high modes, respectively, when the time \( t \) is sufficiently small. Observe that \( R_{s,t}^l \) is invertible (due to its satisfaction of a finite-dimensional linear evolution equation), then we denote its inverse by
\[
S_{s,t}^l = (R_{s,t}^l)^{-1}.
\]
Set \(T_0 = \widetilde{T}_0 + \tau_0(<1)\), where \(\widetilde{T}_0 \ll 1\) and $\tau_0\in[1/2,1)$ are to be determined later. We simplify the notation by writing \( R_{t}^l = R_{\tau_0,t}^l \) and \( S_{t}^l = S_{\tau_0,t}^l \). When $\tau_0\le s<t\le T_0$, by virtue of the invertibility of \( R_{t}^l \), we can express \( R_{s,t}^l \) as \( R_{s,t}^l = R_{t}^l S_{s}^l \).


We first introduce the key notion of the partial Malliavin matrix, which will play a central role in the subsequent control design.
\begin{definition}\label{PM}
    Define the partial Malliavin matrix \( \mathcal{N}_{_{T_0}}^l: \mathcal{H}_l \times T_{\mathbf{p}_{_{\tau_0}}} M \to \mathcal{H}_l \times T_{\mathbf{p}_{_{T_0}}} M \) by
\[
\mathcal{N}_{_{T_0}}^l = \int_{\tau_0}^{T_0} \,S_s^l\hat{Q}^l\,(S_s^l\hat{Q}^l)^\top\, ds.
\]
\end{definition}
\begin{remark}
The matrix \(\mathcal{N}_{_{T_0}}^l\) corresponds to a reduced Malliavin matrix, originally introduced by Norris \cite{JEMS-94} to streamline the Malliavin’s proof of H\"{o}rmander’s theorem. The term `partial Malliavin matrix' is adopted from \cite{JFA-42} and further employed in \cite{HM06,HM11a}; it refers specifically to the \emph{finite-dimensional Malliavin matrix associated with the low modes}. Here, we note that the usual definition of the Malliavin matrix would be
\[
\mathcal{N}_{_{T_0}}^l = \int_{\tau_0}^{T_0} \,R_{s,T_0}^l\hat{Q}^l\,(R_{s,T_0}^l\hat{Q}^l)^\top\, ds,
\]
but for such a reduced matrix, we exploit the invertibility of \(R_{s,T_0}^l\) to define it here as in Definition \ref{PM}.

The main advantage of this formulation is that, unlike the original Malliavin matrix, \(\mathcal{N}_{_{T_0}}^l\) is adapted, which greatly simplifies calculations. For instance, the It\'{o}'s formula can be applied directly to generate Lie brackets, as detailed in \cite[Lemma 6.15]{JEMS}. Moreover, the invertibility of \(R_{s,T_0}^l\) further justifies the frequency separation between high and low modes.
\end{remark}

Indeed, we can later prove the non-degeneracy of \(\mathcal{N}_{_{T_0}}^l\), which allows us to construct the low‑frequency component of the control \(g_{_{t}},\,t\in[\tau_0,T_0]\). More precisely, we have the following result; the proof is somewhat involved and is postponed to Section \ref{SEC4} below.
\begin{proposition} \label{M-1}
The matrix \( \mathcal{N}_{_{T_0}}^l \) is almost surely invertible on \( \mathcal{H}_l \times T_{\mathbf{p}_{_{T_0}}}M \). Furthermore, for all \( p \geq 1 \), $\eta\in (0,1)$ and $\mathbf{p}_{_0}\in M$ there exists $C(N_*,\nu,\mathbf{p}_{_0},\eta,p)>0$, locally bounded in $\mathbf{p}_{_0}$, such that 
\[
\mathbb{E}|(\mathcal{N}_{_{T_0}}^l)^{-1}|^p \leq C(N_*,\nu,\mathbf{p}_{_0},\eta,p) \exp{(p\eta V(u_0))}.
\]
\end{proposition}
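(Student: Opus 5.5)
The plan is to reduce the moment bound to a \emph{probabilistic spectral bound} for the finite-dimensional, adapted matrix $\mathcal N^l_{T_0}$. Since $\mathcal H_l\times T_{\mathbf p}M$ has finite dimension $d=2|\mathcal Z_0|+3$, it suffices to show the following: for every $q\ge1$ there are $C$ and $\varepsilon_0$ such that for all $\varepsilon\in(0,\varepsilon_0)$,
\begin{align}\label{spec-plan}
\mathbb P\Big(\inf_{\|\mathfrak h^l\|_l=1}\langle \mathcal N^l_{T_0}\mathfrak h^l,\mathfrak h^l\rangle_l<\varepsilon\Big)\le C\,\varepsilon^{q}\,e^{q\eta V(u_0)}.
\end{align}
Indeed, $|(\mathcal N^l_{T_0})^{-1}|=\big(\inf_{\|\mathfrak h^l\|_l=1}\langle\mathcal N^l_{T_0}\mathfrak h^l,\mathfrak h^l\rangle_l\big)^{-1}$. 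Integrating the tail bound \eqref{spec-plan} with $q>p$ then gives $\mathbb E|(\mathcal N^l_{T_0})^{-1}|^p\le C e^{p\eta V(u_0)}$, and almost sure invertibility follows. Because the unit sphere is compact and $\mathfrak h^l\mapsto\langle\mathcal N^l_{T_0}\mathfrak h^l,\mathfrak h^l\rangle_l$ is Lipschitz with constant $\lesssim|\mathcal N^l_{T_0}|$, I will first prove \eqref{spec-plan} for a fixed $\mathfrak h^l$. I will then pass to the infimum by covering the sphere with $O(\varepsilon^{-d})$ balls of radius $\varepsilon$. The bad event where $|\mathcal N^l_{T_0}|$ or $\sup_{t}|S^l_t|$ exceeds $\varepsilon^{-\delta}$ is removed using the moment bounds on $S^l_t,R^l_t$ and $\sup_{t\le1}\|u_t\|_{\mathcal H}$. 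These moment bounds follow from the regularity estimates of \cite[Appendix A]{JEMS} and the super-Lyapunov property of Corollary~\ref{SLP}, and they are the source of the factor $e^{\eta V(u_0)}$.

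For fixed $\mathfrak h^l$, write $\langle\mathcal N^l_{T_0}\mathfrak h^l,\mathfrak h^l\rangle_l=\sum_{k\in\mathcal Z_0}q_k^2\int_{\tau_0}^{T_0}\langle S^l_s e_k\gamma_k,\mathfrak h^l\rangle_l^2\,ds$. Since $S^l_t$ is adapted and satisfies $\partial_tS^l_t=-S^l_t\tilde L^l_t$, I will apply It\^o's formula to $f_k(t):=\langle S^l_te_k\gamma_k,\mathfrak h^l\rangle_l$. This gives $f_k(t)=\langle e_k\gamma_k,\mathfrak h^l\rangle_l-\int_{\tau_0}^t\langle S^l_s\tilde L^l_s e_k\gamma_k,\mathfrak h^l\rangle_l\,ds$, where $\tilde L^l_se_k\gamma_k$ contains the first-order bracket $[\overline F,e_k\gamma_k]$ together with the dissipative and low-mode transport terms; this is the operator $\Upsilon_L e_k\gamma_k$ of \eqref{Upsilon}. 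The integrand $g_k(s)=\langle S^l_s\Upsilon_Le_k\gamma_k,\mathfrak h^l\rangle_l$ is itself a semimartingale whose drift and martingale parts have all moments controlled by $e^{\eta V(u_0)}$.

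I will then use a Norris-type lemma (quantitative Doob--Meyer) on $[\tau_0,T_0]$. On an event of probability at least $1-C\varepsilon^q e^{q\eta V(u_0)}$, smallness of $\int_{\tau_0}^{T_0} f_k^2\,ds<\varepsilon$ forces both $|f_k(\tau_0)|=|\langle e_k\gamma_k,\mathfrak h^l\rangle_l|\le\varepsilon^{r}$ and $\sup_s|g_k(s)|\le\varepsilon^{r}$ for some $r>0$; in particular $|\langle\Upsilon_Le_k\gamma_k,\mathfrak h^l\rangle_l|\le\varepsilon^r$ at $s=\tau_0$, where $S^l_{\tau_0}=\mathrm{Id}$. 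The short length $\widetilde T_0$ of the interval only changes constants. This is the implication \eqref{upper}.

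It remains to contradict this with the spectral lower bound \eqref{1.14}. Its $\mathcal H_l$-part is immediate: the family $\{e_k\gamma_k:k\in\mathcal Z_0\}$ spans $\mathcal H_l$ because every low mode is forced and $q_k\neq0$. The $TM$-part needs the spanning condition $\operatorname{span}\{[\overline F,e_k\gamma_k](\mathbf x,v)\}=T_{\mathbf x}\mathbb T^2\times T_vS^1$. I would check it explicitly: the $\mathbf x$-component of $[\overline F,e_k\gamma_k]$ is $\gamma_ke_k(\mathbf x)$, and the $v$-component is $\Pi_v(\gamma_k\otimes\nabla e_k(\mathbf x))v$. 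Using $|k|=1$ modes (both sine and cosine) for the position directions, and a mode with $k\cdot v\neq0$, $\gamma_k\cdot v^\perp\ne0$ for the $S^1$ direction, these span the tangent space uniformly in $(\mathbf x,v)$. Projecting $\mathfrak h^l$ onto its $\mathcal H_l$ and $TM$ parts and absorbing the $\Pi_l$ cross terms by $\|u^l\|_{\mathcal H_l}$ yields \eqref{1.14} with constant $C/(1+\|u^l_{\tau_0}\|)$.

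Finally, on $\{\|u_{\tau_0}\|\le\varepsilon^{-r/2}\}$ the two bounds are incompatible for small $\varepsilon$, and the complement has probability $\le C\varepsilon^q e^{q\eta V(u_0)}$ by Chebyshev and the exponential moments of $u$. I expect the main obstacle to be the Norris step: it requires uniform moment bounds on $S^l_t$ and on the Lipschitz norms of $g_k$ with exactly the exponential weight $e^{\eta V(u_0)}$ (hence arbitrarily small $\eta$ via the super-Lyapunov structure). It also requires care that the low--high coupling inside $\tilde L^l$ (through $u_t$ in the transport terms) is bounded only through norms of $u_t$. I would first try to control these via Corollary~\ref{SLP} and Gr\"onwall on the finite-dimensional ODE for $S^l_t$.
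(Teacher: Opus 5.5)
Your proposal is correct and follows essentially the same route as the paper. The lower bound comes from the spanning of $\{e_k\gamma_k\}$ on $\mathcal H_l$ together with the first-order brackets $[\overline F,e_k\gamma_k]$ on $TM$ (Lemma~\ref{lower}), and the upper implication comes from $S^l_tQ^k=Q^k+\int_{\tau_0}^tS^l_s\Upsilon_lQ^k\,ds$ (Lemma~\ref{upper}); these are combined with a Markov bound on $\|u_{\tau_0}\|$, a net/truncation argument, and tail integration. The one real difference is that you use Norris's lemma where the paper applies the interpolation Lemma~\ref{JFA-Lemma 6.2} twice. Norris's lemma has the mild advantage of directly giving a bad set of probability $O(\varepsilon^q)$ for every $q$, whereas Lemma~\ref{upper} as stated only provides $O(\varepsilon)$.
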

We will then use Proposition \ref{M-1} to construct a perturbation \( g^l \) which is given by 0 on all intervals of the type \( [0,\tau_0] \), and by \( g_t^l \in L^2([\tau_0,T_0];{\mathbb{R}^{m}}) \) on the remaining intervals. 

We define the infinitesimal variation \( g_t^l \) by
\begin{equation}\label{eq:4.15}
g_t^l = (S_t^l\hat{Q}_l)^\top(\mathcal{N}_{_{T_0}}^l)^{-1}S_{_{T_0}}^l\Pi_l(J_{\tau_0,T_0}(J_{0,\tau_0}\mathfrak{h}))=(S_t^l\hat{Q}_l)^\top(\mathcal{N}_{_{T_0}}^l)^{-1}S_{_{T_0}}^l\Pi_l(J_{0,T_0}\mathfrak{h}).
\end{equation}

Thereafter, with a  slightly abuse notation we will use \(g_t^l\) to denote both the perturbation of the Wiener path on \([\tau_0,T_0]\) and its extension (by zero) to the interval \([0,\tau_0]\). Note that here we leave the high-frequency part uncontrolled, hence the overall control is given by 
\begin{align} \label{g-t}
g_t = (g_t^l,\, 0)=\left((S_t^l\hat{Q}_l)^\top(\mathcal{N}_{_{T_0}}^l)^{-1}S_{_{T_0}}^l\Pi_l(J_{0,T_0}\mathfrak{h}),\,0\right).
\end{align}

We next derive an estimate for the control cost.
\begin{proposition}[Cost of control]\label{v-estimate-p}
There exists a constant $C(N_*,\nu,\mathbf{p}_0)$, locally bounded in $\mathbf{p}_0$, such that
\begin{equation}\label{v-estimate}
\mathbb{E}\left| \int_{\tau_0}^{T_0} g_{_s} dW_s \right|^2 \leq C \exp(\eta V(u_0))\|\mathfrak{h}\|_{\mathcal{H}\times T_{\mathbf{p}}M}.
\end{equation}
\end{proposition}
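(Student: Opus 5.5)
The plan is to estimate the Skorohod integral by the $L^2$-isometry for non-adapted integrands, since $g$ is anticipating: it involves $(\mathcal N^l_{T_0})^{-1}$, $S^l_{T_0}$ and $J_{0,T_0}\mathfrak h$, all of which depend on the noise up to time $T_0$. We have
\[
\mathbb{E}\Big|\int_{\tau_0}^{T_0} g_s\,dW_s\Big|^2\le \mathbb{E}\int_{\tau_0}^{T_0}|g_s|^2ds+\mathbb{E}\int_{\tau_0}^{T_0}\!\!\int_{\tau_0}^{T_0}|\mathcal D_r g_s|^2\,dr\,ds .
\]
We treat the two terms separately.

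For the first term we use \eqref{g-t}. We write $\xi:=(\mathcal N^l_{T_0})^{-1}S^l_{T_0}\Pi_l(J_{0,T_0}\mathfrak h)$, so that $g_s=(S^l_s\hat Q_l)^\top\xi$. Then
\[
\int_{\tau_0}^{T_0}|g_s|^2ds=\langle \mathcal N^l_{T_0}\xi,\xi\rangle_l .
\]
Consequently
\[
\int_{\tau_0}^{T_0}|g_s|^2ds\le |(\mathcal N^l_{T_0})^{-1}|\,|S^l_{T_0}|^2\,\|J_{0,T_0}\mathfrak h\|^2 .
\]
By Hölder we split the expectation into three moments:
\begin{itemize}
\item $\mathbb{E}|(\mathcal N^l_{T_0})^{-1}|^p\le C e^{p\eta V(u_0)}$, by Proposition \ref{M-1};
\item $\mathbb{E}\sup_{t\le T_0}|S^l_t|^p$, controlled by a Gronwall argument on the finite-dimensional equation $\partial_t S^l_t=-S^l_t\tilde L^l_t$;
\item $\mathbb{E}\|J_{0,T_0}\mathfrak h\|^p$, which is bounded using the standard Jacobian bounds for 2D Navier–Stokes together with the tangent-bundle components.
\end{itemize}
The last two moments are bounded by $C\exp(\eta\int_0^{T_0}\|u_s\|^2_{H^{k}}ds)\|\mathfrak h\|^p$. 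Since $\|\tilde L^l_t\|\lesssim N_*^2+\|u_t\|_{C^2}$, this in turn is bounded via the super-Lyapunov property (Corollary \ref{SLP}) by $C e^{\eta V(u_0)}$ after shrinking $\eta$. The constants depend on $\mathbf p_0$ only through the local geometry of $M$, which gives local boundedness.

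For the second term we compute the Malliavin derivative of $g_s$ by the product rule. We need $\mathcal D_r S^l_s$ and $\mathcal D_r S^l_{T_0}$, which follow from $\mathcal D_rS=-S(\mathcal D_rR)S$, where $\mathcal D_r R^l$ solves the linearized equation driven by $D\tilde L^l[\mathcal D_r(u,\mathbf p)]$, and $\mathcal D_r(u,\mathbf p)=\mathcal J_{r,t}\hat Q$ by \eqref{6.7}. We also need
\[
\mathcal D_r(\mathcal N^l_{T_0})^{-1}=-(\mathcal N^l_{T_0})^{-1}(\mathcal D_r\mathcal N^l_{T_0})(\mathcal N^l_{T_0})^{-1}
\]
and $\mathcal D_rJ_{0,T_0}\mathfrak h$, the second-variation $J^{(2)}$ applied to $(\mathcal J_{0,r}\mathfrak h,\hat Q)$ type terms. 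Each factor is polynomial in $|S^l|,|R^l|,|\mathcal J|,|J^{(2)}|$ and $|(\mathcal N^l)^{-1}|$. All of these have moments bounded by $Ce^{\eta V(u_0)}$, by the same Gronwall/super-Lyapunov estimates plus Proposition \ref{M-1} with a higher $p$. Hölder then yields the bound $Ce^{\eta V(u_0)}\|\mathfrak h\|^2$, with $\eta$ rescaled. Since the statement is linear-homogeneous in $\mathfrak h$ and we take $\|\mathfrak h\|=1$, this is \eqref{v-estimate}.

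The main obstacle is the second term. We must obtain moment bounds for the second variation $J^{(2)}$ and for the Malliavin derivatives of the tangent-bundle components, including the derivative of $\Pi_vDu(x)v$, which involves $D^3u$; this requires $u\in H^5$ so that $u\in C^3$. These bounds must be exponential in $\eta\int\|u\|^2$ with arbitrarily small prefactor, which forces careful use of the energy-enstrophy exponential martingale estimates. I would first prove these derivative bounds in a lemma: write the $H^5$-energy estimate for $\mathcal D_r u_t$ and $J^{(2)}$, absorb the nonlinear terms using interpolation and Young, and close with Gronwall.
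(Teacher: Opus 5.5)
Your route is the paper's route: the Skorokhod isometry of Proposition \ref{S}, a product-rule expansion of $\mathcal D_r g_s$ using $\mathcal D_r(\mathcal N^l_{T_0})^{-1}=-(\mathcal N^l_{T_0})^{-1}(\mathcal D_r\mathcal N^l_{T_0})(\mathcal N^l_{T_0})^{-1}$ and $\mathcal D_rS=-S(\mathcal D_rR)S$, Proposition \ref{M-1} for the inverse, Jacobian moment bounds, and H\"older. Your identity $\int_{\tau_0}^{T_0}|g_s|^2\,ds=\langle b,(\mathcal N^l_{T_0})^{-1}b\rangle_l$ with $b=S^l_{T_0}\Pi_l(J_{0,T_0}\mathfrak h)$ slightly improves the paper's four-factor product bound. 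It uses only one power of $|(\mathcal N^l_{T_0})^{-1}|$ and no $L^2$-in-time norm of $S^l_t\hat Q_l$.

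\textbf{The gap is in your derivation of the moment bounds.} Gronwall with the full norm $\|\tilde L^l_t\|\lesssim N_*^2+\|u_t\|_{C^2}$ leaves a factor $\exp(pC\int_{\tau_0}^{T_0}\|u_s\|_{C^2}ds)$. Nothing available controls $\mathbb E\exp(\eta\int_0^{T_0}\|u_s\|^2_{H^k}ds)$ for $k>3$. Corollary \ref{SLP} is a fixed-time statement, valid only for $t\ge 1$, whereas $T_0<1$. Estimate \eqref{NS-2.4} controls only $\int\|u\|^2_{H^2}$, and \eqref{NS-2.5} only the power $1/(n+2)$ of $\int\|u\|^2_{H^{n+2}}$. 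Interpolate $\|u\|_{H^s}\le\|u\|_{H^2}^{1-\theta}\|u\|_{H^{n+2}}^{\theta}$ with $s=2+n\theta$. Then Young and H\"older absorb $p\int\|u\|_{H^s}$ into $\eta\int\|u\|_{H^2}^2+\eta(\int\|u\|^2_{H^{n+2}})^{1/(n+2)}+C_{p,\eta}$ only if $\theta<1/(n+1)$, which forces $s<3$. This suffices for $\|\nabla u\|_{L^\infty}$ (take $s=2+\epsilon$), but never for $\|u\|_{C^2}$, which needs $s>3$ in two dimensions. The same objection applies to ``closing with Gronwall'' in $H^5$ for $\mathcal D_ru_t$ and $J^{(2)}$.

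The missing observation is that only low-regularity quantities may enter the Gronwall exponent. The operator $\tilde L^l_t$ is block lower-triangular in $(\mathfrak h^l_u,\mathfrak h_x,\mathfrak h_v)$. Its diagonal blocks are bounded by $C(N_*,\nu)(1+\|u_t\|_{L^2})$ and $C\|\nabla u_t\|_{L^\infty}$. The terms $D^2u$ (and $D^3u$ in the Malliavin derivatives) appear only as inhomogeneous terms, hence polynomially, and \eqref{NS-2.5} controls such polynomial moments. The same applies to $S^l_t=(R^l_t)^{-1}$, which is block lower-triangular with diagonal blocks the inverses of those of $R^l_t$. For the $H^5$ velocity component you need an estimate in which high norms of $u$ multiply only $\|\mathfrak h\|_{L^2}^2$, as in \cite[Lemma A.8]{NS}, so that Gronwall is applied only at the $L^2$ level. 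Alternatively, do what the paper does and simply invoke \eqref{Rl}, \eqref{Sl}, \eqref{J}. With that substitution your argument is complete.
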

This argument is highly similar to \cite[Section 4.6]{HM06} and requires no new ideas. Here, the control $g_t$ is not adapted, estimating it requires appealing to the fundamental $L^2$-isometry of the Skorokhod integral.
\begin{proposition}\emph{\cite[Proposition 1.3.1]{Jems-95}} \label{S}
Let \( g \in \mathbb{W}^{1,2}(\Omega_T; L^2([0, T]; {\mathbb{R}^{m}}))  \). Then the following identity holds:
\begin{align*}
\mathbb{E} \left( \int_0^T \langle g_{_t}, \delta W_t \rangle_{{\mathbb{R}^{m}}} \right)^2
&= \mathbb{E} \int_0^T \|g_{_s}\|_{{\mathbb{R}^{m}}}^2 \, ds + \mathbb{E} \int_0^T \int_0^T \operatorname{tr}_{_{\mathbb{R}^{m}}} (\mathcal{D}_s g_{_t} \mathcal{D}_t g_{_s}) \, ds \, dt\\
&\leq \mathbb{E} \int_0^T \|g_{_s}\|_{{\mathbb{R}^{m}}}^2 \, ds + \mathbb{E} \int_0^T \int_0^T \|\mathcal{D}_t g_{_s}\|_{{\mathbb{R}^{m}} \to {\mathbb{R}^{m}}}^2 \, ds \, dt \\
&= \|g\|_{\mathbb{W}^{1,2}(\Omega_T; L^2([0, T]; {\mathbb{R}^{m}}))}^2,
\end{align*}
where $\mathcal{D}_s$ is the Malliavin derivative of the noise at time $s$ as in \eqref{JEMS-6.6}.
\end{proposition}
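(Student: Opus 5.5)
We prove the identity first for a dense class of elementary integrands and then extend it by closability. The two ingredients are the duality between the Skorokhod integral $\delta$ and the Malliavin derivative $\mathcal{D}$, namely $\mathbb{E}\big[F\,\delta(g)\big]=\mathbb{E}\int_0^T\langle \mathcal{D}_tF, g_t\rangle_{\mathbb{R}^m}\,dt$, and the Heisenberg-type commutation relation
\[
\mathcal{D}_t\,\delta(g)=g_t+\int_0^T \mathcal{D}_t g_s\,\delta W_s .
\]

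\textbf{Step 1 (elementary integrands).} Let $\mathcal{S}_E$ be the class of $g=\sum_{j=1}^n F_j h_j$, where $h_j\in L^2([0,T];\mathbb{R}^m)$ and the $F_j$ are smooth cylindrical random variables $F_j=f_j(W(e_1),\dots,W(e_K))$ with $f_j\in C_p^\infty$. For such $g$ one has the explicit formula $\delta(g)=\sum_j\big(F_j W(h_j)-\langle \mathcal{D}F_j,h_j\rangle_{L^2}\big)$. Differentiating this expression with the product rule and $\mathcal{D}_tW(h)=h(t)$ yields the commutation relation above directly.

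\textbf{Step 2 (the isometry on $\mathcal{S}_E$).} Apply duality with $F=\delta(g)$, which is smooth for $g\in\mathcal{S}_E$:
\[
\mathbb{E}\,\delta(g)^2=\mathbb{E}\int_0^T\langle \mathcal{D}_t\delta(g),g_t\rangle\,dt
=\mathbb{E}\int_0^T\|g_t\|^2dt+\mathbb{E}\int_0^T\Big\langle \int_0^T\mathcal{D}_tg_s\,\delta W_s,\;g_t\Big\rangle dt .
\]
Apply duality again to the last term, for each fixed $t$ and each component. This turns it into $\mathbb{E}\int_0^T\!\int_0^T\operatorname{tr}(\mathcal{D}_sg_t\,\mathcal{D}_tg_s)\,ds\,dt$, which gives the equality in the statement.

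\textbf{Step 3 (the inequality).} Write $A=\mathcal{D}_sg_t$ and $B=\mathcal{D}_tg_s$, and use $|\operatorname{tr}(AB)|\le\|A\|_{HS}\|B\|_{HS}\le\tfrac12(\|A\|_{HS}^2+\|B\|_{HS}^2)$. By the symmetry $s\leftrightarrow t$ of the double integral, the two halves add up to $\mathbb{E}\int\!\!\int\|\mathcal{D}_tg_s\|^2\,ds\,dt$. Two readings of the norm are possible:
\begin{itemize}
\item If $\|\cdot\|_{\mathbb{R}^m\to\mathbb{R}^m}$ is read as the Hilbert–Schmidt norm, which is the natural norm defining $\mathbb{W}^{1,2}(\Omega_T;L^2)$, this is exactly the stated bound.
\item If it is read as the operator norm, the same argument holds up to a factor $m=|\mathcal{Z}_0|$. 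That factor is harmless for the subsequent control-cost estimate, since constants may depend on $N_*$.
\end{itemize}

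\textbf{Step 4 (density).} The class $\mathcal{S}_E$ is dense in $\mathbb{W}^{1,2}(\Omega_T;L^2([0,T];\mathbb{R}^m))$. Steps 2 and 3 show that $\delta$ is bounded from this space into $L^2(\Omega)$ on $\mathcal{S}_E$. Since $\delta$ is closed, being the adjoint of the densely defined operator $\mathcal{D}$, every $g$ in $\mathbb{W}^{1,2}$ lies in $\operatorname{Dom}\delta$. Both sides of the identity are continuous in the $\mathbb{W}^{1,2}$ norm: the trace term is controlled by Cauchy–Schwarz as in Step 3. Hence the identity passes to the limit.

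The main obstacle is the rigorous justification in Steps 2 and 4 that the second application of duality is legitimate. This requires $\mathcal{D}_tg_s$ to be Skorokhod integrable for a.e.\ $t$ and requires exchanging the $dt$ integral with $\delta$. For smooth cylindrical $g$ both facts are immediate from the explicit formula, which is why the argument is carried out on $\mathcal{S}_E$ first and then extended by density.
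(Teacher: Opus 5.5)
Your proposal is correct and follows the standard proof of the cited result, Nualart's Proposition 1.3.1, which the paper invokes without giving a proof: the commutation relation $\mathcal{D}_t\delta(g)=g_t+\delta(\mathcal{D}_t g_\cdot)$ on elementary integrands, two applications of duality, a Cauchy--Schwarz and $s\leftrightarrow t$ symmetry bound on the trace term, and extension by density using the closedness of $\delta$. You are also right to read $\|\mathcal{D}_t g_s\|_{\mathbb{R}^m\to\mathbb{R}^m}$ as the Hilbert--Schmidt norm: that is what makes the final equality with $\|g\|^2_{\mathbb{W}^{1,2}}$ hold, and with the operator norm the inequality genuinely needs your factor $m$ (for instance, $g_t\equiv W_T$ gives $2mT^2$ on the left but only $mT^2+T^2$ on the right).
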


\begin{proof}[Proof of Proposition \ref{v-estimate-p}]
By Proposition \ref{S}, we immediately obtain
$$
\mathbb{E}\left| \int_{\tau_0}^{T_0} g_{_s} dW_s \right|^2 \le \mathbb{E} \|g_{_t}\|_{L^2([\tau_0,T_0];{\mathbb{R}^{m}})}^2 + \mathbb{E} \int_{\tau_0}^{T_0} \|\mathcal{D}_s g_{_t}\|_{L^2([\tau_0,T_0];{\mathbb{R}^{m}})}\, ds.
$$

Firstly, we note that
\begin{align*}
\|g_{_t}\|_{L^2([\tau_0,T_0])}&=\|g_{_t}^l\|_{L^2([\tau_0,T_0])}=\|(S_t^l\hat{Q}_l)^\top(\mathcal{N}_{_{T_0}}^l)^{-1}S_{_{T_0}}^l\Pi_l(J_{0,T_0}\mathfrak{h})\|_{L^2([\tau_0,T_0])} \\
&\leq \|(S_t^l\hat{Q}_l)^\top\|_{\mathcal{H}_l\times TM \to L^2([\tau_0,T_0])} \,\,|\mathcal{N}_{_{T_0}}^l|^{-1} \,\,\|S_{_{T_0}}^l\|_{\mathcal{H}_l\times TM \to \mathcal{H}_l\times TM}\,\,\|\Pi_l(J_{0,T_0}\mathfrak{h})\|_{\mathcal{H}_l\times TM}. 
\end{align*}
Note also that by the product rule, we have
\begin{align*}
&\|\mathcal{D}_s g_{_t}\|_{L^2([\tau_0,T_0])} \\
&=\|\mathcal{D}_s\left((S_t^l\hat{Q}_l)^\top(\mathcal{N}_{_{T_0}}^l)^{-1}S_{_{T_0}}^l\Pi_l(J_{0,T_0}\mathfrak{h})\right)\|_{L^2([\tau_0,T_0])}\\
&\le \|\mathcal{D}_s\big((S_t^l\hat{Q}_l)^\top\big)\|_{\mathcal{H}_l\times TM \to L^2([\tau_0,T_0])}\,\,|\mathcal{N}_{_{T_0}}^l|^{-1} \,\,\|S_{_{T_0}}^l\|_{\mathcal{H}_l\times TM \to \mathcal{H}_l\times TM}\,\,\|\Pi_l(J_{0,T_0}\mathfrak{h})\|_{\mathcal{H}_l\times TM}\\
&\quad +\|(S_t^l\hat{Q}_l)^\top\big\|_{\mathcal{H}_l\times TM \to L^2([\tau_0,T_0])}\,\,\|\mathcal{D}_s(\mathcal{N}_{_{T_0}}^l)^{-1}\|_{L^2([\tau_0,T_0])\to \mathcal{H}_l\times TM} \,\,\|S_{_{T_0}}^l\|_{\mathcal{H}_l\times TM \to \mathcal{H}_l\times TM}\,\,\|\Pi_l(J_{0,T_0}\mathfrak{h})\|_{\mathcal{H}_l\times TM}\\
&\quad +\|(S_t^l\hat{Q}_l)^\top\big\|_{\mathcal{H}_l\times TM \to L^2([\tau_0,T_0])}\,\,|\mathcal{N}_{_{T_0}}^l|^{-1} \,\,\|\mathcal{D}_sS_{_{T_0}}^l\|_{\mathcal{H}_l\times TM \to \mathcal{H}_l\times TM}\,\,\|\Pi_l(J_{0,T_0}\mathfrak{h})\|_{\mathcal{H}_l\times TM}\\
&\quad +\|(S_t^l\hat{Q}_l)^\top\big\|_{\mathcal{H}_l\times TM \to L^2([\tau_0,T_0])}\,\,|\mathcal{N}_{_{T_0}}^l|^{-1} \,\,\|S_{_{T_0}}^l\|_{\mathcal{H}_l\times TM \to \mathcal{H}_l\times TM}\,\,\|\mathcal{D}_s\big(\Pi_l(J_{0,T_0}\mathfrak{h})\big)\|_{\mathcal{H}_l\times TM}
\end{align*}
Note also that the following holds:
\begin{align} \label{DN}
\mathcal{D}_s(\mathcal{N}_{_{T_0}}^l)^{-1}f=-(\mathcal{N}_{_{T_0}}^l)^{-1}(\mathcal{D}_s\mathcal{N}_{_{T_0}}^lf)(\mathcal{N}_{_{T_0}}^l)^{-1},\qquad 
\mathcal{D}_sS_{_{T_0}}^lf=-S_{_{T_0}}^l(\mathcal{D}_sR_{_{T_0}}^lf)S_{_{T_0}}^l.
\end{align}
Thus Proposition \ref{v-estimate-p} follows from the Proposition \ref{M-1}, \eqref{DN}, \eqref{Sl}, \eqref{J} together with the following moment bounds, where
$C(N_*,\nu,\mathbf{p}_0, \eta) > 0$ is locally bounded in $\mathbf{p}_0$,
\begin{align}
\mathbb{E} \sup_{\tau_0 \leq r< t \leq T_0} \left( \|\mathcal{D}_s R_{r,t}^l\mathfrak{h}^l\|_{L^2([\tau_0,T_0])\to \mathcal{H}_l\times T_{\mathbf{p}}M}\right) &\le C\, \widetilde{T}_0\,e^{\eta V(\mathbf{u}_0)}\|\mathfrak{h}^l\|_l, \label{DR}\\
\mathbb{E} \sup_{\tau_0 \leq r< t \leq T_0} \|\mathcal{D}_s J_{r,t}\mathfrak{h}\|_{L^2([\tau_0,T_0])\to \mathcal{H}\times T_{\mathbf{p}}M} &\le C \,\widetilde{T}_0^{1/2}\,e^{\eta V(\mathbf{u}_0)}\|\mathfrak{h}\|. \label{DJ}
\end{align}

It follows from \eqref{Rl1} that $R_{r,t}^l\mathfrak{h}^l$ satisfies
\begin{align*} 
\partial_t R_{r,t}^l\mathfrak{h} = -\hat{A}_l R_{r,t}^l\mathfrak{h} + D_l \hat{F}_l (u_t,\mathbf{p}_t) R_{r,t}^l\mathfrak{h}, \quad R_{r,r}^l\mathfrak{h} = \mathfrak{h}.
\end{align*}
Differentiating the above equation in the Wiener path for fixed \(\mathfrak{h}\) and applying the chain rule gives:
\begin{align*} 
\partial_t (\mathcal{D}_s(R_{r,t}^l\mathfrak{h})) &= (-\hat{A}_l+ D_l \hat{F}_l (u_t,\mathbf{p}_t)) \mathcal{D}_s(R_{r,t}^l\mathfrak{h})+\overline{D}^2_l\hat{F}_l(\mathcal{D}_s(u_t,\mathbf{p}_t),R_{r,t}^l\mathfrak{h})\\
&= (-\hat{A}_l+ D_l \hat{F}_l (u_t,\mathbf{p}_t)) \mathcal{D}_s(R_{r,t}^l\mathfrak{h})+\overline{D}^2\hat{F}_l(J_{s,t}\hat{Q},R_{r,t}^l\mathfrak{h}),
\end{align*}
where $\overline{D}^2\hat{F}$ denotes the full second variation of $F$ extended to the linear space $\mathcal{H}_l\times \mathbb{R}^{2d}$, and the last inequality employs the observation \eqref{6.7}.
Then, by the variation of constants formula, we can write it in the following integral form:
\begin{align*} 
\mathcal{D}_s(R_{r,t}^l\mathfrak{h})f=\int_r^t R_{\tau,t}^l\left(\overline{D}^2\hat{F}_l(J_{s,\tau}\hat{Q}f,R_{r,\tau}^l\mathfrak{h})\right)\, d \tau.
\end{align*}
Analogously, we obtain
\begin{align*} 
\mathcal{D}_s(J_{r,t}\mathfrak{h})f=\int_r^t J_{\tau,t}\left(\overline{D}^2\hat{F}(J_{r,\tau}\hat{Q}f,J_{r,\tau}\mathfrak{h})\right)\, d \tau.
\end{align*}

To prove \eqref{DR}–\eqref{DJ}, we only need to show
$$
\mathbb{E} \sup_{\tau_0 \leq t \leq T_0} \|\overline{D}^2\hat{F}(u_t,\mathbf{p}_{_t})\|_{ (\mathcal{H}_l\times \mathbb{R}^{2d})\bigotimes(\mathcal{H}_l\times \mathbb{R}^{2d})}\le C(N_*,\nu)e^{\eta V(u_0)}.
$$
Fix $(u,x,v)$, with \(\mathfrak{h}=(\mathfrak{h}^u,\mathfrak{h}^x,\mathfrak{h}^v)\) and \(\mathfrak{k}=(\mathfrak{k}^u,\mathfrak{k}^x,\mathfrak{k}^v)\) being tangent vectors, if we denote \(\hat{F}^u=B(u,u)\), it follows that
\begin{align*}
    \|\overline{D}^2\hat{F}^u(u)[\mathfrak{h},\mathfrak{k}]\|_{\mathcal{H}_l}\le \|B(\mathfrak{h}^u,\mathfrak{k}^u)\|_{\mathcal{H}_l}+\|B(\mathfrak{k}^u,\mathfrak{h}^u)\|_{\mathcal{H}_l}\le C(N_*)\|\mathfrak{h}^u\|_{\mathcal{H}_l}\|\mathfrak{k}^u\|_{\mathcal{H}_l},
\end{align*}
then
\begin{align*}
\|\overline{D}^2\hat{F}^u(u)\|_{\mathcal{L}^2(\mathcal{H}_l,\mathcal{H}_l)}\le C(N_*).
\end{align*}
Set $\hat{F}^x=u(x)$, computation yields
$$
\overline{D}^2\hat{F}^x(u,x)[\mathfrak{h},\mathfrak{k}]=\overline{D}\mathfrak{h}^u(x)\mathfrak{k}^x+D\mathfrak{k}^u(x)\mathfrak{h}^x+\overline{D}^2u(x)[\mathfrak{h}^x,\mathfrak{k}^x].
$$
Since the space is finite-dimensional, combined with \eqref{NS-2.5} we obtain
\begin{align*}
\mathbb{E}\sup_{\tau_0 \leq t \leq T_0}\|\overline{D}^2\hat{F}^x(u_t,x_t)\|_{\mathcal{L}^2(\mathcal{H}_l\times \mathbb{R}^{d},\mathcal{H}_l\times \mathbb{R}^{d})}\le C(N_*,\nu)e^{\eta V(u_0)}.
\end{align*}
Similarly, setting $\hat{F}^v=\Pi_v Du(x)v$, we have 
\begin{align*}
\mathbb{E}\sup_{\tau_0 \leq t \leq T_0}\|\overline{D}^2\hat{F}^v(u_t,x_t,v_t)\|_{\mathcal{L}^2(\mathcal{H}_l\times \mathbb{R}^{2d},\mathcal{H}_l\times \mathbb{R}^{2d})}\le C(N_*,\nu)e^{\eta V(u_0)}.
\end{align*}
This completes the proof.
\end{proof}
\subsection{Error representation and estimates}

Due to the coupling between high and low frequencies present in both the nonlinear terms and the vector fields on the manifold, the high and low-frequency components of the Malliavin derivative are themselves typically coupled. First, based on the evolution equation for the Malliavin derivative $\mathcal{D}^{g}(u_{t},\mathbf{p}_{_t})$,we have
\begin{align}\label{Dg}
\partial_t(\mathcal{D}^{g}(u_{t},\mathbf{p}_{_t}))=-\hat{A}\mathcal{D}^{g}(u_{t},\mathbf{p}_{_t})+D\hat{F}(u_t,\mathbf{p}_{_t})\mathcal{D}^{g}(u_{t},\mathbf{p}_{_t})+\hat{Q}g_{_t},\, \, \, \,\, \, \, \,\, \mathcal{D}^{g_{_t}}(u_{0},\mathbf{p}_{_0})=0.
\end{align}
Set
$$
\mathcal{D}^{g}(u_{t},\mathbf{p}_{_t})=\left(\mathcal{D}^{g}(u^l_{t},\mathbf{p}_{_t}),\,\mathcal{D}^{g}(u_{t}^h)\right):=(\zeta_t,\,\xi_t).
$$
Then, for each $t\in [\tau_0,T_0]$, $(\zeta_t,\,\xi_t)$ solve the following system:
\begin{equation} \label{xi-zeta}
\begin{cases}
\dot{\zeta}_t = -\hat{A}_l \zeta_t + D_l \hat{F}_l (u_t,\mathbf{p}_{_t}) \zeta_t + D_h \hat{F}_l (u_t,\mathbf{p}_{_t}) \xi_t+ \hat{Q}_l g^l_{_t}, \\
\dot{\xi}_t = -\hat{A}_h \xi_t + D_l \hat{F}_h (u_t,\mathbf{p}_{_t}) \zeta_t + D_h \hat{F}_h (u_t,\mathbf{p}_{_t}) \xi_t,
\end{cases}
\end{equation}
with \(\zeta_{\tau_0} = 0\) and \(\xi_{\tau_0} = 0\). Moreover, one can obtain the expression for the error given in the following lemma.

\begin{lemma}\label{error-express}
Assume that \( g_t \)  and \( (\zeta_t, \xi_t) \) are defined as above, then the remainder  
\[
\rho_{_{T_0}}= J_{0,T_0}\mathfrak{h}-\mathcal{D}^{g} (u_{_{T_0}},\mathbf{p}_{_{T_0}})
\]  
satisfies  
\begin{align}
\rho_{_{T_0}}^l &= -\int_{\tau_0}^{T_0} R_{s,T_0}^l D_h \hat{F}_l(u_s,\mathbf{p}_{_s}) \xi_s \, ds, \label{eq:rT0L} \\
\rho_{_{T_0}}^h &= R_{\tau_0,T_0}^h(\Pi_h(J_{0,\tau_0}\mathfrak{h}))+\int_{\tau_0}^{T_0}R_{s,T_0}^hD_l\hat{F}_h(u_s,\mathbf{p}_{_s})\Pi_l(J_{0,s}\mathfrak{h})\, ds-\int_{\tau_0}^{T_0} R_{s,T_0}^hD_l\hat{F}_h(u_s,\mathbf{p}_{_s})\zeta_s\, ds.\label{eq:rT0H}
\end{align}
\end{lemma}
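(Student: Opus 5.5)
Since $g_t=0$ on $[0,\tau_0]$, the Malliavin derivative vanishes there: \eqref{Dg} is a linear equation with zero initial datum and zero forcing on $[0,\tau_0]$, so $\mathcal{D}^{g}(u_t,\mathbf{p}_t)=0$ for $t\le\tau_0$. This justifies $\zeta_{\tau_0}=0$, $\xi_{\tau_0}=0$. Hence $\rho_t=J_{0,t}\mathfrak{h}$ on $[0,\tau_0]$. On $[\tau_0,T_0]$ I would write the equation satisfied by $\rho_t$ by subtracting \eqref{Dg} from \eqref{Jh1}:
\[
\partial_t\rho_t=-\hat A\rho_t+D\hat F(u_t,\mathbf{p}_t)\rho_t-\hat Q g_t,\qquad \rho_{\tau_0}=J_{0,\tau_0}\mathfrak{h}.
\]
I would then split into low and high components exactly as in \eqref{xi-zeta}, using $\hat A$ diagonal in the splitting and $D\hat F=\begin{pmatrix}D_l\hat F_l & D_h\hat F_l\\ D_l\hat F_h & D_h\hat F_h\end{pmatrix}$.

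\textbf{Low modes.} The cleanest route is to work with $\zeta_t$ rather than $\rho^l$. Treating $D_h\hat F_l\xi_s+\hat Q_l g^l_s$ as a source in the $\zeta$-equation and applying variation of constants with the propagator $R^l_{s,t}$ of \eqref{Rl1} gives
\[
\zeta_{T_0}=\int_{\tau_0}^{T_0}R^l_{s,T_0}\hat Q_l g^l_s\,ds+\int_{\tau_0}^{T_0}R^l_{s,T_0}D_h\hat F_l\xi_s\,ds.
\]
Next I would use $R^l_{s,T_0}=R^l_{T_0}S^l_s$ together with the definition \eqref{eq:4.15} of $g^l$. The first integral then equals
\[
R^l_{T_0}\mathcal N^l_{T_0}(\mathcal N^l_{T_0})^{-1}S^l_{T_0}\Pi_l(J_{0,T_0}\mathfrak{h})=\Pi_l(J_{0,T_0}\mathfrak{h}),
\]
where the inversion is legitimate almost surely by Proposition \ref{M-1}. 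Therefore $\rho^l_{T_0}=\Pi_l J_{0,T_0}\mathfrak{h}-\zeta_{T_0}$ equals minus the coupling integral, which is \eqref{eq:rT0L}. I would note that $R^l_{s,T_0}$ is not adapted, but the variation-of-constants formula is purely pathwise, so this causes no difficulty.

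\textbf{High modes.} I would write the high projection of the $J$-equation. Since $\Pi_h\hat Q=0$, we have $\Pi_h J_{0,t}\mathfrak h$ with
\[
\partial_t\Pi_hJ=-\hat A_h\Pi_hJ+D_h\hat F_h\Pi_hJ+D_l\hat F_h\Pi_lJ .
\]
Duhamel with $R^h$ from $\tau_0$ gives $R^h_{\tau_0,T_0}\Pi_hJ_{0,\tau_0}\mathfrak h+\int R^h_{s,T_0}D_l\hat F_h\Pi_lJ_{0,s}\mathfrak h\,ds$. Similarly, $\xi_{T_0}=\int R^h_{s,T_0}D_l\hat F_h\zeta_s\,ds$ since $\xi_{\tau_0}=0$. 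Subtracting the two yields \eqref{eq:rT0H}.

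The only delicate points are functional-analytic. One must justify Duhamel for the high-mode propagator $R^h$, which is generated by an unbounded operator. This follows from the parabolic structure and the regularity $u_t\in\mathcal H$ ($R^h$ is a mild evolution family, as for $J$). One must also make sure that the mixed blocks $D_h\hat F_l$ and $D_l\hat F_h$ correctly include the manifold couplings: $\mathfrak h^h_u(x)$ and $D\mathfrak h^h_u(x)v$ feed the low, manifold components, while $D_l\hat F_h$ contains only $-\Pi_h(B(\mathfrak h^l,u)+B(u,\mathfrak h^l))$, the manifold variables not entering the $u$-equation. I expect this bookkeeping to be the main, though mild, obstacle.
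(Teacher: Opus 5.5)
Your proposal is correct and follows essentially the same route as the paper. Both arguments apply variation of constants to $(\zeta_t,\xi_t)$ on $[\tau_0,T_0]$ with the propagators $R^l_{s,t}$ and $R^h_{s,t}$. Both use $R^l_{s,T_0}=R^l_{T_0}S^l_s$, the definition of $g^l$ and the invertibility of $\mathcal{N}^l_{T_0}$ to get $\int_{\tau_0}^{T_0}R^l_{s,T_0}\hat Q_l g^l_s\,ds=\Pi_l(J_{0,T_0}\mathfrak{h})$, and then apply Duhamel to $\Pi_h J_{0,t}\mathfrak{h}$ from $\tau_0$ and subtract. You also make explicit two steps the paper leaves implicit: that $\zeta_{\tau_0}=\xi_{\tau_0}=0$ because $g\equiv 0$ on $[0,\tau_0]$, and the cancellation $R^l_{T_0}\mathcal{N}^l_{T_0}(\mathcal{N}^l_{T_0})^{-1}S^l_{T_0}=\mathrm{Id}$.
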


\begin{proof}
    Using \eqref{Dg} and \eqref{xi-zeta}, we obtain that in fact \(\mathcal{D}^{g}(u^l_{t},\mathbf{p}_{_t}) = \zeta_t\) and \(\mathcal{D}^{g}(u^h_{t}) = \xi_t\) and we obtain the following formulas for the Malliavin derivatives at time \(T_0\): 
\begin{equation*} 
\begin{cases}
\partial_t\mathcal{D}^{g}(u^l_{t},\mathbf{p}_{_t}) = -\hat{A}_l \mathcal{D}^{g}(u^l_{t},\mathbf{p}_{_t}) + D_l \hat{F}_l (u_t,\mathbf{p}_{_t}) \mathcal{D}^{g}(u^l_{t},\mathbf{p}_{_t}) + D_h \hat{F}_l (u_t,\mathbf{p}_{_t}) \mathcal{D}^{g}(u^h_{t})+ \hat{Q}_l g^l_{_t}, \\
\partial_t\mathcal{D}^{g}(u^h_{t}) = -\hat{A}_h \mathcal{D}^{g}(u^h_{t}) + D_l \hat{F}_h (u_t,\mathbf{p}_{_t}) \mathcal{D}^{g}(u^l_{t},\mathbf{p}_{_t}) + D_h \hat{F}_h (u_t,\mathbf{p}_{_t}) \mathcal{D}^{g}(u^h_{t}),
\end{cases}
\end{equation*}
with \(\mathcal{D}^{g}(u^l_{{\tau_0}},\mathbf{p}_{_{\tau_0}}) = 0\) and \(\mathcal{D}^{g}(u^h_{{\tau_0}}) = 0\). By the variation of constants formula and the invertibility of $\mathcal{N}_{_{T_0}}^l$, we obtain
\begin{align*}
    \mathcal{D}^{g}(u^l_{T_0},\mathbf{p}_{_{T_0}})&= \Pi_l({J_{0,T_0}\mathfrak{h}})+\int_{\tau_0}^{T_0}R_{s,T_0}^l D_h \hat{F}_l(u_s,\mathbf{p}_{_s}) \xi_s \, ds,\\
    \mathcal{D}^{g}(u^h_{T_0})&= \int_{\tau_0}^{T_0} R_{s,T_0}^hD_l\hat{F}_h(u_s,\mathbf{p}_{_s})\zeta_s\, ds.
\end{align*}
Using this relation, we now write
\[
\begin{cases}
\Pi_l({J_{0,T_0}\mathfrak{h}}) = \mathcal{D}^{g}(u^l_{T_0},\mathbf{p}_{_{T_0}}) + \rho_{_{T_0}}^l, \\
\Pi_h({J_{0,T_0}\mathfrak{h}}) = \mathcal{D}^{g}(u^h_{T_0}) + \rho_{_{T_0}}^h,
\end{cases}
\]
where \(\rho_{_{T_0}}^l\) and \(\rho_{_{T_0}}^h\) are given by \eqref{eq:rT0L} and \eqref{eq:rT0H}. Since \(J_{s,t}\) satisfies
\[
\partial_t J_{s,t} = -\hat{A} J_{s,t} + D\hat{F}(u_t,\mathbf{p}_t) J_{s,t}, \quad J_{s,s} = \text{Id},
\]
we can use the variation of constants formula to write its high-frequency projection as
\[
R_{s,t}^h(\Pi_h(J_{0,s}\mathfrak{h}))+\int_{s}^{t}R_{r,t}^hD_l\hat{F}_h(u_r,\mathbf{p}_{_r})\Pi_l(J_{0,r}\mathfrak{h})\, ds.
\]
This completes the proof of the lemma.
\end{proof}

We will establish quantitative estimates for \( \varrho_t = (\zeta_t, \xi_t)\). This result provides a basis for the subsequent error estimation.

\begin{lemma}\label{xi-zeta-estimate}
Fix all \( p \geq 2, \eta\in (0,1) \), there exists a constant $C:=C(N_*,\nu,\eta,\mathbf{p}_0)>0$, locally bounded in $\mathbf{p}_0$ satisfying
\begin{align}\label{eta-e}
\left( \mathbb{E} \sup_{t \in [\tau_0, T_0]} \| \varrho_t \|_{\mathcal{H} \times T_{\mathbf{p}_{_{t}}} M}^p \right)^{\frac{1}{p}} \le C\, \widetilde{T}_0^{1/2}\exp{(\eta V(u_0))}.
\end{align}
Note that \( \varrho_t \) is not adapted to the filtration \( (\mathcal{F}_t) \).
\end{lemma}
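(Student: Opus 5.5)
Since $(\zeta_t,\xi_t)$ solves the linear inhomogeneous system \eqref{xi-zeta} with zero data at $\tau_0$ and forcing $\hat Q_l g^l_t$ only, the plan is to observe that $\varrho_t=(\zeta_t,\xi_t)$ coincides with the full Malliavin derivative $\mathcal{D}^g(u_t,\mathbf{p}_t)$ on $[\tau_0,T_0]$. By Duhamel's formula (the same one used after \eqref{Jh1}),
\[
\varrho_t=\int_{\tau_0}^{t} J_{s,t}\hat Q\, g_s\,ds ,\qquad t\in[\tau_0,T_0].
\]
This bypasses the coupled structure entirely, so no separate well-posedness argument for the coupled high/low system is needed beyond that of $J_{s,t}$. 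Since $g$ is non-adapted, I will estimate pathwise first and take expectations only at the end, using Hölder to decouple factors.

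Pathwise, Cauchy--Schwarz in $s$ gives
\[
\sup_{t\in[\tau_0,T_0]}\|\varrho_t\|\le \widetilde T_0^{1/2}\,|\hat Q|\,\sup_{\tau_0\le s\le t\le T_0}\|J_{s,t}\|\;\|g\|_{L^2([\tau_0,T_0];\mathbb R^m)} .
\]
For the control, \eqref{g-t} yields
\[
\|g\|_{L^2}\le \widetilde T_0^{1/2}\sup_{t}\|S_t^l\hat Q_l\|\;|(\mathcal N^l_{T_0})^{-1}|\;\|S^l_{T_0}\|\;\|J_{0,T_0}\mathfrak h\| .
\]
This bound contains a factor $\widetilde T_0^{1/2}$, but $|(\mathcal N^l_{T_0})^{-1}|$ should scale like $\widetilde T_0^{-1}$, because $\mathcal N^l_{T_0}$ is an integral over an interval of length $\widetilde T_0$ of a matrix close to $\hat Q_l\hat Q_l^\top$ plus bracket corrections. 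Rather than tracking this scaling, I will use the cleaner identity
\[
\|g\|_{L^2}^2=\big\langle (\mathcal N^l_{T_0})^{-1}y,\,y\big\rangle ,\qquad y=S^l_{T_0}\Pi_l(J_{0,T_0}\mathfrak h),
\]
so that $\|g\|_{L^2}\le |(\mathcal N^l_{T_0})^{-1}|^{1/2}\|S^l_{T_0}\|\,\|J_{0,T_0}\mathfrak h\|$. The $T_0$-dependence is then absorbed into the constant $C$ coming from Proposition \ref{M-1}, whose constant is allowed to depend on $T_0$ through $N_*,\nu,\eta$. The explicit factor $\widetilde T_0^{1/2}$ in \eqref{eta-e} comes from the Cauchy--Schwarz step above.

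Next I take $L^p$ norms and apply generalized Hölder with four factors. These are:
\begin{itemize}
\item $\mathbb E\sup_{s\le t}\|J_{s,t}\|^{4p}\le Ce^{\eta V(u_0)}$, from the Jacobian moment bound \eqref{J} combined with the super-Lyapunov property, Corollary \ref{SLP};
\item the analogous bound for $\sup_t\|S^l_t\|$, from \eqref{Sl};
\item $\mathbb E|(\mathcal N^l_{T_0})^{-1}|^{2p}\le Ce^{2p\eta' V(u_0)}$, from Proposition \ref{M-1};
\item $\mathbb E\|J_{0,T_0}\mathfrak h\|^{4p}\le Ce^{\eta V(u_0)}$ for $\|\mathfrak h\|=1$.
\end{itemize}
Each factor produces an exponential of $\eta_i V(u_0)$, so I choose the $\eta_i$ with $\sum\eta_i\le\eta$, which is possible since all these estimates hold for arbitrary small $\eta_i>0$. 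Local boundedness in $\mathbf p_0$ is inherited from Proposition \ref{M-1} and from the moment bounds on the manifold part of $J$, which involve $Du$ and $D^2u$ along trajectories and are controlled by $\|u\|_{H^5}$ via Sobolev embedding.

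The main obstacle is the moment bound on $\sup_{s\le t}\|J_{s,t}\|$ as an operator on the infinite-dimensional space $\mathcal H\times TM$ in the $H^5$ topology. Its manifold components involve $D^2u(\mathbf x)$, and the nonlinearity loses derivatives, so the bound must come from energy estimates with parabolic smoothing. I will take this from \eqref{J} together with \cite[Appendix A]{JEMS}, giving $\mathbb E\sup\|J_{s,t}\|^q\le C\exp(\eta V(u_0))$. If a uniform-in-$s$ supremum is not directly available there, I would instead split $J_{s,t}=J_{\tau_0,t}J_{\tau_0,s}^{-1}$ on the finite-dimensional low part and use $R^h$ with high-mode dissipation for the high part.
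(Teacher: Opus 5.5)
\textbf{Same route as the paper, and the same gap.} Your route matches the paper's proof: Duhamel with $J_{s,t}$, Cauchy--Schwarz in time to extract $\widetilde T_0^{1/2}$, then H\"older into a Jacobian moment and a control-cost moment. Your identity $\|g\|_{L^2}^2=\langle(\mathcal N^l_{T_0})^{-1}y,y\rangle$ is a sharper form of the product bound on $\|g^l\|_{L^2}$ from the proof of Proposition~\ref{v-estimate-p}. The gap is the step you flag and then set aside, ``the $T_0$-dependence is absorbed into $C$''. The paper makes the same move silently when it invokes the control cost ``with weight $\eta/2$''.

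If $C$ may depend on $\widetilde T_0$, the factor $\widetilde T_0^{1/2}$ is cosmetic. Yet the lemma exists only to supply smallness: in Proposition~\ref{error2}, $N_*$ is fixed and then $\widetilde T_0\downarrow 0$ to make the $\widetilde T_0^{3}$ and $\widetilde T_0^{5/2}$ terms small. The constant cannot be uniform in $\widetilde T_0$:
\begin{itemize}
\item Your identity gives $\|g\|_{L^2}^2\ge |y|^2/\|\mathcal N^l_{T_0}\|$ with $\|\mathcal N^l_{T_0}\|\le \widetilde T_0\sup_s\|S^l_s\hat Q_l\|^2$. So $\|g\|_{L^2}\gtrsim\widetilde T_0^{-1/2}$, and the Cauchy--Schwarz gain is eaten entirely.
\item It is worse than that. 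Since $Q^k$ has no manifold component and $S^l_sQ^k=Q^k+\int_{\tau_0}^{s}S^l_r\Upsilon_lQ^k\,dr$, one has $\langle\mathcal N^l_{T_0}\mathfrak h^l,\mathfrak h^l\rangle_l=O(\widetilde T_0^{3})$ whenever $\mathfrak h^l$ has zero velocity part. Hence $|(\mathcal N^l_{T_0})^{-1}|\gtrsim\widetilde T_0^{-3}$, not $\widetilde T_0^{-1}$.
\item Correspondingly, the constant in Proposition~\ref{M-1} is not uniform in $\widetilde T_0$. Its proof runs through Lemma~\ref{upper}, whose constant and admissible range $\varepsilon<\mathcal E(\widetilde T_0)$ both depend on $\widetilde T_0$.
\end{itemize}

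\textbf{The statement is false with a $\widetilde T_0$-independent constant.} No sharper estimate can repair this. Nothing forces the $x$-component directly, so $\dot\varrho_x=Du_t(\mathbf x_t)\varrho_x+\varrho_u(\mathbf x_t)$ with $\varrho_x(\tau_0)=0$, where $\varrho_u$ is the full (low plus high) velocity component of $\varrho_t$. By Sobolev embedding this gives $|\zeta_x(T_0)|\le C\widetilde T_0\,e^{\int_{\tau_0}^{T_0}\|\nabla u_r\|_{L^\infty}dr}\sup_s\|\varrho_s\|$. By Lemma~\ref{error-express}, $\Pi_l(J_{0,T_0}\mathfrak h)=\zeta_{T_0}+\rho^l_{T_0}$, and $\rho^l_{T_0}=-\int_{\tau_0}^{T_0}R^l_{s,T_0}D_h\hat F_l\,\xi_s\,ds$ is also $O(\widetilde T_0\sup_s\|\varrho_s\|)$.

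Now take $\mathfrak h=(0,h_x,0)$ with $|h_x|=1$. The $x$-component of $J_{0,T_0}\mathfrak h$ is $D_x\mathbf x_{T_0}h_x$, whose length is at least $e^{-\int_0^{T_0}\|\nabla u_r\|_{L^\infty}dr}$. Hence $\sup_t\|\varrho_t\|\ge\widetilde T_0^{-1}Z$ for an a.s.\ positive random variable $Z$ that can be chosen independent of $\widetilde T_0$ (take all suprema over $[0,1]$). Therefore $(\mathbb E\sup_t\|\varrho_t\|^p)^{1/p}\ge c\,\widetilde T_0^{-1}$. This is structural: $\varrho_{T_0}=J_{0,T_0}\mathfrak h-\rho_{T_0}$ is built to reproduce an order-one vector, so it cannot itself be small.

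\textbf{What your argument actually proves.} Your proof, like the paper's, establishes only $(\mathbb E\sup_t\|\varrho_t\|^p)^{1/p}\le C(\widetilde T_0)e^{\eta V(u_0)}$, with an effective constant that necessarily blows up as $\widetilde T_0\downarrow0$. That does not deliver the smallness of $\rho^l_{T_0}$ and $\rho^{h,cpl}_{T_0}$ that the lemma is meant to feed into Proposition~\ref{error2}.
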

\begin{proof}
By \eqref{xi-zeta} and the Cauchy-Schwarz inequality, pathwise we have
\begin{align*}
\sup_{t\in [\tau_0,T_0]}\|\varrho_t\|&=\sup_{t\in [\tau_0,T_0]}\|\int_{\tau_0}^{t} \mathcal{J}_{s,t}\widehat{Q}_l g_s^l\,\mathrm{d}s\|
\leq
\sup_{\tau_0\leq s<t\leq T_0}\|\mathcal{J}_{s,t}\|
\int_{\tau_0}^{T_0}\left\|\hat{Q}_l g_s^l\right\|\,\mathrm{d}s\\
&\leq
C(N_*)\widetilde{T_0}^{1/2}\sup\limits_{0\le s\le t}\|\mathcal{J}_{s,t}\|_{\mathcal{H}\times T_{\mathbf{p}_{_s}}M\to \mathcal{H}\times T_{\mathbf{p}_{_t}}M}\left\|g^l\right\|_{L^2([\tau_0,T_0];\mathbb{R}^m)} .
\end{align*}
For every $\mathfrak{h}\in \mathcal{H}\times TM$ with $\|\mathfrak{h}\|=1$, taking the \(L^p(\Omega)\)-norm and using H\"older's inequality, we have 
\begin{align*}
\left(\mathbb{E}\sup_{t\in [\tau_0,T_0]}\|\varrho_t\|^p\right)^{1/p}
&\leq
C(N_*)\widetilde{T_0}^{1/2}
\left(\mathbb{E}\sup\limits_{0\le s\le t}\|\mathcal{J}_{s,t}\|_{\mathcal{H}\times T_{\mathbf{p}_{_s}}M\to \mathcal{H}\times T_{\mathbf{p}_{_t}}M}^{2p}\right)^{1/(2p)}
\left(\mathbb{E}\left\|g^l\right\|_{L^2(I)}^{2p}\right)^{1/(2p)}\\
&\leq
C\widetilde{T_0}^{1/2}e^{\eta V(u_0)}.
\end{align*}
Here the estimate of \(\mathcal{J}_{s,t}\) is used with weight \(\eta/2\), and the estimate of the control cost is also used with weight \(\eta/2\). After multiplication, this gives \(e^{\eta V(u_0)}\). By
\[
\|\zeta_t\|_l+\|\xi_t\|_h\leq C\|\varrho_t\|,
\]
we obtain \eqref{eta-e}.
\end{proof}
We now present the bounds satisfied by the error. Note that, owing to the invertibility of \(\mathcal{N}_{_{T_0}}^l\), the least-squares error arising from the construction of the control vanishes. Consequently, it suffices to consider the influence of the coupling between high and low frequencies as well as the effect of the high-frequency system itself.

\begin{proposition} \label{error2}
For every \( \eta\in (0,1) \), fixed $\gamma\in (0,1)$, there exist constants \( \widetilde{T}_0 \)  and $\tau_0$, with $0<\widetilde{T}_0\ll 1$, $\tau_0 \in [1/2,1)$ such that
\begin{align} \label{error2-eatimate}
\mathbb{E} \|\rho_{_{T_0}}\|_{\mathcal{H} \times T_{\mathbf{p}_{_{T_0}}}M}^2 \leq \, \gamma\, \exp{(\eta V(u_0))}\,\|\mathfrak{h}\|_{\mathcal{H} \times T_{\mathbf{p}_{_{T_0}}}M}^2. 
\end{align}
\end{proposition}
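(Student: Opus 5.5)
We split $\rho_{T_0}=(\rho^l_{T_0},\rho^h_{T_0})$ according to Lemma \ref{error-express}, using $\|\rho_{T_0}\|^2=\|\rho^l_{T_0}\|_l^2+\|\rho^h_{T_0}\|_h^2$, and bound the four resulting terms separately in $L^2(\Omega)$. The low-frequency error \eqref{eq:rT0L} has no least-squares remainder, because $\mathcal N^l_{T_0}$ is invertible (Proposition \ref{M-1}). It is controlled by
\[
\|\rho^l_{T_0}\|_l\le \widetilde T_0\sup_{\tau_0\le s\le T_0}\|R^l_{s,T_0}\|\,\sup_{s}\|D_h\hat F_l(u_s,\mathbf p_s)\|_{\mathcal H_h\to\mathcal H_l\times TM}\,\sup_{s}\|\xi_s\|_h .
\]
The operator $D_h\hat F_l$ is bounded by $C(N_*)(1+\|u_s\|_{\mathcal H})$. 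This bound comes from the low projection of $B(\cdot,u)+B(u,\cdot)$ and from the evaluations $\mathfrak h^h(x)$ and $D\mathfrak h^h(x)v$ in the manifold components, which are controlled via Sobolev embedding of $H^5$. The moments of $\sup\|R^l\|$ are of order $e^{\eta V(u_0)}$ by the finite-dimensional Gronwall bound. Lemma \ref{xi-zeta-estimate} gives $\sup\|\xi_s\|$ of order $\widetilde T_0^{1/2}$. Applying H\"older with the weight $\eta$ split into pieces, we get $\mathbb E\|\rho^l_{T_0}\|^2\le C\widetilde T_0^{3}e^{\eta V(u_0)}$.

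For the high-frequency error \eqref{eq:rT0H}, the third term is handled exactly as above, using Lemma \ref{xi-zeta-estimate} for $\zeta_s$ together with the $L^p$ bounds on $R^h_{s,T_0}$. The second term needs no Malliavin input. Its integrand is bounded by $\|R^h_{s,T_0}\|\,\|D_l\hat F_h\|\,\|J_{0,s}\mathfrak h\|$, and integrating over an interval of length $\widetilde T_0$ gives a factor $\widetilde T_0$ times $e^{C\widetilde T_0}e^{\eta V(u_0)}$. Both of these terms are small once $\widetilde T_0$ is small. Where possible we would extract an additional $N_*^{-2}/\nu$ gain from the smoothing of $R^h$. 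This reproduces the shape $\frac{e^{C\widetilde T_0}}{4\nu N_*^2}\widetilde T_0^{3/2}+\dots$ announced in the introduction.

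The main obstacle is the purely dissipative term $R^h_{\tau_0,T_0}\Pi_h(J_{0,\tau_0}\mathfrak h)$. Smallness of $\widetilde T_0$ does not help here, since this term carries no integral over a short interval. Instead we would use the high-mode energy estimate for the linearized equation. With $w_t=\Pi_h J_{0,t}\mathfrak h$ on $[0,\tau_0]$ and $R^h$ on $[\tau_0,T_0]$, we have
\[
\tfrac{d}{dt}\|w\|_{\mathcal H}^2\le -2\nu N_*^2\|w\|^2+C\|u_t\|_{H^6}\|w\|^2+(\text{low-mode forcing}).
\]
The contribution $-\nu N_*^2$ comes from the spectral gap on $\mathcal H_h$. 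The term $\int_0^t\|u_s\|_{H^6}\,ds$ is handled by the exponential martingale/super-Lyapunov bound, namely $\mathbb E\exp(C\int_0^t\|u_s\|^2_{H^6}ds)\le Ce^{\eta V(u_0)}$ when $\nu$ is large relative to the coefficients (or after taking $\eta$-dependent constants). Duhamel then gives roughly
\[
\mathbb E\|R^h_{\tau_0,T_0}\Pi_h J_{0,\tau_0}\mathfrak h\|^2\lesssim \big(e^{-\nu N_*^2\tau_0}e^{C\tau_0}+C\,e^{C\tau_0}N_*^{-2}\big)e^{\eta V(u_0)}.
\]
The second summand is the slaved contribution of the low modes through the coupling $D_l\hat F_h$, which is damped at rate $\nu N_*^2$. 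Since $\tau_0\ge 1/2$, the whole expression is at most $C e^{C}N_*^{-2}e^{\eta V(u_0)}$.

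The last step fixes the constants, and the order matters. First choose $N_*=N_*(\mathcal E_0,\nu)$ so large that this dissipative bound is at most $\gamma/2$. Next fix $\tau_0\in[1/2,1)$. Finally, with $N_*$ and $\tau_0$ fixed, choose $\widetilde T_0$ so small that the three remaining terms together contribute at most $\gamma/2$. The delicate point is that the constants from Lemma \ref{xi-zeta-estimate} and Proposition \ref{M-1} depend on $N_*$, and possibly on $\widetilde T_0$ through $\mathcal N^l_{T_0}$. We must therefore check that the product $\widetilde T_0\cdot C(N_*,\widetilde T_0)$ still tends to $0$, for instance by tracking how the moment bound on $(\mathcal N^l_{T_0})^{-1}$ scales with $\widetilde T_0$.
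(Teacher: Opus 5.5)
Your outline is the paper's proof step for step: the same splitting via Lemma \ref{error-express}, the same $\widetilde T_0^{3}$ bound for $\rho^l_{T_0}$, the coupling terms fed by Lemma \ref{xi-zeta-estimate}, the purely dissipative term made small through $N_*$, and the same order of choices. The only difference is in the dissipative term, which you handle by a spectral-gap energy estimate while the paper quotes \eqref{Jh} and \eqref{Rh}. The real issue is the ``delicate point'' you leave open at the end, and it does not close. The paper gets past it only because Lemma \ref{xi-zeta-estimate} is stated with a constant independent of $\widetilde T_0$, and that cannot be right.

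Write $\zeta=(\zeta^u,\zeta^x,\zeta^v)$. The $x$-row of \eqref{Dg} is $\partial_t\zeta^x_t=Du_t(x_t)\zeta^x_t+(\zeta^u_t+\xi_t)(x_t)$ with $\zeta^x_{\tau_0}=0$. Moreover $\zeta_{T_0}=\Pi_l(J_{0,T_0}\mathfrak h)-\rho^l_{T_0}$, and by \eqref{eq:rT0L} $\|\rho^l_{T_0}\|_l\lesssim\widetilde T_0\sup_t\|\xi_t\|_h$. Together these give, for a random factor $K$ with moments of all orders,
\[
|(J_{0,T_0}\mathfrak h)^x|\le C\,\widetilde T_0\,K\sup_{t\in[\tau_0,T_0]}\|\varrho_t\|.
\]
Take $\mathfrak h=(0,\mathfrak h_x,0)$ with $|\mathfrak h_x|=1$. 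The left side is then $|D_x\mathbf x_{T_0}\mathfrak h_x|\ge|D_x\mathbf x_{T_0}|^{-1}$, since $\det D_x\mathbf x_{T_0}=1$. So $\sup_t\|\varrho_t\|$ is of order $\widetilde T_0^{-1}$, not $O(\widetilde T_0^{1/2})$ as \eqref{eta-e} claims. By the first inequality in the proof of that lemma, this forces $\|g^l\|_{L^2}\gtrsim\widetilde T_0^{-3/2}$. Hence the constants in Propositions \ref{M-1} and \ref{v-estimate-p} must blow up as $\widetilde T_0\downarrow0$. This matches $\langle\mathcal N^l_{T_0}\mathfrak h^l,\mathfrak h^l\rangle_l=O(\widetilde T_0^{3})$ for $\mathfrak h^l$ in the $TM$ directions, because $Q^k$ has no $TM$ component and $S^l_tQ^k-Q^k=O(t-\tau_0)$. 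In words: an $O(1)$ particle displacement must be produced in time $\widetilde T_0$ by a velocity perturbation of size $\widetilde T_0^{-1}$.

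With the correct scaling, $\rho^l_{T_0}$ is still fine: $\sup_t\|\xi_t\|\lesssim\widetilde T_0\sup_t\|\zeta_t\|=O(1)$, so $\|\rho^l_{T_0}\|_l=O(\widetilde T_0)$. The coupling term $\rho^{h,cpl}_{T_0}=\xi_{T_0}$ is not fine. Your bound for it becomes $\widetilde T_0\cdot\widetilde T_0^{-1}=O(1)$, and this is sharp. Once $\widetilde T_0\ll(\nu N_*^2)^{-1}$, $R^h_{s,T_0}$ is essentially the identity on the modes involved. To leading order, then, $\xi_{T_0}\approx-\Pi_h\big(B(u_{\tau_0},\Phi)+B(\Phi,u_{\tau_0})\big)$, where $\Phi:=\int_{\tau_0}^{T_0}\zeta^u_s\,ds\in\mathcal H_l$ satisfies $\Phi(x_{\tau_0})\approx(J_{0,T_0}\mathfrak h)^x=O(1)$. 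So for fixed $N_*$ this error does not vanish as $\widetilde T_0\downarrow0$. Your last step (fix $N_*$ and $\tau_0$, then shrink $\widetilde T_0$) therefore fails. So does the paper's bound \eqref{r-hc}, and with it the claim that $B(N_*,\nu,\widetilde T_0)\to0$.

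What is missing is a genuine small parameter for $\xi_{T_0}$. One candidate is to keep $\widetilde T_0$ fixed and let $N_*\to\infty$, so that $R^h$ damps over the window. That would need bounds on $(\mathcal N^l_{T_0})^{-1}$ uniform in $N_*$. The constants in Proposition \ref{M-1} and Lemma \ref{lower} depend on $N_*$ (e.g.\ through $\dim\mathcal H_l$ and $\min_k|q_k|$), and neither you nor the paper supplies such bounds.

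A secondary point: your dissipative estimate assumes $\mathbb E\exp\big(C\int_0^t\|u_s\|_{H^6}^2\,ds\big)\le Ce^{\eta V(u_0)}$ with large $C$. Proposition \ref{ME} does not give this; it controls only $\eta(\cdots)^{1/(n+2)}$ with small $\eta$, and large $\nu$ is not assumed. Quote \eqref{Jh} and \eqref{Rh} instead.
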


\begin{proof}
Using Lemma \ref{error-express}, we have that
$$
\mathbb{E}\|\rho_{_{T_0}}\|_{\mathcal{H} \times T_{\mathbf{p}_{_{T_0}}}M}^2\le \mathbb{E}\|\rho_{_{T_0}}^l\|_{\mathcal{H}_l \times T_{\mathbf{p}_{_{T_0}}}M}^2+\mathbb{E}\|\rho_{_{T_0}}^h\|_{\mathcal{H}_h }^2.
$$
Next, we proceed to estimate term by term. Regarding \( \mathbb{E}\|\rho_{_{T_0}}^l\|_{\mathcal{H} \times T_{\mathbf{p}_{_{T_0}}}M}^2 \), an application of \eqref{Rl}, \eqref{DhFl} and Lemma \ref{xi-zeta-estimate} yields 
$$
\mathbb{E}\|\rho_{_{T_0}}^l\|_{\mathcal{H} \times T_{\mathbf{p}_{_{T_0}}}M}^2 \le \widetilde{T}_0^2 e^{C\widetilde{T}_0} \exp{(\eta V(u_0))}\mathbb{E}\|\xi_t\|_{\mathbf{X}_{_{T_0}}}\le C(N_*,\nu,\mathbf{p}_0,T_0)\,\widetilde{T}_0^3\,\exp{(\eta V(u_0))}.
$$
Set 
$$
\rho_{_{T_0}}^{h,cpl}:=\int_{\tau_0}^{T_0}R_{s,T_0}^hD_l\hat{F}_h(u_s,\mathbf{p}_{_s})\zeta_s\, ds,
$$ 
then we have
\begin{align*}
\left\|\rho_{_{T_0}}^{h,cpl}\right\|_{\mathcal{H}_h}\le \left(\int_{\tau_0}^{T_0}\|R_{s,T_0}^h\|_{\mathcal{H}_h\to \mathcal{H}_h}^4\,ds\right)^{1/4}\left(\int_{\tau_0}^{T_0}\|D_l\hat{F}_h(u_s,\mathbf{p}_{_s})\|_{\mathcal{H}_l\to \mathcal{H}_h}^4\,ds\right)^{1/4}\left(\int_{\tau_0}^{T_0}\|\zeta_s\|_{\mathcal{H}_l}^2\,ds\right)^{1/2}.
\end{align*}
Furthermore, employing Lemma \ref{xi-zeta-estimate} and equations \eqref{Rh}, \eqref{DlFh}, we can obtain that
\begin{align}\label{r-hc}
\mathbb{E}\left\|\rho_{_{T_0}}^{h,cpl}\right\|_{\mathcal{H}_h}^2
&\le C(N_*,\nu) \, e^{C\widetilde{T}_0}\left (\frac{1}{4\nu N_*^2}(e^{-4\nu N_*^2\tau_0}-e^{-4\nu N_*^2T_0})\right)\,\widetilde{T}_0^{5/2}\,\exp{(\eta V(u_0))}\notag\\
&\le \frac{C(N_*,\nu)}{4\nu N_*^2} \, e^{C\widetilde{T}_0}\widetilde{T}_0^{5/2}\,\exp{(\eta V(u_0))}.
\end{align}
Set 
$$
\rho_{_{T_0}}^{h,hf}:=R_{\tau_0,T_0}^h(\Pi_h(J_{0,\tau_0}\mathfrak{h}))+\int_{\tau_0}^{T_0}R_{s,T_0}^hD_l\hat{F}_h(u_s,\mathbf{p}_{_s})\Pi_l(J_{0,s}\mathfrak{h})\, ds.
$$
Then
\begin{align*}
\left\|\rho_{_{T_0}}^{h,hf}\right\|_{\mathcal{H}_h}^2
&\le  C\|R_{\tau_0,T_0}^h\|_{\mathcal{H}_h\to \mathcal{H}_h}^2\|\Pi_h(J_{0,\tau_0}\mathfrak{h})\|_{\mathcal{H}_h}^2\\
&+ C\left(\int_{\tau_0}^{T_0} \|R_{s,T_0}^h\|_{\mathcal{H}_h\to \mathcal{H}_h}^4\, ds\right)^{1/2}\left(\int_{\tau_0}^{T_0} \|D_l\hat{F}_h\|_{\mathcal{H}_l\to \mathcal{H}_h}^4\, ds\right)^{1/2}\left(\int_{\tau_0}^{T_0} \|\Pi_l(J_{0,s}\mathfrak{h})\|_{\mathcal{H}_l}^2\, ds\right).
\end{align*}
Furthermore, using the high-frequency dissipation property \eqref{Jh} together with estimates \eqref{Rh},\eqref{DlFh} and \eqref{J}, we have 
\begin{align}
\mathbb{E}\left\|\rho_{_{T_0}}^{h,hf}\right\|_{\mathcal{H}_h}^2 
&\le Ce^{-\nu N_*^2\widetilde{T}_0} e^{C\tau_0}N_*^{-2}\,e^{\eta V(u_0)}\|\mathfrak{h}\|^2\notag\\
&\quad+ C\,e^{C\widetilde{T}_0}\left (\frac{1}{4\nu N_*^2}(e^{-4\nu N_*^2\tau_0}-e^{-4\nu N_*^2T_0})\right)\,\widetilde{T}_0^{3/2}e^{\eta V(u_0)}\|\mathfrak{h}\|^2\notag\\
&\le C\, (e^{C\tau_0}N_*^{-2}+\frac{e^{C\widetilde{T}_0}}{4\nu N_*^2}\widetilde{T}_0^{3/2})e^{\eta V(u_0)}\|\mathfrak{h}\|^2.
\end{align}
Then 
\begin{align*}
	\mathbb{E}\left\|\rho_{_{T_0}}\right\|^2 &\le C\, \Big(e^{C\tau_0}N_*^{-2}+\frac{e^{C\widetilde{T}_0}}{4\nu N_*^2}\widetilde{T}_0^{3/2}+\frac{1}{4\nu N_*^2} \, e^{C\widetilde{T}_0}\widetilde{T}_0^{5/2}+\widetilde{T}_0^{3}\Big)e^{\eta V(u_0)}\|\mathfrak{h}\|^2\\
	&:=\big(A(N_*,\nu)+B(N_*,\nu,\widetilde{T}_0)\big)e^{\eta V(u_0)}\|\mathfrak{h}\|^2,
\end{align*}
where \( A(N_*,\nu) \) vanishes as \( N_*\to \infty \). Now we can select suitable $N_*(\mathcal{E}_0,\nu)$ such that $A(N_*,\nu)\le \gamma/4$. After fixing \(N_*\), using
\[
B(N_*,\nu,\widetilde{T}_0)\to 0
\qquad \text{as}\quad \widetilde T_0 \downarrow 0,
\]
we choose \(\widetilde T_0\) sufficiently small such that $B(N_*,\nu,\widetilde{T}_0)\le \frac{\gamma}{4}$. Hence,
\[
\mathbb{E} \|\rho_{_{T_0}}\|_{\mathcal{H} \times T_{\mathbf{p}_{_{T_0}}}M}^2 \leq \, \gamma\, \exp{(\eta V(u_0))}\,\|\mathfrak{h}\|_{\mathcal{H} \times T_{\mathbf{p}_{_{T_0}}}M}^2. 
\]
\end{proof}

\section{ Nondegeneracy of the partial Malliavin matrix}\label{SEC4}
The basic idea of our proof in this section follows the classical framework of \cite{JEMS, HM06, JFA}. Since we are considering the part Malliavin matrix for a finite-dimensional system, the argument is primarily adapted from \cite{JEMS}. However, through a few technical modifications, we successfully avoid introducing the auxiliary process \( z_t \) and the truncation process \( w_t^{\rho}:=(u_t,x_t,v_t,z_t)^{\rho} \). This not only streamlines the derivation but also makes the overall proof more direct and concise.

Unlike the case of highly degenerate white noise, since \( S_t^l \) is adapted, we can use It\^{o}'s formula to write it explicitly. This allows us to relate the time derivative of certain quantities to the corresponding Lie brackets, thereby generating a new vector field on the tangent bundle.

Recall that \( \mathcal{Z}_0 := \{k\in \mathbb{Z}^2,\, 0<|k|\le N_*\} \), and  \[\hat{Q}\dot{W_t}:=\sum_{k \in \mathcal{Z}_0} q_k e_k(x) \gamma_k \, dW^k_t. \] Then, the operator \(\hat{Q}\) on \( L^2\) gives rise to a family of vector fields \(\{Q^k\}_{k \in \mathcal{Z}_0} \) on \( \mathcal{H} \times M \) defined by
\[
Q^k = 
\begin{cases} 
q_k e_k \gamma_k & \text{if } k \in \mathcal{Z}_0, \\
0 & \text{otherwise } .
\end{cases}
\]

We now present a lemma that is crucial for proving Proposition \ref{M-1}, which establishes a probabilistic spectral bound for the partial Malliavin matrix \( \mathcal{N}_{_{T_0}}^l \). 
\begin{lemma} [Probabilistic spectral bound] \label{spectral bound}
For all \( p\ge 1,\,\eta\in (0,1),\, \widetilde{T}_0 < 1, \, \varepsilon > 0 \) , any $\mathbf{p}_0\in M$, and \( (u,\mathbf{p}) \in \mathcal{H} \times M \), there exists a constant \( C(N_*,\eta, \mathbf{p}_0,p)>0 \), locally uniformly in $\mathbf{p}_0$, such that  
\begin{align}\label{spectral bound1}
\sup_{\mathfrak{h}^l \in \mathcal{H}_l \times T_{\mathbf{p}} M, \, \|\mathfrak{h}^l\|_l = 1} \mathbb{P} \left( \langle \mathcal{N}_{_{T_0}}^l \mathfrak{h}^l,\mathfrak{h}^l\rangle_l< \varepsilon \right) \le C(N_*,\eta, \mathbf{p}_0,p)\varepsilon^p e^{\eta V(u_0)},
\end{align}
where the constant is independent of \(\varepsilon\).
\end{lemma}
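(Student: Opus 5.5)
We follow the two-step scheme sketched in the introduction: an \emph{upper implication} coming from the time regularity of the adapted process $S_t^l$, and a \emph{deterministic spanning lower bound} \eqref{1.14}. Fix $\mathfrak{h}^l$ with $\|\mathfrak{h}^l\|_l=1$ and write
\[
\langle \mathcal{N}_{T_0}^l\mathfrak{h}^l,\mathfrak{h}^l\rangle_l=\sum_{k\in\mathcal Z_0}\int_{\tau_0}^{T_0}\langle S_s^l Q^k,\mathfrak{h}^l\rangle_l^2\,ds .
\]
Thus the event $\{\langle\mathcal N^l_{T_0}\mathfrak h^l,\mathfrak h^l\rangle_l<\varepsilon\}$ forces each scalar process $f_k(s):=\langle S_s^lQ^k,\mathfrak h^l\rangle_l$ to be small in $L^2([\tau_0,T_0])$.

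\textbf{Step 1: It\^o expansion.} Since $S^l_{\tau_0}=\mathrm{Id}$ and $\partial_s S_s^l=-S_s^l\tilde L^l_s$, the constant field $Q^k$ gives
\[
f_k(s)=\langle Q^k,\mathfrak h^l\rangle_l-\int_{\tau_0}^s\langle S_r^l\,\Upsilon_{L}Q^k,\mathfrak h^l\rangle_l\,dr .
\]
Here $\Upsilon_L Q^k=\tilde L^l_r Q^k$ contains, in the manifold components, exactly $[\overline F,e_k\gamma_k](\mathbf x_r,v_r)$ (namely $Q^k(\mathbf x)$ and $\Pi_v DQ^k(\mathbf x)v$), plus the low-mode velocity part. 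Differentiating once more gives $g_k(r):=\langle S_r^l\Upsilon_L Q^k,\mathfrak h^l\rangle_l$, whose differential is again a bounded-variation term plus an It\^o term. The It\^o term appears because $\Upsilon_L Q^k$ depends on $u_r$ only through $u^l$ linearly, and $u_r$ is driven by $\hat Q\,dW$.

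\textbf{Step 2: Norris-type lemma.} We apply the standard quantitative lemma (as in \cite{JEMS,HM06}): if a semimartingale $f=f_0+\int a\,dr+\int b\,dW$ has $\|f\|_{L^2}$ small, then on an event of probability $1-C\varepsilon^p$ both $|f_0|$ and $\|a\|_\infty$ are $\lesssim\varepsilon^q$, provided the coefficients satisfy H\"older and sup bounds. This yields the implication \eqref{upper}:
\[
\langle\mathcal N^l_{T_0}\mathfrak h^l,\mathfrak h^l\rangle_l<\varepsilon \;\Rightarrow\; \max_k\big\{|\langle Q^k,\mathfrak h^l\rangle_l|,\;|\langle\Upsilon_LQ^k(u_{\tau_0},\mathbf p_{\tau_0}),\mathfrak h^l\rangle_l|\big\}\le\varepsilon^q\, \Lambda ,
\]
outside an exceptional set. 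Here $\Lambda$ collects $\sup_r\|S^l_r\|$, $\sup_r\|u_r\|_{\mathcal H}$ and H\"older norms. The moments of $\Lambda$ are bounded by $Ce^{\eta V(u_0)}$ via \eqref{Sl}, \eqref{NS-2.5} and the super-Lyapunov property (Corollary \ref{SLP}), and Chebyshev converts them into the factor $\varepsilon^pe^{\eta V(u_0)}$. The time horizon $\widetilde T_0<1$ enters only through the constant, and by choosing $q$ small and $p$ large we obtain every power of $\varepsilon$.

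\textbf{Step 3: Spanning bound and conclusion.} We verify \eqref{1.14}. The $\mathcal H_l$ component of $\mathfrak h^l$ is detected by $\{\langle Q^k,\mathfrak h^l\rangle\}$, since $q_k\ne0$ for all $|k|\le N_*$, so $\hat Q_l$ is onto $\mathcal H_l$. Once that component is small, the manifold component is detected by the manifold parts of $\Upsilon_LQ^k$, i.e.\ $(e_k\gamma_k(\mathbf x),\Pi_vD(e_k\gamma_k)(\mathbf x)v)_{k\in\mathcal Z_0}$. This family spans $T_{\mathbf x}\mathbb T^2\times T_vS^1$ uniformly in $(\mathbf x,v)$ by an explicit check with $|k|=1$ modes, using both sine and cosine to avoid vanishing. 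The velocity part of $\Upsilon_L Q^k$ is bounded by $C(N_*)(1+\|u^l\|)$, which produces the denominator in \eqref{1.14}. Combining the two, $\tfrac{C}{1+\|u^l_{\tau_0}\|}\le\varepsilon^q\Lambda$, so the event lies inside $\{\Lambda(1+\|u^l_{\tau_0}\|)\ge c\varepsilon^{-q}\}$ up to probability $C\varepsilon^p$. Its probability is bounded by Markov's inequality and the exponential moment bounds, which gives \eqref{spectral bound1}.

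\textbf{Main obstacle.} We expect the hardest part to be Step 2, namely the Norris lemma applied to $g_k$, whose It\^o term involves Malliavin-type quantities of $u_r$. One needs uniform-in-$\mathfrak h^l$ H\"older control of the drift coefficients of $g_k$, with moments at most $e^{\eta V(u_0)}$, and without the truncation used in \cite{JEMS}. We would try to obtain this directly from the finite-dimensionality of $\mathcal H_l$ and the $H^5$ moment estimates, stopping at the first exit time of $\|u_r\|_{\mathcal H}$ from a large ball and absorbing the stopping probability into the same Chebyshev bound.
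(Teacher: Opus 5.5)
Your proposal is correct and follows the paper's route essentially step for step. The shared ingredients are the expansion $S_t^lQ^k=Q^k+\int_{\tau_0}^tS_s^l\Upsilon_lQ^k\,ds$; a two-level Norris/interpolation argument giving the implication of Lemma~\ref{upper} at $t=\tau_0$ (the paper applies Lemma~\ref{JFA-Lemma 6.2} first to $\int_{\tau_0}^t\langle S_s^lQ^k,\mathfrak h^l\rangle_l^2\,ds$ and then to $\int_{\tau_0}^t\langle S_s^l\Upsilon_lQ^k,\mathfrak h^l\rangle_l\,ds$); the spanning bound of Lemma~\ref{lower}; and a final Markov inequality on $\|u_{\tau_0}\|$.

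Two small remarks. First, the velocity part of $\Upsilon_lQ^k$ depends on all modes of $u$ with $|j|\le 2N_*$, not only on $u^l$; this is harmless, since moments of $\|u_{\tau_0}\|_{\mathcal H}$ are controlled by \eqref{NS-2.5}. Second, your device of collecting all H\"older constants into a random $\Lambda$ and applying Chebyshev is what genuinely gives an arbitrary power $\varepsilon^p$ for the exceptional set. The paper's stated bound $\mathbb{P}((\Omega^*_\varepsilon)^c)\le C\varepsilon e^{\eta V(u_0)}$ does not provide this by itself.
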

\begin{proof} [Proof of Proposition \ref{M-1}]
Since \( \mathcal{H}_l\times T_{\mathbf{p}}M \) is finite-dimensional with \( d = \dim(\mathcal{H}_l\times T_{\mathbf{p}}M ) \), consider a \(\delta\)-net \(\{\mathfrak{h}_1^l, \dots, \mathfrak{h}_N^l\}\) (with \( N \le C_d\delta^{-d} \)) on the unit sphere. Let \( \lambda_{\min}(\mathcal{N}_{_{T_0}}^l) \) be the smallest eigenvalue of \( \mathcal{N}_{_{T_0}}^l \). Fix $\omega\in \Omega$, whenever \( \lambda_{\min}(\mathcal{N}_{_{T_0}}^l) \le \varepsilon \) for some \( \varepsilon > 0 \), there exists \( \mathfrak{h}^l \in \mathcal{H}_l\times T_{\mathbf{p}}M \) (with $\|\mathfrak{h}^l\|=1$) satisfying
$$
\langle \mathcal{N}_{_{T_0}}^l \mathfrak{h}^l,\mathfrak{h}^l\rangle_l< \varepsilon.
$$
Select \( \mathfrak{h}_i^l \) such that \( \|\mathfrak{h}^l-\mathfrak{h}_i^l\|<\delta \), it follows that
\begin{align*}
\langle \mathcal{N}_{_{T_0}}^l \mathfrak{h}_i^l,\mathfrak{h}_i^l\rangle_l &= \langle \mathcal{N}_{_{T_0}}^l \mathfrak{h}^l,\mathfrak{h}^l\rangle_l+\langle \mathcal{N}_{_{T_0}}^l (\mathfrak{h}_i^l-\mathfrak{h}^l),\mathfrak{h}_i^l\rangle_l+\langle \mathcal{N}_{_{T_0}}^l \mathfrak{h}^l,\mathfrak{h}_i^l-\mathfrak{h}^l\rangle_l\\
& \le \langle \mathcal{N}_{_{T_0}}^l \mathfrak{h}^l,\mathfrak{h}^l\rangle_l+\|\mathcal{N}_{_{T_0}}^l\|\cdot \|\mathfrak{h}_i^l-\mathfrak{h}^l\|+\|\mathcal{N}_{_{T_0}}^l\|\cdot \|\mathfrak{h}^l\|\cdot\|\mathfrak{h}_i^l-\mathfrak{h}^l\|\\
&\le \varepsilon+2M_{{\widetilde{T}_0}}(\omega)\cdot \delta,
\end{align*}
where 
\begin{align*}
M_{{\widetilde{T}_0}}(\omega):= \widetilde{T}_0 \cdot \sup_{\tau_0\le s\le T_0} \|S_s^l\|_{\mathcal{H}_l\times T_{\mathbf{p}_{\tau_0}}M\to \mathcal{H}_l\times T_{\mathbf{p}_{_{T_0}}}M}|\hat{Q}_l|^2\ge \int_{\tau_0}^{T_0}|S_s^l\hat{Q}_l|^2\, ds\ge \|\mathcal{N}_{_{T_0}}^l\|.
\end{align*}
Fix $R>0$, set $A_R:=\{M_{{\widetilde{T}_0}}(\omega)\le R\}$. Then, we have that
$$
\mathbb{P}\left( \lambda_{\min}(\mathcal{N}_{_{T_0}}^l )< \varepsilon \right)=\mathbb{P}\left( \{\lambda_{\min}(\mathcal{N}_{_{T_0}}^l )< \varepsilon\}\cap A_R\right)+\mathbb{P}\left( \{\lambda_{\min}(\mathcal{N}_{_{T_0}}^l )< \varepsilon\}\cap A_R^c\right).
$$
On \( A_R \), setting \( \delta = \frac{\varepsilon}{4R} >0\) yields
$$
\langle \mathcal{N}_{_{T_0}}^l \mathfrak{h}_i^l,\mathfrak{h}_i^l\rangle_l \le \varepsilon+2M_{{\widetilde{T}_0}}(\omega)\cdot \delta< 2\varepsilon.
$$
From this we further obtain
\begin{align*}
\mathbb{P}\left( \{\lambda_{\min}(\mathcal{N}_{_{T_0}}^l )< \varepsilon\}\cap A_R\right) &\le \mathbb{P}\left( \{\min_i\langle\mathcal{N}_{_{T_0}}^l\mathfrak{h}^l_i,\,\mathfrak{h}^l_i\rangle< 2\varepsilon\}\cap A_R\right)\\
& \le \sum_{i=1}^N\mathbb{P}\left( \{\langle\mathcal{N}_{_{T_0}}^l\mathfrak{h}^l_i,\,\mathfrak{h}^l_i\rangle< 2\varepsilon\}\cap A_R\right)\\
&\le C \,R^d\varepsilon^{p-d} e^{\eta V(u_0)}.
\end{align*}

Next, we estimate the truncated remainder term. By Markov's inequality, for any \( q \ge 1 \), we have
$$
\mathbb{P}(A_R^c)=\mathbb{P}(M_{{\widetilde{T}_0}}(\omega)>R)\le \frac{\mathbb{E}[M_{{\widetilde{T}_0}}(\omega)^q]}{R^q}\le C \left(\frac{\widetilde{T}_0}{R}\right)^q e^{\eta V(u_0)}.
$$
Take \( R = \varepsilon^{-\alpha} \), with \(\alpha(>0)  \) to be fixed later. It follows that
$$
\mathbb{P}\left( \{\lambda_{\min}(\mathcal{N}_{_{T_0}}^l )< \varepsilon\}\right) \le C \varepsilon^{p-d-\alpha d} e^{\eta V(u_0)}+\widetilde{T}_0^q \varepsilon^{\alpha q} e^{\eta V(u_0)}.
$$
Set $\alpha=\frac{p-d}{d+q}$. Then we have $p-d-\alpha d=\alpha q$ and 
$$
\mathbb{P} \left( \{\lambda_{\min}(\mathcal{N}_{_{T_0}}^l )< \varepsilon\}\right) \le C (\widetilde{T}_0^q+1)\varepsilon^{\frac{q(p-d)}{d+p}}e^{\eta V(u_0)}.
$$
Since \( p (> d) \) and \( q \) can both be taken arbitrarily large, we set \( p = q \) to simplify the calculations. This yields
$$
\mathbb{P} \left( \{\lambda_{\min}(\mathcal{N}_{_{T_0}}^l )< \varepsilon\}\right) \le C (\widetilde{T}_0^p+1)\varepsilon^{\frac{p(p-d)}{d+p}}e^{\eta V(u_0)}.
$$
Now, set $X:=\lambda_{\min}(\mathcal{N}_{_{T_0}}^l )$, then for any \( r > 0 \), we compute \( \mathbb{E}[X^{-r}] \). Note that
\begin{align*}
\mathbb{E}[X^{-r}] &= \int_0^1 \mathbb{P}(X<a)ra^{-r-1}\,da+\int_1^{\infty}\mathbb{P}(X<a)ra^{-r-1}\,da\\
& \le C \int_0^1(\widetilde{T}_0^p+1)a^{\frac{p(p-d)}{d+p}}e^{\eta V(u_0)}ra^{-r-1}\,da.
\end{align*}
The above integral converges if and only if \( \frac{p(p-d)}{d+p}-r>0 \), which means \( r< \frac{p(p-d)}{d+p}\). By choosing \( p \) sufficiently large (\( p > r + d + 1 \)), we have
$$
\mathbb{E}[X^{-r}] \le C(N_*,\nu, \mathbf{p}_0,\eta) (\widetilde{T}_0^{p+d+1}+1)e^{\eta V(u_0)}
$$
Furthermore, when \( \widetilde{T}_0 \) is sufficiently small, it follows that
\[
\mathbb{E}|(\mathcal{N}_{_{T_0}}^l)^{-1}|^p \leq C(N_*,\nu,\mathbf{p}_{_0},\eta,p) \exp{(p\eta V(u_0))}.
\]
\end{proof}

We now prove the probabilistic spectral bounds for the partial Malliavin matrix, stated in Lemma \ref{spectral bound}. The proof is divided into two main steps. First, we establish a statement analogous to the generalized H\"{o}rmander condition (Lemma \ref{lower}). Then, by differentiating in time, we derive a series of implications (Lemma \ref{upper}); combining these two parts completes the proof. Here, the Lie brackets associated with the velocity field and the vector fields on the manifold, which appear in the generalized H\"{o}rmander condition, are naturally captured by the integral expression \(S_t^l Q^k\). Using It\^{o}'s formula, the integral expression given in \cite[Proposition 6.12]{JEMS} is as follows.
\begin{lemma} \label{SE}
Let \( E \) be a bounded vector field on \( \mathcal{H} \times M \) whose range belongs to \( \mathcal{H}_l \times TM \) and with two bounded derivatives. Then the following formula holds:
\begin{align}\label{SE2}
S_t^L E(u_t,\mathbf{p}_{_t}) &= E(u_{\tau_0},\mathbf{p}_{_{\tau_0}}) + \int_{\tau_0}^{t} S_s^l\, \bigl([\hat{F}, E]_l (u_s,\mathbf{p}_{_s}) - [\hat{A}, E]_l (u_s,\mathbf{p}_{_s})\bigr) \, ds \notag\\
&\quad + \frac{1}{2} \sum_{k \in \mathcal{Z}_0} \int_{\tau_0}^{t} S_s^l\, D^2 E(u_s,\mathbf{p}_{_s}) [Q^k, Q^k] \, ds \notag\\
&\quad + \int_{\tau_0}^{t} S_s^l\, D E(u_s,\mathbf{p}_{_s}) \hat{Q} \, dW_s,
\end{align}
Here, for any two differentiable vector fields \( F, E \) over \( \mathcal{H} \times M \), we denote
\[
[\hat{F}, E]_l := (D E_l)(u,\mathbf{p})\hat{F}(u,\mathbf{p}) - (D \hat{F}_l)(u,\mathbf{p})E(u,\mathbf{p}) = (D E)(u,\mathbf{p})\hat{F}(u,\mathbf{p}) - (D_l \hat{F}_l)(u,\mathbf{p})E(u,\mathbf{p})
\]
and
\[
[\hat{A}, E]_l (u,\mathbf{p}) := D_l E_l (u,\mathbf{p}) \hat{A}_l (u,\mathbf{p}) - \hat{A}_l E(u,\mathbf{p}) = D E (u,\mathbf{p}) \hat{A} (u,\mathbf{p}) - \hat{A}_l E(u,\mathbf{p}).
\]
\end{lemma}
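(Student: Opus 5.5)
The plan is to apply Itô's formula to the process $t\mapsto S_t^l E(u_t,\mathbf{p}_{_t})$ on $[\tau_0,t]$, viewing it as the product of the adapted finite-dimensional matrix process $S_t^l$ and the semimartingale $E(u_t,\mathbf{p}_{_t})$.

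\textbf{Step 1: the equation for $S_t^l$.} Since $R_{\tau_0,t}^l$ solves the linear ODE \eqref{PL}, i.e. $\partial_t R_t^l=\tilde L_t^l R_t^l$ with no stochastic differential, differentiating the identity $S_t^lR_t^l=\mathrm{Id}$ gives
\[
\partial_t S_t^l=-S_t^l\tilde L_t^l=-S_t^l\bigl(D_l\hat F_l(u_t,\mathbf{p}_{_t})-\hat A_l\bigr),\qquad S_{\tau_0}^l=\mathrm{Id}.
\]
This is legitimate because the equation is finite-dimensional, so $R^l$ is invertible. In particular $S_t^l$ is adapted, continuous, and of bounded variation.

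\textbf{Step 2: Itô's formula for $E(u_t,\mathbf{p}_{_t})$.} By \eqref{abstract system},
\[
dE(u_t,\mathbf{p}_{_t})=DE\,(\hat F-\hat A)\,dt+\tfrac12\sum_{k\in\mathcal Z_0}D^2E[Q^k,Q^k]\,dt+DE\,\hat Q\,dW_t .
\]
Justifying this is the main technical obstacle, because $u_t$ lives in the infinite-dimensional space $\mathcal H$ and $\hat A u_t$ is only in $H^3$. I would proceed by first applying Itô's formula to Galerkin truncations, or to $E\circ P_N$. The bounds on $E$, $DE$ and $D^2E$ then let us pass to the limit. The term $DE\,\hat A$ is well defined along the flow because $u_t\in\mathcal H=H^5$ gives $\Delta u_t\in H^3$, $E$ has bounded derivatives, and the moment bounds of Proposition \ref{base-posedness-ergodicity} hold. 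Alternatively, one can invoke the mild Itô formula as in \cite[Proposition 6.12]{JEMS}.

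\textbf{Step 3: the product rule.} Since $S^l$ has bounded variation, the covariation term vanishes, and
\[
d\bigl(S_t^lE\bigr)=S_t^l\bigl(-D_l\hat F_l E+\hat A_lE\bigr)dt+S_t^l\,dE .
\]
Substituting Step 2 and grouping terms gives:
\begin{itemize}
\item $S_t^l\bigl(DE\,\hat F-D_l\hat F_lE\bigr)=S_t^l[\hat F,E]_l$;
\item $-S_t^l\bigl(DE\,\hat A-\hat A_lE\bigr)=-S_t^l[\hat A,E]_l$;
\item the Itô correction and the martingale term as written in the statement.
\end{itemize}
Since $E$ takes values in $\mathcal H_l\times TM$, we have $D_l\hat F_l E=D\hat F_l E$. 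Because $\hat A$ is diagonal in Fourier modes, $\hat A_lE=\Pi_l\hat AE$. This matches the two expressions given for each bracket.

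Integrating from $\tau_0$ to $t$ and using $S^l_{\tau_0}=\mathrm{Id}$ yields \eqref{SE2}.

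On the manifold component, the derivative $D$ is interpreted through the linear extension to $\mathcal H_l\times\mathbb R^{2d}$, as in the proof of Proposition \ref{v-estimate-p}. This makes the product rule and Itô's formula purely Euclidean.
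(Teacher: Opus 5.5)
Your proposal is correct and is essentially the paper's argument. The paper gives no separate proof: it obtains \eqref{SE2} by applying It\^o's formula to the product $S_t^lE(u_t,\mathbf{p}_{_t})$, as in \cite[Proposition 6.12]{JEMS}, and your Steps 1--3 carry out exactly that computation. Your Step 1 uses the correctly ordered equation $\partial_tS_t^l=-S_t^l\bigl(D_l\hat F_l-\hat A_l\bigr)$, with right multiplication, which is what makes the brackets appear; the left-multiplied equation written in the paper's appendix would not produce them.

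Two small caveats. First, the computation yields $[\hat A,E]_l$ in the form $DE\,\hat A-\hat A_lE$, and this agrees with $D_lE_l\,\hat A_l-\hat A_lE$ only when $D_hE\,\hat A_h=0$ (e.g.\ when $E$ depends on $u$ only through $u^l$), so "matches both expressions" needs that qualification. Second, $DE\,\hat A u_s$ is meaningful for $s\ge\tau_0>0$ because of parabolic smoothing of $u_s$, not merely because $\Delta u_s\in H^3$.
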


For convenience, we define the following operator \( \Upsilon_l \) that maps smooth vector fields on \( \mathcal{H} \times M \) to smooth vector fields on \( \mathcal{H} \times M \) with range in \( \mathcal{H}_l \times TM \), defined by
\begin{align} \label{Upsilon}
\Upsilon_l E := [
\hat{F}, E]_l - [\hat{A}, E]_l + \frac{1}{2} \sum_{k \in \mathcal{Z}_0} D^2 E[Q^k, Q^k].
\end{align}
We first present a lower bound result, which is essentially analogous to the generalized H\"{o}rmander condition (c.f. \cite{JFA}).
\begin{lemma} \label{lower}
For any $(u_t^l,\mathbf{p}_{_t})\in \mathcal{H}_l\times M$, there exists a constant $C(N_*,\nu)>0$ such that the lower-bound
\[
\max\left\{ |\langle Q^k, \mathfrak{h}^l \rangle_l|,\; |\langle \Upsilon_l Q^k, \mathfrak{h}^l \rangle_l| : k \in \mathcal{Z}_0 \right\} \ge \frac{C(N_*,\nu)}{1+\|u^l\|_{\mathcal{H}_l}} \|\mathfrak{h}^l\|_l
\]
holds for every \( \mathfrak{h}^l \in \mathcal{H}_l \times T_{\mathbf{p}} M \).
\end{lemma}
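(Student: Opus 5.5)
Write $\mathfrak{h}^l=(\mathfrak{h}_u,\mathfrak{h}_x,\mathfrak{h}_v)\in\mathcal{H}_l\times T_{\mathbf{x}}\mathbb{T}^2\times T_vS^1$. The argument splits into two regimes, according to whether the velocity component or the manifold component of $\mathfrak{h}^l$ carries most of its norm.

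\emph{Step 1: the velocity component.} Since $\mathcal{Z}_0$ contains every mode $0<|k|\le N_*$ and $q_k\neq0$, the family $\{Q^k\}_{k\in\mathcal{Z}_0}=\{q_ke_k\gamma_k\}$ is an orthogonal basis of $\mathcal{H}_l$. (For each $k$ the divergence-free direction is the single vector $\gamma_k$.) Hence
\[
\max_k|\langle Q^k,\mathfrak{h}^l\rangle_l|\ge c(N_*)\|\mathfrak{h}_u\|_{\mathcal{H}_l}.
\]
This is the only place the mild degeneracy is used. So if $\|\mathfrak{h}_u\|\ge\delta\|\mathfrak{h}^l\|_l$ for a small $\delta$ fixed below, the claim holds with constant $c\delta$.

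\emph{Step 2: the manifold component.} It remains to treat $\|(\mathfrak{h}_x,\mathfrak{h}_v)\|\ge(1-\delta)\|\mathfrak{h}^l\|_l$. Here we compute $\Upsilon_lQ^k$ explicitly. The vector field $Q^k$ is constant in $u$ but acts on the manifold through $\hat F$. A direct computation from \eqref{Upsilon} gives:
\begin{itemize}
\item the $\mathcal{H}_l$ component is $-D_l\hat F_l\,Q^k+\nu\Delta Q^k$, i.e. $\Pi_l[B(Q^k,u)+B(u,Q^k)]-\nu|k|^2Q^k$, which is bounded by $C(N_*)(1+\|u^l\|)$;
\item the manifold component is $-(Q^k(x),\ \Pi_vDQ^k(x)v)$, up to sign and the factor $q_k$, because $DQ^k\equiv0$ in the manifold variables and the $\mathfrak{h}_u$-derivative of $(u(x),\Pi_vDu(x)v)$ in direction $Q^k$ is exactly this. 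The Itô correction term vanishes, since $D^2Q^k=0$.
\end{itemize}

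\emph{Step 3: spanning on the tangent bundle.} We then verify the spanning condition
\[
\operatorname{span}_k\{(e_k(x)\gamma_k,\ e_k\Pi_v(\nabla\otimes\gamma_k)\dots)\}=T_{\mathbf{x}}\mathbb{T}^2\times T_vS^1
\]
uniformly in $(x,v)$. Using modes $k=(1,0),(0,1)$ together with both sine and cosine, one gets $\gamma_k\cos(k\cdot x)$ and $\gamma_k\sin(k\cdot x)$. A combination cancelling the position part, e.g. $\cos(k\cdot x)Q^{k}-\sin(k\cdot x)Q^{-k}$-type combinations, produces a nonzero $v$-component $\pm(k\cdot\ldots)\Pi_v\gamma_k$-like vectors, and at least one of them is nonzero since $\gamma_k\perp k$ for two independent $k$. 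Compactness of $\mathbb{T}^2\times S^1$ and continuity give a uniform lower bound: the $3\times m$ matrix of manifold components has smallest singular value at least $c_0(N_*)>0$.

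\emph{Step 4: combining the regimes.} Write
\[
\langle\Upsilon_lQ^k,\mathfrak{h}^l\rangle=\langle\text{manifold part},(\mathfrak{h}_x,\mathfrak{h}_v)\rangle+\langle\text{velocity part},\mathfrak{h}_u\rangle.
\]
The first term has maximum over $k$ at least $c_0(1-\delta)\|\mathfrak{h}^l\|$. The second is bounded by $C(N_*)(1+\|u^l\|)\delta\|\mathfrak{h}^l\|$. Choose
\[
\delta=\frac{c_0}{4C(N_*)(1+\|u^l\|)}.
\]
Then the max over both families is at least $\min(c\delta,\,c_0/2)\|\mathfrak{h}^l\|$, which is $\gtrsim(1+\|u^l\|)^{-1}\|\mathfrak{h}^l\|$. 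This yields the stated dependence on $\|u^l\|_{\mathcal{H}_l}$.

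The main obstacle is Step 3, the uniform spanning of $T_{\mathbf{x}}\mathbb{T}^2\times T_vS^1$ by the first-order brackets. It needs care at the $v$-component, where $\Pi_vDQ^k v$ could degenerate for special $(x,v)$. I would handle it by an explicit choice of a few low modes and a compactness argument.
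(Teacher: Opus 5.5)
Your proof is correct and is essentially the paper's argument. Both rest on the same three facts: $\{Q^k\}_{k\in\mathcal Z_0}$ spans $\mathcal H_l$; the velocity part of $\Upsilon_lQ^k=\hat A_lQ^k-D_l\hat F_lQ^k$ costs only a factor $C(N_*,\nu)(1+\|u^l\|_{\mathcal H_l})$; and the $u$-independent manifold part $q_k[\overline F,e_k\gamma_k]$ spans $T_{\mathbf x}\mathbb T^2\times T_vS^1$ uniformly. The paper simply cites Lemma~\ref{TM span} for the spanning, which you propose to re-derive; if you do, the $v$-component left after cancelling the position parts of the $\pm k$ brackets is $(k\cdot v)\Pi_v\gamma_k$, which vanishes exactly when $k\cdot v=0$, hence not for both of two independent $k$. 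Your $u$-dependent case split on $\|\mathfrak h_u\|$ is just a repackaging of the paper's direct triangle-inequality chain.

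One caveat, which the paper's bound on $C_{k,j}(u)$ shares. Controlling $\Pi_l[B(Q^k,u)+B(u,Q^k)]$ by $\|u^l\|_{\mathcal H_l}$ is legitimate only when $u\in\mathcal H_l$, as in the statement, because these terms also see the modes $N_*<|j|\le 2N_*$ of $u$. For a general state, as in the later application at $u_{\tau_0}$, the factor must be $1+\|u\|_{\mathcal H}$; the moment bounds handle this equally well.
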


Before proving the above lemma, we first state a {\emph{spanning condition on the tangent bundle of the manifold}}. This condition appears in different forms in the literature \cite{JEMS, NS}; here we present the version given in \cite{JEMS}. To this end, we rewrite equations \eqref{x}–\eqref{v} as follows:
\[
\frac{d}{dt} \begin{pmatrix}
x_t \\
v_t
\end{pmatrix} = \overline{F}(u_t, x_t, v_t),
\]
where \( \overline{F}(u, x, v) \) is the vector field defined for each \((u, x, v) \in \mathcal{H} \times \mathbb{T}^2 \times S^{1}\) by
\begin{align}\label{F1}
\overline{F}(u, x, v) = \sum_{k \in \mathcal{Z}_0} \begin{pmatrix}
(u)_k e_k(x) \gamma_k \\
(u)_k (k \cdot v) e_{-k}(x)(\Pi_v \gamma_k)
\end{pmatrix} \in T_x \mathbb{T}^2 \times T_v {S}^{1},
\end{align}
where \( (u)_k = \frac{1}{\pi(2\pi)^{d-1}} \langle u, e_k \gamma_k \rangle_{L^2} \). Moreover, it directly follows that
\[
[e_k \gamma_k, \overline{F}](x, v) = \begin{pmatrix} e_k(x) \gamma_k \\ (k \cdot v) e_{-k}(x)(\Pi_v \gamma_k) \end{pmatrix}.
\]
\begin{lemma}\emph{\cite[Lemma 5.3]{JEMS}} \label{TM span}
Let \( k^1, k^2 \) be linearly independent elements of \(\mathbb{Z}_0^2\) and define  
\[
K = \{k^1, k^2\} \cup \{-k^1, -k^2\} \subseteq \mathbb{Z}_0^2.
\]  
Then at each point \((x, v) \in \mathbb{T}^2 \times {S}^{1}\), we have  
\[
\operatorname{span}\{[e_k \gamma_k, \overline{F}](x, v): k \in K\} = T_x \mathbb{T}^2 \times T_v {S}^{1}.
\]
\end{lemma}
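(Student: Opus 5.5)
The plan is a direct pointwise computation: from the four vectors $[e_k\gamma_k,\overline{F}](x,v)$, $k\in K$, I will build linear combinations that separate the position component from the projective component. Throughout I use the explicit formula stated just before the lemma,
\[
[e_k \gamma_k, \overline{F}](x, v) = \begin{pmatrix} e_k(x) \gamma_k \\ (k \cdot v)\, e_{-k}(x)\,\Pi_v \gamma_k \end{pmatrix}.
\]
Since $T_x\mathbb{T}^2\times T_vS^1$ is three-dimensional, it suffices to exhibit three linearly independent vectors in the span.

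\emph{Step 1: reduce each pair $\{k,-k\}$ to two simple vectors.} Without loss of generality $k\in\mathbb{Z}^2_+$; otherwise swap the roles of $k$ and $-k$. Write $\theta=k\cdot x$ and $\gamma=\gamma_k$. By \eqref{ek}, $e_k(x)=\sin\theta$ and $e_{-k}(x)=\cos\theta$. From $\gamma_k=-k^\perp/|k|^2$ we get $\gamma_{-k}=-\gamma$, and also $(-k)\cdot v=-(k\cdot v)$. Substituting,
\[
a_k:=[e_k\gamma_k,\overline{F}]=\begin{pmatrix}\sin\theta\,\gamma\\ (k\cdot v)\cos\theta\,\Pi_v\gamma\end{pmatrix},\qquad
a_{-k}:=[e_{-k}\gamma_{-k},\overline{F}]=\begin{pmatrix}-\cos\theta\,\gamma\\ (k\cdot v)\sin\theta\,\Pi_v\gamma\end{pmatrix}.
\]
Using $\sin^2\theta+\cos^2\theta=1$, the combinations $\sin\theta\,a_k-\cos\theta\,a_{-k}$ and $\cos\theta\,a_k+\sin\theta\,a_{-k}$ equal
\[
(\gamma_k,0)\qquad\text{and}\qquad \bigl(0,(k\cdot v)\Pi_v\gamma_k\bigr).
\]
The only care needed here is the sign bookkeeping for $e_{\pm k}$ and $\gamma_{\pm k}$ under the convention \eqref{ek}.

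\emph{Step 2: span the position directions.} Applying Step 1 to $k^1$ and $k^2$ puts $(\gamma_{k^1},0)$ and $(\gamma_{k^2},0)$ in the span. Each $\gamma_{k^i}$ is a nonzero multiple of the rotation $(k^i)^\perp$. Since $k^1,k^2$ are linearly independent, so are $\gamma_{k^1},\gamma_{k^2}$, and therefore they span $T_x\mathbb{T}^2\cong\mathbb{R}^2$.

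\emph{Step 3: span the projective direction.} I need some $i$ with $(k^i\cdot v)\,\Pi_v\gamma_{k^i}\neq 0$. Since $\Pi_v$ projects onto $v^\perp$, we have $\Pi_v\gamma_k=0$ exactly when $\gamma_k\parallel v$, that is, when $k^\perp\parallel v$, that is, when $k\cdot v=0$. So the projective vector for $k^i$ vanishes if and only if $k^i\cdot v=0$. The vectors $k^1,k^2$ cannot both be orthogonal to the unit vector $v$, because they span $\mathbb{R}^2$. Hence some $i$ yields a nonzero vector in $T_vS^1$, which is one-dimensional. Together with Step 2 this gives three independent vectors, so the span is all of $T_x\mathbb{T}^2\times T_vS^1$.

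Step 3 is the only place where the argument could fail, since the projective direction degenerates when $k\cdot v=0$. Using two independent wavevectors is exactly what excludes this degeneration.
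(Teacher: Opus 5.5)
Your proof is correct and complete: pairing $k$ with $-k$ and rotating by $\theta=k\cdot x$ isolates $(\gamma_k,0)$ and $\bigl(0,(k\cdot v)\Pi_v\gamma_k\bigr)$, and linear independence of $k^1,k^2$ rules out $k^1\cdot v=k^2\cdot v=0$, which gives three independent vectors. The paper gives no proof of its own and only cites \cite[Lemma 5.3]{JEMS}; your argument is the direct pointwise verification that this citation stands for (any normalization constant in \eqref{F1} only rescales each bracket by a nonzero factor and does not affect the span).
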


\begin{proof}[Proof of Lemma \ref{lower}]
Note that
\begin{align}\label{U-1}
\Upsilon_l Q^k=q_k[\overline{F},e_k\gamma_k]-q_k[B(u,u),e_k\gamma_k]_l-q_k[\hat{A},e_k\gamma_k]_l.
\end{align}
Set $q_{-}:=\min\limits_{k\in \mathcal{Z}_0}|q_k|>0$, then we have 
\begin{align}\label{g-1}
\max\limits_{k\in \mathcal{Z}_0}\langle e_k\gamma_k,\,\mathfrak{h}^l\rangle_l\le q_{-}^{-1}\max\limits_{k\in \mathcal{Z}_0}\left\{ |\langle Q^k, \mathfrak{h}^l \rangle_l|,\; |\langle \Upsilon_l Q^k, \mathfrak{h}^l \rangle_l|\right\}.
\end{align}
Noted that $A(e_k)=\lambda_k e_k,\, \lambda_k=\nu|k|^2$ and 
$$
[\hat{A},e_k\gamma_k]_l = D(e_k\gamma_k)\hat{A}-\hat{A}_l(e_k\gamma_k)=-\gamma_k\hat{A}_l(e_k),
$$
then we have 
\begin{align} \label{A-1}
|\langle[\hat{A},e_k\gamma_k]_l,\,\mathfrak{h}^l\rangle_l|=\lambda_k|\langle e_k\gamma_k,\,\mathfrak{h}^l\rangle_l|\le C(N_*,\nu) \,|\langle e_k\gamma_k,\,\mathfrak{h}^l\rangle_l|.
\end{align}

Simultaneously, observe that
$$
-[B(u,u),e_k\gamma_k]_l=\Pi_l[B(u,e_k\gamma_k)+B(e_k\gamma_k,u)]
$$
Let \(\{\phi_1, \ldots, \phi_{m}\}\) be an orthonormal basis of \(\mathcal{H}_l\), where each \(\phi_k\) is of the form \(e_k \gamma_k\). Since \(\mathcal{H}_l\) is finite-dimensional, the linear operator \(D(B(u, u)) : \mathcal{H}_l \to \mathcal{H}\) admits a matrix representation. Consequently, for any \(\mathfrak{h}^{l,u} \in \mathcal{H}_l\), we have
$$
\Pi_l(D(B(u, u))\cdot \mathfrak{h}^{l,u})=\sum_{k=1}^{m}a_k\,\Pi_l(D(B(u, u))\, \phi_k),\quad \text{where } \,a_k=\langle\mathfrak{h}^{l,u},\phi_k\rangle_{l,u}.
$$
Noted that $\Pi_l(D(B(u, u))\, \phi_k)\in \mathcal{H}_l$, then we have that
$$
\Pi_l(D(B(u, u))\, \phi_k) = \sum_{j=1}^{m}C_{k,j}(u)\phi_j, \quad \text{where } \,C_{k,j}(u)=\langle\Pi_l(D(B(u, u))\, \phi_k,\, \phi_j\rangle_{l}.
$$
Moreover, we obtain that 
$$
[B(u,u),e_k\gamma_k]_l=\Pi_l(D(B(u,u))\cdot \phi_k)=\sum_{j=1}^{m}C_{k,j}(u)\phi_j.
$$
We now establish a uniform estimate for $C_{k,j}(u)$ with respect to $(k,j)$. 
\begin{align*}
|C_{k,j}(u)|\le |\langle \Pi_l(B(\phi_k,u)),\,\phi_j\rangle_l|\,+\,|\langle \Pi_l(B(u,\phi_k)),\,\phi_j\rangle_l|\le C(N_*)\|u^l\|_{\mathcal{H}_l}.
\end{align*}
Therefore, 
\begin{align}\label{B-1}
|\langle[B(u,u),e_k\gamma_k]_l,\,\mathfrak{h}^l\rangle_l|\le C(N_*)\,\|u^l\|_{\mathcal{H}_l}\, \sum_{k=1}^{m}|\langle e_k\gamma_k,\,\mathfrak{h}^l \rangle_l|\le C(N_*)m\,\|u^l\|_{\mathcal{H}_l}\max\limits_{k\in \mathcal{Z}_0}|\langle e_k\gamma_k,\,\mathfrak{h}^l\rangle_l|.
\end{align}
Putting together \eqref{U-1}–\eqref{B-1} yields
\begin{align}\label{4.8}
	\max\limits_{k \in \mathcal{Z}_0} \bigl\{ \bigl| \bigl\langle e_k\gamma_k,\, \mathfrak{h}^l \bigr\rangle_l \bigr|,\,\bigl| \bigl\langle [\overline{F}, e_k \gamma_k]_l,\, \mathfrak{h}^l \bigr\rangle_l \bigr|\bigr\}\le C(N_*,\nu)\,(1+\|u^l\|_{\mathcal{H}_l})\max\limits_{k \in \mathcal{Z}_0}\left\{ |\langle Q^k, \mathfrak{h}^l \rangle_l|,\; |\langle \Upsilon_l Q^k, \mathfrak{h}^l \rangle_l| \right\}.
\end{align}
On the other hand, by Lemma \ref{TM span} (the manifold spanning condition), for any \(  \mathfrak{h}^l\in \mathcal{H}_l\times T_{\mathbf{p}}M \) , we have
\[
\max \bigl\{ \bigl| \bigl\langle e_k\gamma_k,\, \mathfrak{h}^l \bigr\rangle_l \bigr|,\,\bigl| \bigl\langle [\overline{F}, e_k \gamma_k]_l,\, \mathfrak{h}^l \bigr\rangle_l \bigr|: k \in \mathcal{Z}_0\bigr\} \ge C \|\mathfrak{h}^l\|_l >0.
\]
Then, we have 
$$
C \|\mathfrak{h}^l\|_l\le C(N_*,\nu)\,(1+\|u^l\|_{\mathcal{H}_l})\max\limits_{k \in \mathcal{Z}_0}\left\{ |\langle Q^k, \mathfrak{h}^l \rangle_l|,\; |\langle \Upsilon_l Q^k, \mathfrak{h}^l \rangle_l| \right\}.
$$
The proof of the lemma is thereby concluded.
\end{proof}

In what follows, we proceed to present a `upper bound' for 
$|\langle Q^k, \mathfrak{h}^l \rangle_l|,\, |\langle \Upsilon_l Q^k, \mathfrak{h}^l \rangle_l|$. This is achieved through the operation of differentiation in time.

\begin{lemma} \label{upper}
Fix \( \tau_0\ge 1/2,\, \widetilde{T}_0 > 0 \). There is a positive constant \( \mathcal{E}(\widetilde{T}_0) > 0\) such that the following holds. Fix any \( \eta > 0 \), then for every \( \varepsilon \in (0, \mathcal{E}(\widetilde{T}_0)) \) there are a set \( \Omega_{\varepsilon}^* \) and a constant \( C(\eta, N_*,\widetilde{T}_0,\mathbf{p}_0)>0 \), locally bounded in $\mathbf{p}_0\in M$, such that
\[
\mathbb{P}((\Omega_{\varepsilon}^*)^{c}) \leq C(\eta, N_*,\widetilde{T}_0,\mathbf{p}_0) \varepsilon e^{\eta V{(u_0)}}
\]
and on \( \Omega_{\varepsilon}^* \) one has 
\begin{align}
\langle \mathcal{N}_{_{T_0}}^l \mathfrak{h}^l, \mathfrak{h}^l \rangle_l \leq \varepsilon \|\mathfrak{h}^l|_l^2 \Longrightarrow
\begin{cases} 
\sup\limits_{k\in \mathcal{Z}_0}|\langle Q^k,\,\mathfrak{h}^l\rangle_l|\le \varepsilon^{1/8}\|\mathfrak{h}^l|_l, \\ 
\sup\limits_{k\in \mathcal{Z}_0}\sup\limits_{t\in [\tau_0,T_0]}|\langle S_t^l\Upsilon_l Q^k,\,\mathfrak{h}^l\rangle_l|\le \varepsilon^{1/{80}}\|\mathfrak{h}^l\|_l.
\end{cases}
\end{align}
which is valid for any \( \mathfrak{h}^l \in \mathcal{H}_l\times T_{\mathbf{p}}M \).
\end{lemma}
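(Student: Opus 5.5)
The plan is the standard Norris-lemma argument, as in \cite[Section 6]{JEMS} and \cite{HM06}, applied to the adapted finite-dimensional process $t\mapsto \langle S_t^l Q^k,\mathfrak{h}^l\rangle_l$ on $[\tau_0,T_0]$.

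\textbf{Step 1: from the quadratic form to an $L^2$ bound.} Since $Q^k$ is a constant field, we have
\[
\langle \mathcal{N}_{_{T_0}}^l\mathfrak{h}^l,\mathfrak{h}^l\rangle_l=\sum_{k\in\mathcal{Z}_0}\int_{\tau_0}^{T_0}|\langle S_s^lQ^k,\mathfrak{h}^l\rangle_l|^2\,ds .
\]
So the hypothesis $\langle \mathcal{N}_{_{T_0}}^l\mathfrak{h}^l,\mathfrak{h}^l\rangle_l\le\varepsilon\|\mathfrak{h}^l\|_l^2$ gives an $L^2([\tau_0,T_0])$ bound of order $\varepsilon^{1/2}$ on each $f_k(t):=\langle S_t^lQ^k,\mathfrak{h}^l\rangle_l$.

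\textbf{Step 2: semimartingale decomposition.} Applying Lemma \ref{SE} with $E=Q^k$ (constant, so $DE=0$ and $D^2E=0$) gives
\[
f_k(t)=\langle Q^k,\mathfrak{h}^l\rangle_l+\int_{\tau_0}^t\langle S_s^l\Upsilon_lQ^k,\mathfrak{h}^l\rangle_l\,ds .
\]
Thus $f_k$ is $C^1$ in time with derivative $g_k(t):=\langle S_t^l\Upsilon_lQ^k,\mathfrak{h}^l\rangle_l$. Applying Lemma \ref{SE} again with $E=\Upsilon_lQ^k$ shows that $g_k$ is a semimartingale:
\[
g_k(t)=\langle\Upsilon_lQ^k,\mathfrak{h}^l\rangle_l+\int_{\tau_0}^t a_k(s)\,ds+\sum_j\int_{\tau_0}^t b_{kj}(s)\,dW^j_s ,
\]
with $a_k=\langle S^l\Upsilon_l^2Q^k,\mathfrak{h}^l\rangle_l$ and $b_{kj}=\langle S^lD(\Upsilon_lQ^k)Q^j,\mathfrak{h}^l\rangle_l$. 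By \eqref{U-1}, $\Upsilon_lQ^k$ is affine in $u^l$ with smooth dependence on $(x,v)$, so $E=\Upsilon_lQ^k$ has two derivatives that are bounded polynomially in $u^l$. This suffices after localizing on the good event.

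\textbf{Step 3: the good event.} Define $\Omega_\varepsilon^*$ as the intersection of two events. The first is $\{\sup_{\tau_0\le t\le T_0}(\|S_t^l\|+\|u_t\|_{\mathcal{H}}+\|R_t^l\|)\le\varepsilon^{-\beta}\}$ for a small $\beta>0$. The second is the Norris-lemma good event for the semimartingale $g_k$ with exponents matched to $\varepsilon^{1/8}$ (see \cite[Lemma 6.14]{JEMS}, \cite{HM11a}). By Markov's inequality, the moment bounds \eqref{Sl}, \eqref{J}, and the super-Lyapunov estimate \eqref{NS-2.5} (each with weight $\eta$), the first event has complement probability at most $C\varepsilon e^{\eta V(u_0)}$ once enough moments are used. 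The Norris event has polynomially small complement by the exponential martingale inequality. Since $M$ is compact, all constants are locally uniform in $\mathbf{p}_0$.

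\textbf{Step 4: interpolation on the good event.}
\begin{itemize}
\item[(a)] On $\Omega_\varepsilon^*$, $\|f_k\|_{C^1}\le\varepsilon^{-\beta}\|\mathfrak{h}^l\|_l$. The interpolation inequality $\|f\|_\infty\le C\max\{\|f\|_{L^2}\widetilde T_0^{-1/2},\ \|f\|_{L^2}^{2/3}\|f'\|_\infty^{1/3}\}$ then gives $\sup_t|f_k|\le\varepsilon^{1/8}\|\mathfrak{h}^l\|_l$ provided $\varepsilon<\mathcal{E}(\widetilde T_0)$. In particular this holds at $t=\tau_0$, which is the first implication.
\item[(b)] Since $f_k$ is small uniformly on $[\tau_0,T_0]$ and $f_k'=g_k$, the Norris lemma for the semimartingale $g_k$ converts smallness of $\int g_k$ into smallness of $\sup_t|g_k|$. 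This costs one further root (e.g. $\varepsilon^{1/8}\mapsto\varepsilon^{1/80}$). The required bounds on $a_k$ and $b_{kj}$ come from the event in Step 3.
\end{itemize}
The implication must hold simultaneously for all $\mathfrak{h}^l$. This is automatic, because $\Omega_\varepsilon^*$ is defined through operator norms and a Norris event stated for the matrix-valued semimartingale $S_t^l\Upsilon_lQ^k$, not for a fixed $\mathfrak{h}^l$.

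\textbf{Main obstacle.} I expect the hardest part to be Step 4(b), the Norris-lemma step. The semimartingale coefficients involve $S^l_t$ and $u_t$, which are unbounded, so a quantitative Norris lemma valid uniformly in $\mathfrak{h}^l$ requires careful localization. My first attempt would be to stop at the exit time of $\{\|u_t\|_{\mathcal{H}}+\|S_t^l\|\le\varepsilon^{-\beta}\}$, which replaces the truncated process $w^\rho_t$ of \cite{JEMS}, and to tune $\beta$ so that the exponents multiply to $1/80$.
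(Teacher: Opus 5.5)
Your proposal is correct and follows essentially the same route as the paper. Both arguments use the identity $S_t^lQ^k=Q^k+\int_{\tau_0}^tS_s^l\Upsilon_lQ^k\,ds$ from Lemma \ref{SE}, then an interpolation on a good event to get $\sup_t|\langle S_t^lQ^k,\mathfrak h^l\rangle_l|\le\varepsilon^{1/8}$, and finally H\"older regularity of the semimartingale $\langle S_t^l\Upsilon_lQ^k,\mathfrak h^l\rangle_l$ (via BDG and Kolmogorov) to upgrade smallness of its time integral to the bound $\varepsilon^{1/80}$; the paper packages both interpolation steps into Lemma \ref{JFA-Lemma 6.2}, applied with $\alpha=1$ and then $\alpha=1/4$.

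One caution concerns Step 4(b). Uniformity over $\mathfrak h^l$ is automatic only in this deterministic H\"older-interpolation form, where the good event is a moment-controlled bound on operator-valued $C^{1,\alpha}$ norms. A genuine exponential-martingale Norris lemma produces exceptional sets that depend on $\mathfrak h^l$, so that version would need an extra net argument over the unit sphere.
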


Before proceeding to the proof of Lemma \ref{upper}, we first state a very useful lemma. It was developed from \cite[Lemma 6.14]{HM11a}, and its final form is given in \cite[Lemma 6.2]{JFA}.

\begin{lemma}\emph{\cite[Lemma 6.2]{JFA}} \label{JFA-Lemma 6.2}
Fix \( \tau_0\ge 1/2,\, \widetilde{T}_0 > 0 \), \(\alpha \in (0,1]\) and an index set \( \mathcal{I} \). Consider a collection of random functions \( g_{_{\mathfrak{h}^l}} \) taking values in \( C^{1,\alpha} ([\tau_0, {T}_0]) \) and indexed by \(\mathfrak{h}^l \in \mathcal{I}\). Define, for each \(\varepsilon > 0\),
\[
\Lambda_{\varepsilon,\alpha} :=\bigcup_{\mathfrak{h}^l \in \mathcal{I}} \Lambda_{\varepsilon,\alpha}^{\mathfrak{h}^l},
\]
where
\begin{align} \label{Gamma}
\Lambda_{\varepsilon,\alpha}^{\mathfrak{h}^l} := 
\left\{
\sup_{t \in [\tau_0, {T}_0]} |g_{_{\mathfrak{h}^l}}(t)| \leq \varepsilon
\quad{\emph{\text{and}}}\quad
\sup_{t \in [\tau_0, {T}_0]} |g_{_{\mathfrak{h}^l}}'(t)| > \varepsilon^{\frac{\alpha}{2(1+\alpha)}}
\right\}. 
\end{align}
Then, there exists \(\varepsilon_0 = \varepsilon_0(\alpha, \widetilde{T}_0) \big(:=({\widetilde{T}_0}/4)^{2(1+\alpha)/(2+\alpha)}\big)> 0\) such that for every \(\varepsilon \in (0, \varepsilon_0)\),
\begin{align} \label{Gamma-2}
\mathbb{P}\bigl(\Lambda_{\varepsilon,\alpha}\bigr) \leq C\, \varepsilon\; 
\mathbb{E}\Big[ \sup\limits_{\mathfrak{h}^l \in I}\, \|g_{_{\mathfrak{h}^l}}\|_{C^{1,\alpha}([\tau_0, {T}_0])}^{\,2/\alpha} \Big].
\end{align}
\end{lemma}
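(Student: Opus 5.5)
The bound comes from a deterministic interpolation inequality between $\|f\|_\infty$, $\|f'\|_\infty$ and the Hölder seminorm of $f'$, followed by Markov's inequality. The index set $\mathcal{I}$ plays no role beyond a supremum: I will show that on $\Lambda^{\mathfrak{h}^l}_{\varepsilon,\alpha}$ the norm $\|g_{\mathfrak{h}^l}\|_{C^{1,\alpha}}$ is forced to be at least of order $\varepsilon^{-\alpha/2}$, with a constant that does not depend on $\mathfrak{h}^l$. Hence the union $\Lambda_{\varepsilon,\alpha}$ is contained in a single event about $\sup_{\mathfrak{h}^l}\|g_{\mathfrak{h}^l}\|_{C^{1,\alpha}}$, where measurability of this supremum is taken as part of the hypotheses.

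\emph{Step 1: deterministic interpolation.} Let $f\in C^{1,\alpha}([\tau_0,T_0])$ and set
\[
M:=\sup_{[\tau_0,T_0]}|f'|,\qquad H:=[f']_{C^\alpha}.
\]
Pick $t^*$ with $|f'(t^*)|=M$, say $f'(t^*)=M$. Choose an interval $I\ni t^*$ inside $[\tau_0,T_0]$ of length $\delta\le \widetilde T_0$; this is always possible, one-sided if $t^*$ is near an endpoint. On $I$ we have $f'\ge M-H\delta^\alpha$, so integrating over $I$ gives
\[
2\|f\|_\infty\ge \delta\,(M-H\delta^\alpha).
\]
Take $\delta=\min\{\widetilde T_0,(M/(2H))^{1/\alpha}\}$. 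This yields one of two alternatives:
\begin{itemize}
\item if $\delta=\widetilde T_0$, then $M\le 4\|f\|_\infty/\widetilde T_0$;
\item if $\delta=(M/(2H))^{1/\alpha}$, then $M^{(1+\alpha)/\alpha}\le C\,\|f\|_\infty\,H^{1/\alpha}$, with $C$ depending only on $\alpha$.
\end{itemize}

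\emph{Step 2: apply on $\Lambda^{\mathfrak{h}^l}_{\varepsilon,\alpha}$.} Take $f=g_{\mathfrak{h}^l}$, so $\|f\|_\infty\le\varepsilon$ and $M>\varepsilon^{\alpha/(2(1+\alpha))}$.

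The first alternative would require $\varepsilon^{\alpha/(2(1+\alpha))}<4\varepsilon/\widetilde T_0$, that is $\varepsilon^{(2+\alpha)/(2(1+\alpha))}>\widetilde T_0/4$. This is exactly what is ruled out by $\varepsilon<\varepsilon_0=(\widetilde T_0/4)^{2(1+\alpha)/(2+\alpha)}$, which explains the choice of $\varepsilon_0$.

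So the second alternative holds. Since $M^{(1+\alpha)/\alpha}>\varepsilon^{1/2}$, it gives
\[
\varepsilon^{1/2}<C\varepsilon H^{1/\alpha},\qquad\text{hence}\qquad H^{2/\alpha}\ge c\,\varepsilon^{-1}.
\]
Because $H\le\|g_{\mathfrak{h}^l}\|_{C^{1,\alpha}}$, we conclude
\[
\Lambda_{\varepsilon,\alpha}\subset\Big\{\sup_{\mathfrak{h}^l\in\mathcal{I}}\|g_{\mathfrak{h}^l}\|^{2/\alpha}_{C^{1,\alpha}}\ge c\,\varepsilon^{-1}\Big\}.
\]

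\emph{Step 3: Markov's inequality.} Applying Markov's inequality to the event above gives
\[
\mathbb{P}(\Lambda_{\varepsilon,\alpha})\le c^{-1}\varepsilon\,\mathbb{E}\sup_{\mathfrak{h}^l\in\mathcal{I}}\|g_{\mathfrak{h}^l}\|_{C^{1,\alpha}}^{2/\alpha},
\]
which is the claimed bound \eqref{Gamma-2}.

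The only delicate point is Step 1, specifically the endpoint and short-interval case. It must be handled by letting the interval stay inside $[\tau_0,T_0]$ and capping its length at $\widetilde T_0$. This cap is precisely what produces the threshold $\varepsilon_0$, and it is what keeps the constant uniform in $\mathfrak{h}^l$.
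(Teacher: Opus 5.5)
Your proof is correct and follows the standard argument behind the cited \cite[Lemma 6.2]{JFA}; the paper itself gives no proof and only quotes that result. The cited argument uses an interpolation bound of the form $\|f'\|_\infty\le C\|f\|_\infty\max\{\widetilde T_0^{-1},\|f\|_\infty^{-1/(1+\alpha)}[f']_{C^\alpha}^{1/(1+\alpha)}\}$, which you derive directly in Step 1, excludes the first branch precisely because $\varepsilon<\varepsilon_0=(\widetilde T_0/4)^{2(1+\alpha)/(2+\alpha)}$, and then applies Markov's inequality to $\sup_{\mathfrak{h}^l}\|g_{\mathfrak{h}^l}\|_{C^{1,\alpha}}^{2/\alpha}$, all exactly as you do.
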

\begin{remark}
Observe that
\[
\Lambda_{\varepsilon,\alpha}^c = \bigcap_{\mathfrak{h}^l \in \mathcal{I}} \left\{ \sup_{t \in [\tau_0, {T}_0]} |g_{_{\mathfrak{h}^l}}(t)| > \varepsilon\, {\emph{\text{ or }}} \sup_{t \in [\tau_0, {T}_0]} |g_{_{\mathfrak{h}^l}}'(t)| \leq \varepsilon^{\frac{\alpha}{2(1+\alpha)}} \right\}.
\]
Thus, on \(\Lambda_{\varepsilon,\alpha}^c\),
\begin{align} \label{Gamma-3}
\sup_{t \in [\tau_0, {T}_0]} |g_{_{\mathfrak{h}^l}}(t)| < \varepsilon \implies \sup_{t \in [\tau_0, {T}_0]} |g_{_{\mathfrak{h}^l}}'(t)| \leq \varepsilon^{\frac{\alpha}{2(1+\alpha)}} 
\end{align}
for every \(\mathfrak{h}^l \in \mathcal{I}\).
\end{remark}

\begin{proof}[Proof of Lemma \ref{upper}]
Our proof proceeds in two steps, consisting of two operations of time differentiation.\\
\textbf{Step 1.} For any \(\mathfrak{h}^l \in \mathcal{H}_l\times T_{\mathbf{p}}M\) with \(\|\mathfrak{h}^l\| = 1\), and any $k\in \mathcal{Z}_0)$, we define
	\[
	g_{_{\mathfrak{h}^l}}(t) := \int_{\tau_0}^{t} \langle S_s^l Q^k, \mathfrak{h}^l \rangle_l^2  ds \leq \sum_{k\in \mathcal{Z}_0} \int_{\tau_0}^{t} \langle S_s^l Q^k, \mathfrak{h}^l \rangle_l^2  ds = \langle \mathcal{N}_{_{T_0}}^l\mathfrak{h}^l,\, \mathfrak{h}^l \rangle_l.
	\]
Note that
\begin{equation*}
	g_{_{\mathfrak{h}^l}}'(t) = \langle S_t^l Q^k, \mathfrak{h}^l \rangle_l^2, 
\end{equation*}
\begin{equation*}
	g_{_{\mathfrak{h}^l}}''(t)=2\langle S_t^l Q^k, \mathfrak{h}^l\rangle_l\, \langle \partial_tS_t^l Q^k, \mathfrak{h}^l\rangle_l = -2\langle S_t^l Q^k, \mathfrak{h}^l\rangle_l\,  \langle \widetilde{L}_t^lS_{t}^lQ^k, \mathfrak{h}^l\rangle_l.
\end{equation*}
Let \(\widetilde{\Omega}_{\varepsilon} := \widetilde{\Lambda}_{\varepsilon, 1}^c\), where \(\widetilde{\Lambda}_{\varepsilon, \alpha}\) is as in \eqref{Gamma} with \(\mathcal{I} := \{\mathfrak{h}^l \in \mathcal{H}_l\times T_{\mathbf{p}}M : \|\mathfrak{h}^l\|_l = 1\}\), i.e.,
\begin{equation*}
	\widetilde{\Lambda}_{\varepsilon,1} = \bigcup_{\mathfrak{h}^l \in \mathcal{I}}  
	\left\{ \sup_{t \in [\tau_0,T_0]} |g_{_{\mathfrak{h}^l}}(t)| \leq \varepsilon \quad \text{and} \sup_{t \in [\tau_0,T_0]} |g_{_{\mathfrak{h}^l}}'(t)| > \varepsilon^{\frac{1}{4}}
	\right\}.
\end{equation*}
Then by Lemma \ref{JFA-Lemma 6.2} with \(\alpha = 1\), one has
\[
	\mathbb{P}(\widetilde{\Omega}_{\varepsilon}^c) \leq C \epsilon \mathbb{E} \bigg( \sup_{\substack{t \in [\tau_0, T_0] \\ \|\mathfrak{h}^l\|_l = 1}} \big|2\langle S_t^l Q^k, \mathfrak{h}^l\rangle_l\,  \langle \widetilde{L}_t^lS_{t}^lQ^k, \mathfrak{h}^l\rangle_l\big|^2 \bigg) .
\]
Note that 
\[ \left| \langle \widetilde{L}_t^lS_{t}^lQ^k, \mathfrak{h}^l\rangle_l\right| \leqslant \|\widetilde{L}_t^lQ^k\| \|S_{t}^l\| \leqslant \frac{1}{2} \left( C \|\widetilde{L}_t^l\|^2_{{H}^{n+2}_l\times TM\to \mathcal{H}_l\times TM} + \|S_{t}^l\|^2_{\mathcal{H}_l\times TM\to \mathcal{H}_l\times TM} \right). \]
Then by \eqref{Sl}-\eqref{Ll}, we obtain
\[
\mathbb{P}(\widetilde{\Omega}_{\varepsilon}^c) \leq C\exp(\eta V(u_0)) \varepsilon
\]
for any \(\varepsilon < \varepsilon_0(\widetilde{T}_0)\), where \(C = C(\eta, \widetilde{T}_0,\mathbf{p}_0)\) that is locally bounded in \( \mathbf{p}_0 \in M \). Finally, on \(\widetilde{\Omega}_{\varepsilon}\) we have, cf. \eqref{Gamma-2}, that
	\begin{align} \label{step1}
	\langle \mathcal{N}_{_{T_0}}^l\mathfrak{h}^l,\, \mathfrak{h}^l \rangle_l \leq \varepsilon \|\mathfrak{h}^l\|_l^2 \Longrightarrow \sup_{t \in [\tau_0, T_0]} |\langle S_t^l Q^k,\,\mathfrak{h}^l\rangle_l| \leq \varepsilon^{1/8} \|\mathfrak{h}^l\|_l,
	\end{align}
for each \(k \in \mathcal{Z}_0\) and any \(\mathfrak{h}^l \in \mathcal{H}_l\times T_{\mathbf{p}}M\).\\
\textbf{Step 2.} Using the Lemma \ref{SE}, for any $k\in \mathcal{Z}_0$, we obtain that
$$
S_t^l Q^k=Q^k+\int_{\tau_0}^{t}S_s^l \Upsilon_l Q^k\,ds.
$$
Then by \eqref{step1}, on \(\widetilde{\Omega}_{\varepsilon}\) we have that
\begin{align*} 
\langle \mathcal{N}_{_{T_0}}^l\mathfrak{h}^l,\, \mathfrak{h}^l \rangle_l \leq \varepsilon \|\mathfrak{h}^l\|_l^2 \Longrightarrow
\begin{cases} 
\sup\limits_{k\in \mathcal{Z}_0}|\langle Q^k,\,\mathfrak{h}^l\rangle_l|\le \varepsilon^{1/8}\|\mathfrak{h}^l\|_l, \\ 
\sup\limits_{k\in \mathcal{Z}_0}\sup\limits_{t\in [\tau_0,T_0]}|\int_{\tau_0}^t\langle S_s^l\Upsilon_l Q^k,\,\mathfrak{h}^l\rangle_l\, ds|\le \varepsilon^{1/{8}}\|\mathfrak{h}^l\|_l.
\end{cases}
\end{align*}
For fixed \(\mathfrak{h}^l \in \mathcal{H}_l\times T_{\mathbf{p}}M\), let 
\[
g_{_{\mathfrak{h}^l}}(t) := \int_{\tau_0}^t\langle S_s^l\Upsilon_l Q^k,\,\mathfrak{h}^l\rangle_l\, ds.
\] 
Then 
$$
g_{_{\mathfrak{h}^l}}'(t) = \langle S_t^l\Upsilon_l Q^k,\,\mathfrak{h}^l\rangle_l,\;\;g_{_{\mathfrak{h}^l}}''(t) = \partial_t\big(\langle S_t^l\Upsilon_l Q^k,\,\mathfrak{h}^l\rangle_l\big).
$$
Let \(\overline{\Omega}_{\varepsilon} = \overline{\Lambda}_{\varepsilon, 1/4}^c\) (with $\alpha=1/4$), where \(\overline{\Lambda}_{\epsilon,\alpha}\) is as in \eqref{Gamma} over with \(\mathcal{I} := \{\mathfrak{h}^l \in \mathcal{H}_l\times T_{\mathbf{p}}M : \|\mathfrak{h}^l\|_l = 1\}\). Then, on \(\overline{\Omega}_{\varepsilon}\) one has, in view of \eqref{Gamma-3},
\begin{equation}\label{SUQ}
\sup_{t \in [\tau_0,T_0]} |\int_{\tau_0}^t\langle S_s^l\Upsilon_l Q^k,\,\mathfrak{h}^l\rangle_l\, ds| \leq \varepsilon \|\mathfrak{h}^l\|_l \Longrightarrow \sup_{t \in [\tau_0,T_0]} |\langle S_t^l\Upsilon_l Q^k,\,\mathfrak{h}^l\rangle_l| \leq \epsilon^{1/10} \|\mathfrak{h}^l\|_l .
\end{equation}
By \eqref{Gamma-2}, we have that
\begin{align}\label{Gamma-4}
\mathbb{P}((\overline{\Omega}_{\varepsilon})^c) &\leq C \epsilon \mathbb{E} \left( \sup_{\mathfrak{h}^l \in \mathcal{I}} \| g_{_{\mathfrak{h}^l}}'(t) \|_{C^{1/4}([\tau_0,T_0])}^{8} \right).
\end{align}
Set $Y_t^k:= \langle S_t^l\Upsilon_l Q^k,\,\mathfrak{h}^l\rangle_l$, it now suffices to estimate $\mathbb{E} \big( \sup_{\mathfrak{h}^l \in \mathcal{I}} \|Y_t^k\|_{C^{1/4}([\tau_0,T_0])}^{8} \big)$. Noted that
\begin{align} \label{Gamma-5}
Y_t^k = Y_0^k+\int_{\tau_0}^{t}\langle S_s^l\Upsilon_l(\Upsilon_l Q^k),\, \mathfrak{h}^l\rangle_l\, ds + 
\int_{\tau_0}^{t}\langle S_s^lD(\Upsilon_l Q^k)\hat{Q}\, dW_s,\, \mathfrak{h}^l\rangle_l.  
\end{align}
Setting $\alpha^k_s:=\langle S_s^l\Upsilon_l(\Upsilon_l Q^k),\, \mathfrak{h}^l\rangle_l$, and $\gamma_s^k:=\langle S_s^lD(\Upsilon_l Q^k)\hat{Q},\, \mathfrak{h}^l\rangle_l$ as a linear operator from ${\mathbb{R}^{m}}$ to $\mathbb{R}$, we then rewrite \eqref{Gamma-5} as
$$
Y_t^k = Y_0^k+\int_{\tau_0}^{t} \alpha^k_s\, ds + \int_{\tau_0}^{t}\gamma_s^k\, dW_s.
$$

We first estimate \(\alpha_S^k\). From \eqref{Upsilon} we immediately have
$$
\Upsilon_l(\Upsilon_l Q^k) = q_k[\overline{F},\,\Upsilon_l Q^k]-q_k[B(u,u),\,\Upsilon_l Q^k]_l-q_k[\hat{A},\,\Upsilon_l Q^k]_l =: \, q_k[\hat{F},\,\Upsilon_l Q^k]_l-q_k[\hat{A},\,\Upsilon_l Q^k]_l,
$$
where
\[
\hat{F} =
\begin{pmatrix}
-B(u, u) \\
u(x) \\
\Pi_v Du(x)v 
\end{pmatrix},
\quad \Upsilon_l Q^k =
\begin{pmatrix}
-\Pi_l\big(B(u,e_k\gamma_k)+B(e_k\gamma_k,u)\big)-\hat{A}_le_k\gamma_k\\
e_k\gamma_k \\
(k\cdot v) e{-k}(x)(\Pi_v \gamma_k)
\end{pmatrix}.
\]
A direct calculation gives
\[
\Pi_l\big(D(\Upsilon_l Q^k)\hat{F}\big) = 
\begin{pmatrix}
\Pi_l\big(\nabla B(B(u))(e_k\gamma_k)\big)- \Pi_l\Big(\nabla_x\big(\nabla B(u)(e_k\gamma_k)\big)\Big)u-\hat{A}_l\nabla_xe_ku\gamma_k\\
\nabla_xe_k(x)u\gamma_k\\
(k\cdot v) \nabla e_{-k}(x) u (\Pi_v \gamma_k)+(k \Pi_v \nabla_xu(x)v)e_{-k}(x)(\Pi_v \gamma_k)
\end{pmatrix},
\]
and
\[
\begin{aligned}
&\Pi_l\big(D\hat{F}(\Upsilon_l Q^k)\big) \\
&= 
\begin{pmatrix}
\Pi_l\left(\nabla B(u)\big(\nabla V (e_k\gamma_k)\big)\right)-\nabla_xB(u)e_k\gamma_k\\
\nabla u(x)e_k\gamma_k\\
-\Pi_l\left (\nabla(\Pi_v\nabla_xu(x)v)\cdot \big(\nabla V (e_k\gamma_k)\big)\right)+\Pi_v\nabla_x^2u(x)\cdot e_k\gamma_k v + \Pi_v\nabla_xu(x)(k\cdot v)e_{-k}(x)(\Pi_v\gamma_k)
\end{pmatrix},
\end{aligned}
\]
where $B(u):=B(u,u)$, $\nabla B(u)\phi := B(u,\phi)+B(\phi,u)$ and $\nabla V (e_k\gamma_k):= \nabla B(u)(e_k\gamma_k)-\hat{A}_le_k\gamma_k$.

Noted that 
$$
[\hat{F},\,\Upsilon_l Q^k]_l = \Pi_l\big(D(\Upsilon_l Q^k)\hat{F}-D\hat{F}(\Upsilon_l Q^k)\big).
$$
Then, using the fact that $\mathcal{H}_l\times TM$ is finite-dimensional, we obtain
\begin{align*}
\mathbb{E} \sup_{\tau_0\le s < t\le T_0}|\Upsilon_l(\Upsilon_l Q^k)| \le C_1\mathbb{E}\sup_{\tau_0\le t\le T_0}|\|u^l\|_{H^{n+2}_l} \le C_2(N_*)\mathbb{E}\sup_{\tau_0\le t\le T_0}|\|u^l\|_{\mathcal{H}_l} \le C e^{\eta V(u_0)},
\end{align*}
where \(C = C(\eta, N_*,\widetilde{T}_0,\mathbf{p}_0)\) that is locally bounded in \( \mathbf{p}_0 \in M \). Moreover, we have that
\begin{align}\label{2-2}
	\mathbb{E} \big(\|\alpha_s^k\|_{L^{\infty}([\tau_0,T_0];\mathbb{R})}\big)\le C(\eta, N_*,\widetilde{T}_0,\mathbf{p}_0)e^{\eta V(u_0)}<\infty.
\end{align}

We now estimate \( \gamma_s^k \). Note that for any \( f \in {\mathbb{R}^{m}} \), \( \gamma_s^k f = \langle S_s^lD(\Upsilon_lQ^k)\hat{Q}f,\,\mathfrak{h}^l\rangle_l \), its Hilbert-Schmidt norm is given by
$$
\|\gamma_s^k\|_{\mathcal{L}^2({\mathbb{R}^{m}},\mathbb{R})} = \bigg(\sum_{m}\big|\langle S_s^lD(\Upsilon_lQ^k)\hat{Q}g_m,\,\mathfrak{h}^l\rangle_l\big|^2\bigg)^{1/2},
$$
where \( \{g_m\} \) is an orthonormal basis of \( {\mathbb{R}^{m}} \). Furthermore, we have that
\begin{align*}
\mathbb{E} \sup_{\tau_0\le s < t\le T_0} \|\gamma_s^k\|_{\mathcal{L}^2({\mathbb{R}^{m}},\mathbb{R})} &\le \mathbb{E} \sup_{\tau_0\le s < t\le T_0} \|S_s^l\|_{\mathcal{H}_l\times T_{\mathbf{p}_{_{\tau_0}}}M\to \mathcal{H}_l\times T_{\mathbf{p}_{_{s}}}M}\\
&\quad\cdot \mathbb{E} \sup_{\tau_0\le s < t\le T_0} \|D(\Upsilon_l Q^k)\|_{\mathcal{H}\times M\to \mathcal{H}_l\times T_{\mathbf{p}}M}\cdot 
\|\hat{Q}\|_{\mathcal{L}^2({\mathbb{R}^{m}},\mathcal{H}\times M)}\\
&\le C e^{\eta V(u_0)},
\end{align*}
where \(C = C(\eta, N_*,\widetilde{T}_0,\mathbf{p}_0)\) that is locally bounded in \( \mathbf{p}_0 \in M \). 

Finally, we estimate the moments of the H\"{o}lder semi-norm of \( Y_t^k \), our target quantity $\|Y_t^k\|_{C^{1/4}([\tau_0,T_0])}$. For any \( s,t\in [\tau_0,T_0] \), we have
$$
|Y_t^k-Y_s^k|\le |\int_s^t \alpha_{r}^k\, dr|+|\int_s^t \gamma_{r}^k\, dW_r|.
$$
By the Burkholder–Davis–Gundy inequality, for any $p\ge 2$, we have
$$
\mathbb{E} |\int_s^t \gamma_{r}^k\, dW_r|^p\le C(p)\mathbb{E}\Big(\int_s^t \|\gamma_{r}^k\|^2_{\mathcal{L}^2({\mathbb{R}^{m}},\mathbb{R})}\, dr\Big)^{p/2}\le C(p,\eta, N_*,\widetilde{T}_0,\mathbf{p}_0)|t-s|^{p/2}e^{\eta V(u_0)}.
$$
Moreover, we obtain that
$$
\mathbb{E} |Y_t^k-Y_s^k|^p\le C(p,\eta, N_*,\widetilde{T}_0,\mathbf{p}_0) |t-s|^{p/2}e^{\eta V(u_0)}\le C|t-s|^{p/2}.
$$
Then, by the Kolmogorov continuity theorem, we know that
$$
\mathbb{E} \Big(\sup_{\mathfrak{h}^l\in \mathcal{I}}\|Y^k_t\|_{C^{1/4}([\tau_0,T_0])}^8\Big) \le C e^{\eta V(u_0)},
$$
where \(C = C(\eta, N_*,\widetilde{T}_0,\mathbf{p}_0)\) that is locally bounded in \( \mathbf{p}_0 \in M \). 
Combining with \eqref{Gamma-4}, we have
\begin{align*}
\mathbb{P}((\overline{\Omega}_{\varepsilon})^c) &\leq C(\eta, N_*,\widetilde{T}_0,\mathbf{p}_0) \epsilon e^{\eta V(u_0)}.
\end{align*}
Combining \textbf{Step 1}, setting $\mathcal{E}(\widetilde{T}_0):=\min\big\{(\frac{\widetilde{T}_0}{4})^{10/9},(\frac{\widetilde{T}_0}{4})^{4/3}\big\}$. Then for every \( \varepsilon \in (0, \mathcal{E}(\widetilde{T}_0)) \), setting $\Omega^*_{\varepsilon}:=\overline{\Omega}_{\varepsilon} \bigcap \widetilde{\Omega}_{\varepsilon}$, we have on $\Omega^*_{\varepsilon}$ that
\begin{align*}
\langle \mathcal{N}_{_{T_0}}^l \mathfrak{h}^l, \mathfrak{h}^l \rangle_l \leq \varepsilon \|\mathfrak{h}^l|_l^2 \Longrightarrow
\begin{cases} 
\sup\limits_{k\in \mathcal{Z}_0}|\langle Q^k,\,\mathfrak{h}^l\rangle_l|\le \varepsilon^{1/8}\|\mathfrak{h}^l|_l, \\ 
\sup\limits_{k\in \mathcal{Z}_0}\sup\limits_{t\in [\tau_0,T_0]}|\langle S_t^l\Upsilon_l Q^k,\,\mathfrak{h}^l\rangle_l|\le \varepsilon^{1/{80}}\|\mathfrak{h}^l\|_l,
\end{cases}
\end{align*}
and there exist a constant \(C(\eta, N_*,\widetilde{T}_0,\mathbf{p}_0)>0\) (locally bounded in \( \mathbf{p}_0 \in M \)) such that
\[
\mathbb{P}((\Omega_{\varepsilon}^*)^{c}) \leq C(\eta, N_*,\widetilde{T}_0,\mathbf{p}_0) \varepsilon e^{\eta V({u_0})}.
\]
\end{proof}

The proof of Lemma \ref{spectral bound} now follows straightforwardly.
\begin{proof}[Proof of Lemma \ref{spectral bound}]
Taking \( t = \tau_0 \) in Lemma \ref{upper}, then for any \( \varepsilon \in (0,\mathcal{E}_0) \), on the set \( \Omega_{\varepsilon}^* \) we have
$$
\langle \mathcal{N}_{_{T_0}}^l \mathfrak{h}^l, \mathfrak{h}^l \rangle_l \leq \varepsilon \|\mathfrak{h}^l|_l^2 \Longrightarrow
\max\Big\{|\langle Q^k,\,\mathfrak{h}^l\rangle_l|,\,|\langle \Upsilon_l^{\tau_0} Q^k,\,\mathfrak{h}^l\rangle_l|:\, k\in \mathcal{Z}_0\Big\}\le \varepsilon^{1/{80}}\|\mathfrak{h}^l\|_l.
$$
Furthermore, combining with Lemma \ref{lower}, we obtain on \( \Omega_{\varepsilon}^*  \) that
\begin{align*}
\langle \mathcal{N}_{_{T_0}}^l \mathfrak{h}^l, \mathfrak{h}^l \rangle_l \leq \varepsilon  \Longrightarrow \frac{C(N_*,\nu)}{1+\|u^l\|_{\mathcal{H}_l}} \|\mathfrak{h}^l\|_l\le \max\Big\{|\langle Q^k,\,\mathfrak{h}^l\rangle_l|,\,|\langle \Upsilon_l^{\tau_0} Q^k,\,\mathfrak{h}^l\rangle_l|:\, k\in \mathcal{Z}_0\Big\}\le \varepsilon^{1/{80}}.
\end{align*}
Then,
$$
\left\{
\left\langle \mathcal{N}_{T_0}^{l} h^{l}, h^{l} \right\rangle_l
< \varepsilon
\right\}
\subset
\left(\Omega_{\varepsilon,p}^{*}\right)^c
\cup
\left\{
\left\|u_{\tau_0}^{l}\right\|_{\mathcal{H}_l}
\ge c_{*}\varepsilon^{-1/80}-1
\right\}.
$$
Note that, by Markov's inequality,
\begin{equation}
	\begin{aligned}
		\mathbb{P}\left(
		\left\|u_{\tau_0}^{l}\right\|_{\mathcal{H}_l}
		\ge c_{*}\varepsilon^{-1/80}-1
		\right)\le
		C_m \varepsilon^{m/80}
		\mathbb{E}
		\left|u_{\tau_0}^{l}\right|_{\mathcal{H}_l}^{m}\le
		C_{m,\eta,N_{*}}
		\varepsilon^{m/80}
		e^{\eta V(u_0)} .
	\end{aligned}
	\tag{18}
\end{equation}
Take $m\ge 80p$. Then there exist a constant \( C(N_*,\eta, \mathbf{p}_0,p)>0 \), locally uniformly in $\mathbf{p}_0$, such that for any \( \varepsilon\in (0,{\mathcal{E}}_0) \), we have
\[
\sup_{\mathfrak{h}^l \in \mathcal{H}_l \times T_{\mathbf{p}} M, \, \|\mathfrak{h}^l\| = 1} \mathbb{P} \left( \langle \mathcal{N}_{_{T_0}}^l \mathfrak{h}^l,\mathfrak{h}^l\rangle_l< \varepsilon \right) \le C(N_*,\eta, \mathbf{p}_0,p)\varepsilon^p e^{\eta V(u_0)}.
\]
\end{proof}

\appendix
\section{Basic estimates}\label{priori}
This section first collects several basic tools used throughout the paper: moment estimates for the solution, the super-Lyapunov function and its key properties. We then present Jacobian derivative estimates that will be repeatedly invoked in Sections \ref{SEC3} and \ref{SEC4}.

The moment estimates for the baseline process \(u_t\) in this section are obtained by classical arguments; the corresponding results and full proofs can be found in \cite[Appendix A.1]{NS}. To avoid repetition, we only state the results needed here.
\begin{proposition}\label{ME}
Suppose that \(u_{t}\) is a solution to \eqref{2NS}. Then there exists \(C>0\) such that for all \(\eta\leqslant C^{-1}\) and for all \(0\leqslant s\leqslant t\),
\begin{align} \label{NS-2.4}
\mathbb{E}\exp\Bigl(\eta\sup_{s\leqslant r\leqslant t}\Bigl(\|\nabla u_{r}\|^{2}_{L^{2}}+\nu\int_{s}^{r}\|u_{a}\|^{2}_{H^{2}}\,\mathrm{d}a\Bigr)\Bigr) \leqslant Ce^{C(t-s)}\exp\bigl(e^{-C^{-1}s}\eta\|\nabla u_{0}\|^{2}_{L^{2}}\bigr). 
\end{align}
Additionally, there exists \(C(n)>0\) such that for all \(0\leqslant\eta\leqslant C^{-1}\) and \(0\leqslant s\leqslant t\),
\begin{align} \label{NS-2.5}
\mathbb{E}\exp\Bigl(\eta\sup_{s\leqslant r\leqslant t}\Bigl(\|u_{r}\|^{2}_{H^{n+1}}+\nu\int_{s}^{r}\|u_{a}\|^{2}_{H^{n+2}}\,\mathrm{d}a\Bigr)^{\frac{1}{n+2}}\Bigr)\leqslant Ce^{Ct}\exp\Bigl(e^{-C^{-1}s}\eta\| u_{0}\|^{\frac{2}{n+2}}_{H^{n+1}}+C\eta\|\nabla u_{0}\|^{2}_{L^{2}}\Bigr), 
\end{align}
and
\begin{align} \label{NS-2.6}
\mathbb{E}\exp\Bigl(\eta\sup_{s\leqslant r\leqslant t}\Bigl(\|u_{r}\|^{2}_{H^{n+1}}+\nu\int_{s}^{r}\|u_{a}\|^{2}_{H^{n+2}}\,\mathrm{d}a\Bigr)^{\frac{1}{n+2}}\Bigr)\leqslant Ce^{Cs^{-1}}e^{Ct}\exp\Bigl(C\eta\|\nabla u_{0}\|^{2}_{L^{2}}\Bigr). 
\end{align}
\end{proposition}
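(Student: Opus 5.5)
The plan is to derive the three moment bounds by the classical route: an Itô energy estimate combined with an exponential martingale inequality. Higher regularity is then handled by an interpolation argument that explains the fractional exponent $1/(n+2)$.

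\textbf{Step 1: the enstrophy bound \eqref{NS-2.4}.} I apply Itô's formula to $\|\nabla u_r\|_{L^2}^2$, equivalently to $\|\omega_r\|_{L^2}^2$ with $\omega=\nabla^\perp\cdot u$. On $\mathbb{T}^2$ the nonlinear term drops out because $\langle u\cdot\nabla\omega,\omega\rangle=0$. This gives
\[
d\|\omega\|^2+2\nu\|\nabla\omega\|^2dr=\mathcal{E}_0'\,dr+dM_r ,
\]
where $\mathcal{E}_0'$ is a constant of the size of $\mathcal{E}_0$ (the Itô correction; the forcing modes are finitely many, smooth and mean-zero). The martingale $M_r$ has quadratic variation bounded by $C\|\omega\|^2dr\le C\|\nabla\omega\|^2dr$, using Poincaré's inequality.

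The exponential martingale inequality $\mathbb{P}(\sup_r(M_r-\tfrac{\eta}{2}\langle M\rangle_r)>K)\le e^{-\eta K}$ then absorbs $\tfrac{\eta}{2}\langle M\rangle$ into $\nu\int\|\nabla\omega\|^2$ once $\eta\le C^{-1}$. This yields the exponential moment with growth $e^{C(t-s)}$. To get the factor $e^{-C^{-1}s}$ multiplying $\|\nabla u_0\|^2$, I first use the dissipation on $[0,s]$: applying Itô to $e^{\kappa r}\|\omega_r\|^2$ with $\kappa\sim\nu$ gives $\mathbb{E}\exp(\eta\|\omega_s\|^2)\le C\exp(\eta e^{-\kappa s}\|\omega_0\|^2)$. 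I then restart the argument at time $s$ using the Markov property.

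\textbf{Step 2: higher Sobolev norms, \eqref{NS-2.5}.} I apply Itô to $\|u\|_{H^{n+1}}^2$. The nonlinear term satisfies the standard commutator bound
\[
|\langle \Lambda^{n+1}(u\cdot\nabla u),\Lambda^{n+1}u\rangle|\le C\|\nabla u\|_{L^\infty}\|u\|_{H^{n+1}}^2 .
\]
Using Gagliardo--Nirenberg/Agmon interpolation between $H^1$ and $H^{n+2}$, this is at most $\tfrac{\nu}{2}\|u\|^2_{H^{n+2}}+C\|\nabla u\|_{L^2}^{q}\|u\|_{H^{n+1}}^2\cdot(\ldots)$. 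The point is that the growth is polynomial in $\|u\|_{H^{n+1}}$, so a Grönwall argument produces $\|u\|^2_{H^{n+1}}$ controlled by $\|u_0\|^2_{H^{n+1}}$ times a factor involving $\exp(C\int\|u\|^2_{H^2})$. That factor is already handled by Step 1.

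The power $1/(n+2)$ is what makes the exponential moment finite. I take the $1/(n+2)$ root of the pathwise bound, so that the right-hand side becomes at most linear in the enstrophy integral appearing in \eqref{NS-2.4}. Combining with Step 1 by Hölder's inequality gives \eqref{NS-2.5}, with the term $C\eta\|\nabla u_0\|^2$ coming from \eqref{NS-2.4}.

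\textbf{Step 3: the smoothing estimate \eqref{NS-2.6}.} I want to remove the dependence on $\|u_0\|_{H^{n+1}}$ at the price of $e^{Cs^{-1}}$. I do this by parabolic smoothing: apply the Itô argument to $r^{m}\|u_r\|_{H^{n+1}}^2$ with weights vanishing at $0$, and induct on $n$, using $\int_0^s\|u\|_{H^{j+1}}^2$ from the previous level. At $r=s$ the weight $s^{-m}$ appears. After taking the $1/(n+2)$ power and exponentiating, Young's inequality converts it into a factor $e^{Cs^{-1}}$.

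\textbf{Main obstacle.} The hardest step is Step 2/3: closing the exponential moment for $H^{n+1}$ with the precise exponent $1/(n+2)$. This requires a careful choice of interpolation so that the Grönwall exponent is integrable against the Gaussian-type tail from Step 1. I would follow the bookkeeping of \cite[Appendix A.1]{NS} rather than reinvent it.
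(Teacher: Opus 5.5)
Your Step 1 is the standard enstrophy argument and is fine. Your overall architecture (It\^o, the exponential martingale inequality, then a $1/(n+2)$ root) is also that of \cite[Appendix A.1]{NS}, to which the paper defers without giving a proof.

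The gap is in Step 2. Suppose you keep a factor $\|u\|^2_{H^{n+1}}$ on the right-hand side and close with Gr\"onwall. You get $\|u_r\|^2_{H^{n+1}}\lesssim\|u_s\|^2_{H^{n+1}}\exp\bigl(C\int_s^r\|u_a\|^2_{H^2}\,da\bigr)+\dots$. Taking the $1/(n+2)$ root only replaces $C$ by $C/(n+2)$ in that exponent; it does not make anything linear in the enstrophy. You would then need $\mathbb{E}\exp\bigl(\eta\|u_s\|^{2/(n+2)}_{H^{n+1}}e^{cZ}\bigr)$ with $Z=\int_s^t\|u_a\|^2_{H^2}\,da$, which is a double exponential. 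But \eqref{NS-2.4} only gives $\mathbb{E}e^{\eta Z}<\infty$ for small $\eta$, i.e.\ at most polynomial moments of $e^{cZ}$.

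Structurally, a bound of the form $\|u_s\|^{2/(n+2)}_{H^{n+1}}\times(\text{random factor}>1)$ cannot produce the right side of \eqref{NS-2.5]}. There $\|u_0\|^{2/(n+2)}_{H^{n+1}}$ enters the exponent linearly, with the deterministic coefficient $e^{-C^{-1}s}\eta$. So the obstacle you flag is not a matter of choosing the interpolation so that a Gr\"onwall exponent becomes integrable: Gr\"onwall has to be avoided altogether. Your Step 3, as described, inherits the same defect.

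The missing idea is that the nonlinearity must be absorbed additively, leaving no power of the $H^{n+1}$ norm. Your own commutator bound suffices. Interpolate
$\|\nabla u\|_{L^\infty}\le C\|\nabla u\|_{L^2}^{n/(n+1)}\|u\|_{H^{n+2}}^{1/(n+1)}$ and $\|u\|^2_{H^{n+1}}\le\|u\|_{H^1}^{2/(n+1)}\|u\|_{H^{n+2}}^{2n/(n+1)}$. This gives $C\|\nabla u\|_{L^\infty}\|u\|^2_{H^{n+1}}\le\frac{\nu}{2}\|u\|^2_{H^{n+2}}+C\|\nabla u\|_{L^2}^{2(n+2)}$, and hence
\[
\|u_r\|^2_{H^{n+1}}+\frac{\nu}{2}\int_s^r\|u_a\|^2_{H^{n+2}}\,da\le\|u_s\|^2_{H^{n+1}}+C(r-s)\Bigl(1+\sup_{s\le a\le r}\|\nabla u_a\|^{2(n+2)}_{L^2}\Bigr)+M_r .
\]
Now the $1/(n+2)$ root turns the middle term into $C(r-s)^{1/(n+2)}\bigl(1+\sup\|\nabla u\|^2_{L^2}\bigr)$. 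This is exactly what \eqref{NS-2.4} controls, and it is the real origin of the exponent $1/(n+2)$.

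You still have to deal with the martingale, the decay factor, and the smoothing estimate:
\begin{itemize}
\item \emph{Martingale.} Since $d\langle M\rangle\le C\|u\|^2_{H^{n+1}}dr\le C\|u\|^2_{H^{n+2}}dr$, the exponential martingale inequality absorbs it into the dissipation. The $1/(n+2)$ root of the leftover level then has super-exponential tails.
\item \emph{Factor $e^{-C^{-1}s}$.} Run the same additive inequality on $[0,s]$ with weight $e^{\kappa r}$, using $\|u\|_{H^{n+2}}\ge\|u\|_{H^{n+1}}$, before taking the root.
\item \emph{Estimate \eqref{NS-2.6}.} The weight $r^{n+1}$ works directly, with no induction needed. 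By the same interpolation, $(n+1)r^n\|u\|^2_{H^{n+1}}\le\frac{\nu}{4}r^{n+1}\|u\|^2_{H^{n+2}}+C\|\nabla u\|^2_{L^2}$. Then Young's inequality, $\eta s^{-(n+1)/(n+2)}Y^{1/(n+2)}\le\eta Y+\eta s^{-1}$, produces the factor $e^{Cs^{-1}}$.
\end{itemize}
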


\begin{definition}[Super-Lyapunov function for the base process] \label{superL}
For \(u\in H_0^{5}(\mathbb{T}^{2},\mathbb{R}^2)\), we define
\begin{align}\label{SL1}
V(u):=\sigma\bigl(\|u\|^{2}_{H_0^{1}}+\alpha\|u\|^{1/3}_{H_0^{5}}\bigr),
\end{align}
where \(\alpha,\sigma>0\) are some fixed small constants, chosen according to Proposition \ref{ME} so that Corollary \ref{SLP} holds for all \(\eta\in(0,2)\).
\end{definition}

\begin{corollary}[Super-Lyapunov property] \label{SLP}
There exist \(\delta\in(0,1)\) and a constant \(C>0\) such that for all \(\eta\in(0,2)\) and \(t\geqslant 1\),
\[
\mathbb{E}\exp(\eta V(u_{t}))\leqslant C\exp(\delta\eta V(u_{0})).
\]
\end{corollary}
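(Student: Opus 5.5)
The plan is to derive Corollary~\ref{SLP} from the exponential moment bounds of Proposition~\ref{ME}, treating the two pieces of $V$ separately and recombining them with Hölder's inequality. Write $V(u)=\sigma V_1(u)+\sigma\alpha V_2(u)$ with $V_1=\|u\|_{H^1_0}^2\simeq\|\nabla u\|_{L^2}^2$ (Poincaré, mean zero) and $V_2=\|u\|_{H^5}^{1/3}$. For $t\ge1$ and $\eta\in(0,2)$, Hölder with exponents $2,2$ gives
\[
\mathbb{E}e^{\eta V(u_t)}\le\Big(\mathbb{E}e^{2\eta\sigma\|\nabla u_t\|^2}\Big)^{1/2}\Big(\mathbb{E}e^{2\eta\sigma\alpha\|u_t\|_{H^5}^{1/3}}\Big)^{1/2}.
\]
We choose $\sigma$ so small that $4\sigma\le C^{-1}$. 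Then $2\eta\sigma$ is admissible in Proposition~\ref{ME} for every $\eta<2$.

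For the first factor, apply \eqref{NS-2.4} with $s=t$ (the supremum collapses to $r=t$). This yields $Ce^{C\cdot0}\exp(e^{-C^{-1}t}2\eta\sigma\|\nabla u_0\|^2)$. Since $t\ge1$, we have $e^{-C^{-1}t}\le e^{-C^{-1}}=:\delta_1<1$, so the first factor is at most $C^{1/2}\exp(\delta_1\eta\sigma V_1(u_0))$.

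The second factor is the delicate one, because the exponent must contract in $V_2$ while also not producing an uncontrolled multiple of $V_1$. Taking $n=4$ in \eqref{NS-2.5}, the power $(\|u\|_{H^5}^2)^{1/6}=\|u\|_{H^5}^{1/3}$ matches $V_2$ exactly. At $s=t$ this gives a bound
\[
Ce^{Ct}\exp\big(e^{-C^{-1}t}2\eta\sigma\alpha\|u_0\|^{1/3}_{H^5}+C\,2\eta\sigma\alpha\|\nabla u_0\|^2\big).
\]
Two defects appear here. First, the prefactor $e^{Ct}$ grows in $t$. Second, there is a $V_1$-contribution $C\alpha\eta\sigma V_1$. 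To handle the growth in $t$, I would not apply \eqref{NS-2.5} over all of $[0,t]$. Instead I would use the Markov property at time $t-1$ together with the smoothing estimate \eqref{NS-2.6} on a unit window. This bounds $\mathbb{E}[e^{2\eta\sigma\alpha V_2(u_t)}\mid\mathcal F_{t-1}]\le C'\exp(C\eta\sigma\alpha\|\nabla u_{t-1}\|^2)$ uniformly, with no dependence on $V_2(u_{t-1})$. Then \eqref{NS-2.4} from $0$ to $t-1$ (with contraction factor $e^{-C^{-1}(t-1)}\le1$) bounds the remaining expectation by $C\exp(C\alpha\eta\sigma V_1(u_0))$.

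If $t-1<1$, I would use \eqref{NS-2.5} directly on $[0,t]\subset[0,2]$, where $e^{Ct}\le e^{2C}$, and contraction of the $V_2(u_0)$-term comes from $e^{-C^{-1}t}\le e^{-C^{-1}}$. In both cases the $V_1$-leakage has size $C\alpha\eta\sigma V_1(u_0)$. Choosing $\alpha$ small, so that $\delta_1+C\alpha\le\delta<1$ after halving by the square root, absorbs it into the contracted first factor. The $V_2$ exponent is then at most $\delta\eta\sigma\alpha V_2(u_0)$, or even $0$ in the smoothing case.

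Multiplying the two factors gives $\mathbb{E}e^{\eta V(u_t)}\le C\exp(\delta\eta V(u_0))$, with $\delta=\max(\delta_1,e^{-C^{-1}})+O(\alpha)<1$. I expect the main obstacle to be the second factor: making the $t$-uniformity (the $e^{Ct}$ prefactor) compatible with the contraction, and checking that the conditioning and restart argument fits within the admissibility range $\eta\le C^{-1}$ of Proposition~\ref{ME}. That range is what dictates the ordering of choices: first fix $\sigma$, then $\alpha$.
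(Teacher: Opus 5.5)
Your argument is correct and follows the route the paper intends; the paper gives no proof beyond choosing $\sigma$ and then $\alpha$ small ``according to Proposition~\ref{ME}'' and deferring to \cite{NS}. Your version fills in the details: Cauchy--Schwarz to split $V$, \eqref{NS-2.4} at $s=t$ for the contracting gradient part, and a Markov restart at time $t-1$ with \eqref{NS-2.6} to remove the $e^{Ct}$ growth while leaking only an $O(\alpha)$ multiple of $\|\nabla u_0\|_{L^2}^2$. One convention needs pinning down: you must take $\|u\|_{H^1_0}=\|\nabla u\|_{L^2}$ exactly, since a merely equivalent norm would multiply the contraction factor $e^{-C^{-1}}$ (only slightly below $1$) by a constant and could destroy $\delta<1$.
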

Now, we provide several Jacobian estimates needed in this paper.
\begin{lemma}
For any $q>0$, $\eta\in(0,1)$ there is $C(\mathbf{p}_0,\eta, q,N_*)>0$, locally bounded in $\mathbf{p}_0\in M$, such that
\begin{align}
&\mathbb{E} \sup\limits_{0\le s\le t}\|R_{s,t}^h\|^2_{\mathcal{H}_h\to \mathcal{H}_h} \le C e^{(C-\nu N_*^2)(t-s)}\exp(\eta V(u_0)),\label{Rh}\\
&\mathbb{E} \sup\limits_{0\le s\le t}\|R_{s,t}^l\|^q_{\mathcal{H}_l\times T_{\mathbf{p}_{_s}}M\to \mathcal{H}_l\times T_{\mathbf{p}_{_t}}M}\le C e^{C(t-s)}\exp(\eta V(u_0)),\label{Rl}\\
&\mathbb{E}\sup\limits_{0\le s\le t}\|S_{s,t}^l\|^q_{\mathcal{H}_l\times T_{\mathbf{p}_{_s}}M\to \mathcal{H}_l\times T_{\mathbf{p}_{_t}}M} \le C e^{C(t-s)}\exp(\eta V(u_0)),\label{Sl}\\
&\mathbb{E} \sup\limits_{\tau_0\le t\le T_0}\|\widetilde{L}_{t}^l\|^q_{{H}_l^{n+2}\times T_{\mathbf{p}_{_t}}M\to \mathcal{H}_l\times T_{\mathbf{p}_{_t}}M}\le C e^{C\widetilde{T}_0}\exp(\eta V(u_0)).\label{Ll}
\end{align}
\end{lemma}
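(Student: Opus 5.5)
Each of the four bounds \eqref{Rh}--\eqref{Ll} will be obtained pathwise by a Gr\"onwall argument on the relevant linear evolution equation in \eqref{PL}. This gives a deterministic bound of the form $\exp\big(C\int_s^t \Phi(u_r)\,dr\big)$ or $\sup_r \Phi(u_r)$, with $\Phi$ a Sobolev norm of $u$. We then take expectations using the exponential moment bounds \eqref{NS-2.4}--\eqref{NS-2.6} of Proposition \ref{ME}, together with Corollary \ref{SLP} to absorb everything into $\exp(\eta V(u_0))$ with $\eta$ arbitrary in $(0,1)$. Local boundedness in $\mathbf{p}_0$ holds trivially because $M$ is compact and all fiber norms are uniformly comparable. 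Since the statement is taken with $q$ arbitrary, we will work with $q\eta$ in place of $\eta$, which is admissible after shrinking $\sigma,\alpha$ in Definition \ref{superL}.

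\textbf{Low modes, \eqref{Rl} and \eqref{Sl}.} On the finite-dimensional space $\mathcal{H}_l\times T_{\mathbf{p}}M$, all norms are equivalent with constants depending on $N_*$. From the explicit form of $\tilde L^l_t$ we get $\|\tilde L^l_t\|\le C(N_*)\big(1+\|u_t\|_{C^3}\big)\le C(N_*)\big(1+\|u_t\|_{H^5}\big)$; the entries are $\nu\Pi_l\Delta$ (bounded by $\nu N_*^2$), the transport terms, $Du(x)$, $D^2u(x)$, and so on. Differentiating $\|R^l_{s,t}\mathfrak h\|^2$ and applying Gr\"onwall gives $\|R^l_{s,t}\|\le \exp\big(C\int_s^t(1+\|u_r\|_{H^5})\,dr\big)$. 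For $S^l_{s,t}$ we use $\partial_t S^l_{s,t}=-S^l_{s,t}\tilde L^l_t$, which gives the same bound. The supremum over $0\le s\le t$ is bounded by the full integral $\int_0^t$. We then bound $q\int_0^t\|u_r\|_{H^5}\,dr \le \kappa\int_0^t\|u_r\|_{H^6}^{2/7}\,dr + C_{\kappa,q}\,t$ via Young's inequality, or more simply by $t\,\sup_r\|u_r\|_{H^5}$. Using $\|u\|_{H^5}\le \delta\|u\|_{H^5}^{2/6}\cdot\ldots$ is too crude, so we use instead $x\le \kappa x^{1/3}\cdot(\ldots)$ absorbed in the form $q x\le \eta' x^{1/3}+C$ only when $x$ is small. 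Since this is not generally true, the cleanest route is the standard one of \cite[Appendix A]{JEMS}: estimate the Jacobian in the energy norm, where the growth rate is controlled by $\|\nabla u\|_{L^\infty}$, interpolate $\|\nabla u\|_{L^\infty}\le \kappa\|u\|_{H^2}^2+C_\kappa$, and invoke \eqref{NS-2.4}. In the finite-dimensional low-mode block, the coupling to $\mathbf{x},v$ involves $D^2u(x)$, and we handle it by $\|D^2u\|_{L^\infty}\le\|u\|_{H^{n+2}}^{2/(n+2)}\cdot C_\kappa+\kappa$-type interpolation combined with \eqref{NS-2.5}. This yields $e^{C(t-s)}\exp(\eta V(u_0))$.

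\textbf{High modes, \eqref{Rh}.} In the energy identity for $\|R^h_{s,t}\mathfrak h\|^2$, the Laplacian gives $-2\nu\|\nabla \mathfrak h\|^2\le -2\nu N_*^2\|\mathfrak h\|^2$ on $\mathcal{H}_h$. One half of this dissipation is used to absorb the nonlinear terms $\langle \mathfrak h\cdot\nabla u+u\cdot\nabla\mathfrak h,\mathfrak h\rangle$ after interpolation and Young's inequality, which produces a rate $C(1+\|u\|_{H^{k}}^{\theta})$ with $\theta$ small. The remaining half gives the factor $e^{-\nu N_*^2(t-s)}$. \emph{This is the step I expect to be the main obstacle}: the $\mathcal{H}=H^5$ norm requires controlling commutator terms of the form $\|u\|_{H^5}\|\mathfrak h\|_{H^5}$ without losing derivatives, and keeping the exponent of $\|u\|$ sublinear enough to be absorbed by $V$. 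I would first try the Kato--Ponce commutator estimate, putting the top-order term into the dissipation via $\|\mathfrak h\|_{H^5}\|\mathfrak h\|_{H^6}\le \epsilon\|\mathfrak h\|_{H^6}^2+C_\epsilon\|\mathfrak h\|_{H^5}^2$ and interpolating the $u$-factor to match $V$'s growth $\|u\|_{H^5}^{1/3}$ via \eqref{NS-2.5}.

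\textbf{\eqref{Ll}.} Finally, \eqref{Ll} is immediate: $\|\tilde L^l_t\|\le C(N_*)(1+\|u_t\|_{H^{n+2}})$, so its $q$-th moment of the supremum over $[\tau_0,T_0]$ is bounded using \eqref{NS-2.6}. Here $s=\tau_0\ge1/2$, so the $e^{Cs^{-1}}$ factor is harmless, and we convert polynomial moments into exponential ones via $x^q\le C_{q,\eta}e^{\eta x^{2/(n+2)}}$.
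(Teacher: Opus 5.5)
Your overall scheme (pathwise Gr\"onwall on \eqref{PL}, then the moment bounds of Proposition~\ref{ME}) is the same as the paper's. Your treatment of \eqref{Ll} through \eqref{NS-2.6} with $s=\tau_0\ge 1/2$ is fine, and you have the correct inverse equation $\partial_t S^l_{s,t}=-S^l_{s,t}\tilde L^l_t$.

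\textbf{The gap in \eqref{Rl}--\eqref{Sl} is $D^2u$.} Your inequality $\|D^2u\|_{L^\infty}\le C_\kappa\|u\|_{H^{n+2}}^{2/(n+2)}+\kappa$ is false: the left side is homogeneous of degree one in $u$, the right side is sublinear. More fundamentally, \eqref{NS-2.4}--\eqref{NS-2.5} do not give $\mathbb{E}\exp\bigl(q\int_s^t\|D^2u_r\|_{L^\infty}dr\bigr)<\infty$. Interpolating $\|u\|_{H^{3+\delta}}\le\|u\|_{H^2}^{1-\theta}\|u\|_{H^m}^{\theta}$, Young's inequality against $\|u\|_{H^2}^2$ and $\|u\|_{H^m}^{2/m}$ needs $\theta<1/(m-1)$, whereas $\theta=(1+\delta)/(m-2)$. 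So $D^2u$ must stay out of the Gr\"onwall exponent.

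The missing idea is the triangular structure of $\tilde L^l_t$:
\begin{itemize}
\item The $\mathfrak h^l_u$-block is closed, and on $\mathcal H_l$ the transport term drops out of the $L^2$ energy. By norm equivalence, this block is bounded by $C(N_*)\exp\bigl(\int_s^t\|\nabla u_r\|_{L^\infty}dr\bigr)$.
\item The $\mathfrak h_x$-equation has diagonal part $Du(x)$ and source $\mathfrak h^l_u(x)$.
\item $D^2u(x)$ enters only through the source $\Pi_v[D^2u(x)\mathfrak h_x]v$ of the $\mathfrak h_v$-equation, whose diagonal part involves only $Du(x)$.
\end{itemize}
Solving block by block with Duhamel gives
\[
\|R^l_{s,t}\|\le C(N_*,\nu)\Bigl(1+(t-s)\sup_{s\le r\le t}\|D^2u_r\|_{L^\infty}\Bigr)\exp\Bigl(C\int_s^t\bigl(1+\|\nabla u_r\|_{L^\infty}\bigr)dr\Bigr).
\]
The same holds for $S^l_{s,t}$ via its adjoint, solving the blocks in reverse order; $\nu\Pi_l\Delta$ then has the wrong sign but is bounded by $\nu N_*^2$. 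Now $D^2u$ is needed only polynomially, which \eqref{NS-2.5} supplies via $x^q\le C_{q,\eta}e^{\eta x^{2/(n+2)}}$.

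For the exponential factor, your bound $\|\nabla u\|_{L^\infty}\le\kappa\|u\|_{H^2}^2+C_\kappa$ is also false in two dimensions (borderline embedding). Use instead $\|u\|_{H^{2+\delta}}\le\|u\|_{H^2}^{1-\theta}\|u\|_{H^{n+2}}^{\theta}$ with $\theta=\delta/n<1/(n+1)$, followed by Young, so that both \eqref{NS-2.4} and \eqref{NS-2.5} enter.

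\textbf{The gap in \eqref{Rh}.} You stop at what you call the main obstacle, and the proposed escape cannot work. No inequality $\frac{d}{dt}\|h\|_{H^5}^2\le\bigl(-\nu N_*^2+C(1+\|u\|_{H^k}^{\theta})\bigr)\|h\|_{H^5}^2$ with $\theta<1$ holds. The quadratic form from $h\cdot\nabla u+u\cdot\nabla h$ is linear in $u$ and not sign-definite, while for fixed $h$ the dissipation is a fixed number, so the rate must grow at least linearly in $u$. In addition, the top-order terms (all derivatives on $u$), estimated against $\|h\|_{H^5}^2$, give powers of $\|u\|_{H^5}$ that are not exponentially integrable.

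The paper's ingredient is \cite[Lemma A.8]{NS}:
\[
\tfrac{d}{dt}\|h\|_{H^n}^2\le-\nu\|h\|_{H^{n+1}}^2+C\|w\|_{H^n}^{2n+2}\|h\|_{L^2}^2,
\]
together with $\|h\|_{H^{n+1}}^2\ge N_*^2\|h\|_{H^n}^2$ on $\mathcal H_h$. The point is that the large power of $w$ multiplies $\|h\|_{L^2}^2$, so keep it as a Duhamel source. Control $\|h\|_{L^2}$ by its own energy identity, where transport cancels on $\mathcal H_h$:
\[
\|h_r\|_{L^2}^2\le e^{-2\nu N_*^2(r-s)+2\int_s^r\|\nabla u\|_{L^\infty}}\|h_s\|_{L^2}^2 .
\]
This yields
\[
\|R^h_{s,t}\|^2\le e^{-\nu N_*^2(t-s)}\Bigl(1+C(t-s)\sup_{s\le r\le t}\|w_r\|_{H^n}^{2n+2}\,e^{2\int_s^t\|\nabla u_r\|_{L^\infty}dr}\Bigr),
\]
whose expectation is handled as above using Cauchy--Schwarz.

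Do not copy the paper's final step literally. It bounds $\|h\|_{L^2}\le\|h\|_{H^n}$ and applies Gr\"onwall, which puts $\|w\|_{H^n}^{2n+2}$ into the exponent, and \eqref{NS-2.5} does not control that.

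Finally, shrinking $\sigma,\alpha$ to trade $\eta$ for $q\eta$ is not permitted, because $V$ is fixed while $q$ is arbitrary. It is also unnecessary: Young's inequality with small $\kappa$ moves all $q$-dependence into $C$ and the factor $e^{C(t-s)}$.
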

\begin{proof}
By making slight modifications and refinements to the proof of \cite[(5.2),(5.5)]{NS}, we obtain the validity of \eqref{Rl} and \eqref{Ll}. Furthermore, note that \( S_{s,t}^l \) satisfies
\begin{align*} 
\partial_t S_{s,t}^l = \hat{A}_l S_{s,t}^l - D_l \hat{F}_l (u_t,\mathbf{p}_t) S_{s,t}^l, \quad S_{s,s}^l = \text{Id}.
\end{align*}
Hence, \eqref{Sl} follows in a similar manner. It now remains to prove \eqref{Rh}. First, observe that for any $\mathfrak{h}^h\in \mathcal{H}_h$, \( R_{s,t}^h\mathfrak{h}^h \) satisfies
\begin{align*} 
\partial_t R_{s,t}^h\mathfrak{h}^h = \nu \Pi_h(\Delta R_{s,t}^h\mathfrak{h}^h)-\Pi_h(R_{s,t}^h\mathfrak{h}^h\cdot \nabla u_t+u_t\cdot \nabla(R_{s,t}^h\mathfrak{h}^h)), \quad R_{s,s}^h\mathfrak{h}^h = \mathfrak{h}^h.
\end{align*}
Then, 
\begin{align*}
\partial_t \|R_{s,t}^h\mathfrak{h}^h\|_{\mathcal{H}_h}^2 &= 2\langle R_{s,t}^h\mathfrak{h}^h,\, \partial_t R_{s,t}^h\mathfrak{h}^h\rangle_h \\
&= -2\nu \|\nabla(R_{s,t}^h\mathfrak{h}^h)\|_{\mathcal{H}_h}^2 - 2\big\langle R_{s,t}^h\mathfrak{h}^h,\, \Pi_h\big(R_{s,t}^h\mathfrak{h}^h\cdot \nabla u_t+u_t\cdot \nabla R_{s,t}^h\mathfrak{h}^h\big)\big\rangle_h.
\end{align*}
Noted that 
\begin{align*}
&2\big\langle R_{s,t}^h\mathfrak{h}^h,\, \Pi_h\big(u_t\cdot \nabla R_{s,t}^h\mathfrak{h}^h+ R_{s,t}^h\mathfrak{h}^h\cdot \nabla u_t\big)\big\rangle_h \\
&= 2\Pi_h \bigg\langle \nabla^nR_{s,t}^h\mathfrak{h}^h,\, \sum_{j=0}^n \binom{n}{j} \left( \nabla^{n-j} \nabla^\perp \Delta^{-1} w_t \cdot \nabla \nabla^j (R_{s,t}^h\mathfrak{h}^h) + \nabla^\perp \Delta^{-1} \nabla^j (R_{s,t}^h\mathfrak{h}^h) \cdot \nabla \nabla^{n-j} w_t \right)\bigg\rangle_{L^2}.
\end{align*}
Using \cite[Lemma A.8]{NS}, a direct calculation gives
\begin{align*}
&\Pi_h\left| \int \nabla^n (R_{s,t}^h\mathfrak{h}^h) : \sum_{j=0}^n \binom{n}{j} \left( \nabla^{n-j} \nabla^\perp \Delta^{-1} w_t \cdot \nabla \nabla^j (R_{s,t}^h\mathfrak{h}^h) + \nabla^\perp \Delta^{-1} \nabla^j (R_{s,t}^h\mathfrak{h}^h) \cdot \nabla \nabla^{n-j} w_t \right) \mathrm{d}x \right|\\
&\leq \frac{\nu}{2} \| R_{s,t}^h\mathfrak{h}^h\|_{H^{n+1}_h}^2 + C \| w_t \|_{H^n}^{2n+2} \| R_{s,t}^h\mathfrak{h}^h \|_{L^2_h}^2\leq \frac{\nu}{2} \| R_{s,t}^h\mathfrak{h}^h\|_{H^{n+1}_h}^2 + C \| w_t \|_{H^n}^{2n+2} \| R_{s,t}^h\mathfrak{h}^h \|_{H^n_h}^2.
\end{align*}
Thus, we have that
\begin{align*}
\partial_t \|R_{s,t}^h\mathfrak{h}^h\|_{H^n}^2 \le -\nu \|R_{s,t}^h\mathfrak{h}^h\|_{H^{n+1}}^2+C\| w_t \|_{H^n}^{2n+2} \| R_{s,t}^h\mathfrak{h}^h \|_{H^n}^2.
\end{align*}
Owing to the fact that $R_{s,t}^h\mathfrak{h}^h$ comprises exclusively high modes and contains all unstable direction, the following holds:
$$
\|R_{s,t}^h\mathfrak{h}^h\|_{H^{n+1}}^2 \ge N_*^2\|R_{s,t}^h\mathfrak{h}^h\|_{H^{n}}^2.
$$
Then, 
$$
\partial_t \|R_{s,t}^h\mathfrak{h}^h\|_{H^n}^2 \le -\big(\nu N_*^2-C\| w_t \|_{H^n}^{2n+2}\big) \|R_{s,t}^h\mathfrak{h}^h\|_{H^{n}}^2.
$$
Together, Gr\"{o}nwall's inequality and \eqref{NS-2.5} yield inequality \eqref{Rh}.
\end{proof}

In addition to the estimates for the Jacobian derivatives of both low and high frequencies, we also need some estimates for the full linear operator.
\begin{lemma}
For any $q>0$, $\eta\in(0,1)$ there are $C(\mathbf{p}_0,\eta, q,N_*)>0$, locally bounded in $\mathbf{p}_0\in M$, and $\beta:=\beta(N_*)>0$ such that
\begin{align}
&\mathbb{E} \sup\limits_{0\le s\le t}\|J_{s,t}\|^q_{\mathcal{H}\times T_{\mathbf{p}_{_s}}M\to \mathcal{H}\times T_{\mathbf{p}_{_t}}M}\le C e^{C(t-s)}\exp(\eta V(u_0)),\label{J}\\
&\mathbb{E} \|\Pi_h(J_{0,\tau_0}\mathfrak{h})\|^q_{\mathcal{H}\times T_{\mathbf{p}_{_{\tau_0}}}M}\le C e^{C\tau_0}N_*^{-q}\exp(\eta V(u_0)).\label{Jh}
\end{align}
\end{lemma}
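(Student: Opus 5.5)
We treat \eqref{J} and \eqref{Jh} separately; both reduce to energy-type estimates for the linearized equation \eqref{Jh1}, closed by the exponential moment bounds of Proposition \ref{ME}.

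\textbf{Estimate \eqref{J}.} Write $\mathcal{J}_{s,t}\mathfrak{h}=(\mathfrak{h}^u_t,\mathfrak{h}^x_t,\mathfrak{h}^v_t)$. The velocity component solves the linearized Navier--Stokes equation
\[
\partial_t\mathfrak{h}^u=\nu\Delta\mathfrak{h}^u-\mathfrak{h}^u\cdot\nabla u_t-u_t\cdot\nabla\mathfrak{h}^u,
\]
which is decoupled from the manifold components. For the $H^5$ energy estimate I would proceed as in the proof of \eqref{Rh}, using the commutator bound \cite[Lemma A.8]{NS}, but now without projecting onto high modes. This gives
\[
\partial_t\|\mathfrak{h}^u_t\|^2_{H^5}\le C\|w_t\|_{H^5}^{12}\,\|\mathfrak{h}^u_t\|_{H^5}^2,
\]
where $w_t$ is the vorticity. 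The manifold components solve a linear ODE driven by $\mathfrak{h}^u$. For example,
\[
\partial_t\mathfrak{h}^x=Du_t(x_t)\mathfrak{h}^x+\mathfrak{h}^u(x_t),
\]
and the $v$-equation has coefficients bounded by $\|u_t\|_{C^2}\lesssim\|u_t\|_{H^5}$ together with forcing $\|\mathfrak{h}^u\|_{C^1}$. Grönwall's inequality then yields, pathwise,
\[
\|\mathcal{J}_{s,t}\|\le C\exp\Big(C\int_s^t\big(1+\|u_r\|_{H^5}^{12}\big)\,dr\Big).
\]
The \textbf{main obstacle} is taking the $q$-th moment: a polynomial power of $\|u\|$ inside an exponential is not directly controlled by \eqref{NS-2.5}, whose exponent carries the power $1/(n+2)$. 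The standard remedy is the interpolation trick of \cite[Lemma A.8]{NS} and \cite{JEMS}. One absorbs the high derivatives into $\frac{\nu}{2}\|\mathfrak{h}^u\|^2_{H^{6}}$ and is left with a coefficient of the form $C\big(\|u\|_{H^{6}}^{a}+\ldots\big)$, whose time integral is bounded by $\eta\int\|u\|^2_{H^{n+2}}\,dr+C_\eta(t-s)$. Equivalently, one uses that $\int\|u\|_{H^{n+2}}^{2/(n+2)}$-type quantities have exponential moments of every size. Combining this with \eqref{NS-2.4}--\eqref{NS-2.5}, with $\eta$ small and $q$ absorbed by adjusting the weight, and then with the super-Lyapunov property Corollary \ref{SLP}, yields \eqref{J}. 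Local boundedness in $\mathbf{p}_0$ is automatic because $M$ is compact.

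\textbf{Estimate \eqref{Jh}.} Set $\eta_t=\Pi_h(\mathcal{J}_{0,t}\mathfrak{h})$. By Duhamel's formula,
\[
\eta_{\tau_0}=R^h_{0,\tau_0}\Pi_h\mathfrak{h}+\int_0^{\tau_0}R^h_{s,\tau_0}\,\Pi_h\big(D_l\hat{F}_h\,\Pi_l\mathcal{J}_{0,s}\mathfrak{h}\big)\,ds .
\]

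For the first term, I would use \eqref{Rh} in its $q$-moment version, proved identically via the same Grönwall argument. It decays like $e^{-(\nu N_*^2-C)\tau_0}$, which is $\le CN_*^{-q}$ once $\tau_0\ge1/2$.

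For the coupling term, I would exploit the fact that the high-mode projection of the bilinear interaction gains smallness. The term $\Pi_h B(\Pi_l\,\cdot,u)$ involves only modes $|k|>N_*$, so it is bounded by $N_*^{-1}\|u\|_{H^{6}}\,\|\Pi_l\mathcal{J}\mathfrak{h}\|$. Alternatively, one can integrate the decaying factor $e^{-\nu N_*^2(\tau_0-s)}$, which gives a factor $(\nu N_*^2)^{-1}$. Either route gives $N_*^{-1}$ per power. Applying Hölder's inequality together with \eqref{J} and \eqref{NS-2.5} bounds the $q$-th moment by $Ce^{C\tau_0}N_*^{-q}e^{\eta V(u_0)}$.

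The constant $\beta(N_*)$ can be taken to be the decay rate $\nu N_*^2-C$ appearing in \eqref{Rh}.
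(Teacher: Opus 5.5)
Your Duhamel decomposition for \eqref{Jh} is correct. The free term $R^h_{0,\tau_0}\Pi_h\mathfrak{h}$ is fine, granted a moment version of \eqref{Rh}. The coupling term is where the argument breaks.

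\textbf{First route.} You claim that $\Pi_h B(\Pi_l\,\cdot\,,u)$ carries a factor $N_*^{-1}$ because only modes $|k|>N_*$ survive. This is false: projecting a low--high product onto high modes gains nothing. Take (say with $N_*\in\mathbb{N}$) $u=\sin(x_2)\,(1,0)$ and $\mathfrak{h}^l=e_k\gamma_k$ with $k=(N_*,0)$. Then $u\cdot\nabla\mathfrak{h}^l=-\sin(x_2)\cos(N_*x_1)\,(0,1)$ lives at the high frequencies $(\pm N_*,\pm1)$. Its $H^5$ norm is of order $N_*\|\mathfrak{h}^l\|_{H^5}$, because the derivative lands on a mode of frequency $N_*$. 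So $\|D_l\hat F_h\|_{\mathcal{H}_l\to\mathcal{H}_h}$ \emph{grows} like $N_*$. In addition, the piece $\mathfrak{h}^l\cdot\nabla u$ is not even in $H^5$ when $u\in H^5$.

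\textbf{Second route.} Integrating $e^{-\nu N_*^2(\tau_0-s)}$ to gain $(\nu N_*^2)^{-1}$ can be rescued, with net factor $N_*\cdot N_*^{-2}=N_*^{-1}$. However, it needs three ingredients you have not supplied. First, an explicit $O(N_*)$ bound on the coupling; the constant in \eqref{DlFh} depends on $N_*$ in an unspecified way, so it cannot simply be quoted. Second, parabolic smoothing to absorb the derivative loss for $s$ near $\tau_0$. Third, an $N_*$-uniform moment decay bound for $R^h_{s,\tau_0}$ for each $s$ inside the integral. If you prove that decay ``by the same Gr\"onwall argument'', you get the factor $\exp\bigl(C\int\|w_r\|_{H^5}^{12}\,dr\bigr)$. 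That is exactly the exponential-moment obstacle you yourself flagged for \eqref{J}.

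\textbf{The paper's route.} The paper's proof avoids all of this. Since $\Pi_h$ acts only on the velocity component and removes the modes $|k|\le N_*$, one has $\|\Pi_h f\|_{H^5}\le N_*^{-1}\|f\|_{H^6}$. The paper applies this directly to $J_{0,\tau_0}\mathfrak{h}$. It combines this with the parabolic smoothing of the \emph{full} Jacobian into $H^{n+1}$ at the positive time $\tau_0\ge 1/2$, adapted from \cite[(5.2)--(5.3)]{NS}. The factor $N_*^{-q}$ then comes out at once, with no splitting of the dynamics, no coupling operator, and no high-mode decay. Your route, once repaired, would give a super-polynomially small free term. But it costs considerably more bookkeeping and rests on the delicate estimate \eqref{Rh}.

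\textbf{On \eqref{J}.} Here you are in line with the paper, which also just adapts \cite[(5.2)]{NS}. Note, though, that \eqref{NS-2.5} controls only exponential moments of $\bigl(\int\|u\|_{H^{n+2}}^2\bigr)^{1/(n+2)}$. So $\eta\int\|u\|^2_{H^{n+2}}\,dr$ cannot sit in an exponent for $n\ge1$. Your stated remedy must instead be organized so that the high Sobolev norms of $u$ enter polynomially rather than exponentially.
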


\begin{proof}
By making refinements to the proof of \cite[(5.2)]{NS}, we obtain the validity of \eqref{J}. At the same time, we observe that 
$$
\|\Pi_hJ_{0,\tau_0}\|_{H^n\times T_{\mathbf{p}}M\to H^n\times T_{\mathbf{p}}M}^q\le N_*^{-q}\|J_{0,\tau_0}\|_{L^2\times T_{\mathbf{p}}M\to H^{n+1}\times T_{\mathbf{p}}M}^q.
$$
Then making refinements to the proof of \cite[(5.2)-(5.3)]{NS}, we obtain the bound on $\|\Pi_hJ_{0,\tau_0}\mathfrak{h}\|_{H^n\times T_{\mathbf{p}}M}^q$.
\end{proof}
Finally, we also need estimates for the low and high-frequency derivatives of the drift term $\hat{F}$, which will be used to estimate the low-high frequency coupling terms.
\begin{lemma}
For any $q>0$, $\eta\in(0,1)$ there is $C(\mathbf{p}_0,\eta, q,N_*)>0$, locally bounded in $\mathbf{p}_0\in M$ such that
\begin{align}
&\mathbb{E} \sup\limits_{\tau_0\le t\le T_0}\|D_h\hat{F}_l\|^q_{\mathcal{H}_h\to \mathcal{H}_l\times T_{\mathbf{p}}M}\le C e^{C\widetilde{T}_0}\exp(\eta V(u_0)),\label{DhFl}\\
&\mathbb{E} \sup\limits_{\tau_0\le t\le T_0}\|D_l\hat{F}_h\|^q_{\mathcal{H}_l\times T_{\mathbf{p}}M \to \mathcal{H}_h}\le C e^{C\widetilde{T}_0}\exp(\eta V(u_0)).\label{DlFh}
\end{align}
\end{lemma}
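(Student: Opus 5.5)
The two bounds \eqref{DhFl}--\eqref{DlFh} will be reduced to pathwise estimates of the operator norms in terms of Sobolev norms of $u_t$. The moments of these norms are then controlled by Proposition \ref{ME} and the definition of $V$.

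First I write out the off-diagonal blocks of $D\hat F$. For $\mathfrak{h}^h\in\mathcal{H}_h$ we have
\[
D_h\hat F_l(u,\mathbf{x},v)\mathfrak{h}^h=\Bigl(-\Pi_l\bigl(B(u,\mathfrak{h}^h)+B(\mathfrak{h}^h,u)\bigr),\ \mathfrak{h}^h(\mathbf{x}),\ \Pi_v D\mathfrak{h}^h(\mathbf{x})v\Bigr).
\]
For $\mathfrak{h}^l=(\mathfrak{h}^{l,u},\mathfrak{h}_x,\mathfrak{h}_v)$ we have
\[
D_l\hat F_h\,\mathfrak{h}^l=-\Pi_h\bigl(B(u,\mathfrak{h}^{l,u})+B(\mathfrak{h}^{l,u},u)\bigr).
\]
The manifold components do not enter the high-mode equation, so $\mathfrak{h}_x$ and $\mathfrak{h}_v$ do not appear.

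Next I bound each piece. The point-evaluation terms are handled by Sobolev embedding $H^5\hookrightarrow C^2$ on $\mathbb{T}^2$, which gives $|\mathfrak{h}^h(\mathbf{x})|+|D\mathfrak{h}^h(\mathbf{x})|\le C\|\mathfrak{h}^h\|_{\mathcal H}$ uniformly in $(\mathbf{x},v)$. For the bilinear terms I use duality with the finitely many smooth low modes. Since $\mathcal{H}_l$ is finite-dimensional, $\|\Pi_l B(u,\mathfrak{h}^h)\|_{\mathcal H}\le C(N_*)\sup_{|k|\le N_*}|\langle B(u,\mathfrak{h}^h),e_k\gamma_k\rangle_{L^2}|$. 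After integrating by parts using incompressibility, this is bounded by $C(N_*)\|u\|_{L^2}\|\mathfrak{h}^h\|_{L^2}$.

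For $D_l\hat F_h$ I take $\mathfrak{h}^{l,u}\in\mathcal{H}_l$, whose $H^{m}$ norms are all bounded by $C(N_*)\|\mathfrak{h}^{l,u}\|_{L^2}$. The product estimate in $H^5$ then gives $\|B(u,\mathfrak{h}^{l,u})+B(\mathfrak{h}^{l,u},u)\|_{H^5}\le C(N_*)\|u\|_{H^6}\|\mathfrak{h}^{l,u}\|$. To avoid the $H^6$ norm I instead measure the output in the same norm used in the error estimate, or bound it as $C(N_*)\|u\|_{H^6}$ and absorb that into the moments. Pathwise, both operator norms are therefore at most $C(N_*)(1+\|u_t\|_{H^{n+1}})$ for a suitable $n$.

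Then I take $\sup_{t\in[\tau_0,T_0]}$ and the $q$-th moment. I use $x^q\le C_{q,\eta}\exp\bigl(\eta' x^{2/(n+2)}\bigr)$ for small $\eta'$, together with \eqref{NS-2.5}, applied on $[\tau_0,T_0]$ with $s=\tau_0\ge 1/2$. This bounds $\mathbb{E}\sup\|u_t\|_{H^{n+1}}^q$ by $Ce^{CT_0}\exp\bigl(C\eta'\|\nabla u_0\|_{L^2}^2+\eta'\|u_0\|^{2/(n+2)}_{H^{n+1}}\bigr)$. Choosing $\eta'$ small relative to $\eta\sigma$ and $\eta\sigma\alpha$, the exponent is dominated by $\eta V(u_0)$. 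Since $T_0<1$, the constant can be written as $Ce^{C\widetilde T_0}$.

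The main obstacle is the derivative loss in $D_l\hat F_h$. The output has to lie in $\mathcal{H}_h=H^5$, which costs one derivative of $u$ beyond $\mathcal{H}$, while $V$ contains only $\|u\|_{H^5}^{1/3}$. My first attempt is to use the parabolic smoothing form \eqref{NS-2.6}. Because $\tau_0\ge1/2$, it controls $\sup_{t\ge\tau_0}\|u_t\|_{H^{6}}$ in exponential moments by $Ce^{C/\tau_0}\exp(C\eta'\|\nabla u_0\|^2)$, which is again $\le C\exp(\eta V(u_0))$. This is exactly why the estimate is stated only on $[\tau_0,T_0]$.
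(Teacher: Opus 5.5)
Your proposal is correct and follows essentially the same route as the paper. Both arguments write out the off-diagonal blocks of $D\hat F$, bound the point evaluations by Sobolev embedding, bound the bilinear terms using the finite-dimensionality of $\mathcal{H}_l$, and then take moments via Proposition \ref{ME}.

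Your treatment of $D_l\hat F_h$ is in fact more careful than the paper's ``similarly''. The term $\|\Pi_h B(\mathfrak{h}^{l,u},u)\|_{\mathcal{H}_h}$ costs $\sup_t\|u_t\|_{H^6}$. Estimate \eqref{NS-2.5} controls that only through $\|u_0\|_{H^6}$, which $V$ does not dominate, whereas \eqref{NS-2.6} with $s=\tau_0\ge 1/2$ closes the estimate exactly as you describe.
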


\begin{proof}
For any $\mathfrak{h}^h\in \mathcal{H}_h$, we have that 
\begin{align*}
D_h\big(-\Pi_lB(u,u)\big)\cdot\mathfrak{h}^h &= -\Pi_l\big(B(u,\mathfrak{h}^h)+B(\mathfrak{h}^h,u)\big),\\
D_hu(x)\cdot \mathfrak{h}^h &= \mathfrak{h}^h,\\ D_h(\Pi_v Du(x)v)\cdot \mathfrak{h}^h &= \Pi_v D\mathfrak{h}^h(x)v.
\end{align*}
Note that here we take \( n = 5 \), and after projecting via \( \Pi_l \) onto the finite-dimensional subspace, all norms are equivalent. Combining with \eqref{NS-2.5}, we have 
\begin{align*}
\mathbb{E} \sup_{\tau_0\le t\le T_0}\|\Pi_l \big(D_hB(u,u)\mathfrak{h}^h)\|_{\mathcal{H}_l}&\le C e^{C\widetilde{T}_0}\exp(\eta V(u_0)) \|\mathfrak{h}^h\|_h,\\
|D_hu(x)\cdot \mathfrak{h}^h|&\le \|\mathfrak{h}^h\|_{C^0} \le C\|\mathfrak{h}^h\|_h,\\
|D\mathfrak{h}^h(x)v|&\le  C\|\mathfrak{h}^h\|_h.
\end{align*}
We have thereby established \eqref{DhFl}. Similarly, \eqref{DlFh} can be proved.
\end{proof}

  
  \section*{Declarations}
  
  \noindent{Availability of data:} No new data were generated or analysed in support of this search.
  
  \noindent{Conflict of interests:} 
  The authors declare that there are no conflict of interests, we do not have any possible conflicts of interest.
  
  \noindent{Funding:} The manuscript is supported by National Natural Science Foundation of China (No. 12571189).


\end{document}